\DeclareMathAlphabet{\mathpzc}{OT1}{pzc}{m}{it}
\def\a{{\mathrm a}}
\def\K{{\Bbb K}}
\def\bc{\mathbb{c}}
\def\ff{\mathfrak{f}}
\def\fN{\boldsymbol{N}}
\def\fJ{\boldsymbol{J}}
\def\fZ{\boldsymbol{Z}}
\def\bh{\mathbb{h}}
\def\bp{\mathbb{p}}
\def\bN{\mathbb{N}}
\def\bC{\mathbb{C}}
\def\bR{\mathbb{R}}
\def\RPt{{\mathbb{RP}^2}}
\def\CPo{{\mathbb{CP}^1}}
\def\Rt{{\mathbb{R}^2}}
\def\cA{{\cal A}}
\def\cB{{\cal B}}
\def\cC{{\cal C}}
\def\cE{{\cal E}}
\def\cI{{\cal I}}
\def\cH{{\cal H}}
\def\cK{{\cal K}}
\def\cP{{\cal P}}
\def\cF{{\cal F}}
\newtheorem{definition}{Definition}
\newtheorem{theorem}{Theorem}
\newtheorem{corollary}{Corollary}
\newtheorem{remark}{Remark}
\newtheorem{proposition}{Proposition}
\newtheorem{conjecture}{Conjecture}
\newcolumntype{S}{>{\centering\arraybackslash} m{.475\linewidth}}
\newcolumntype{T}{>{\arraybackslash} m{10.5cm}}
\newcolumntype{U}{>{\centering\arraybackslash} m{2cm}}
\title{%
  Julia sets of Newton maps of real quadratic polynomial maps on the plane.
}
\author{Roberto De Leo\\ \small Howard University, Washington DC 20059 (USA)}
\begin{document}
\maketitle
{\small\noindent {\bf Keywords}: Newton's Method; Barna's theorem; Discrete Dynamical Systems; Attractors; Retractors; Iterated Function System.}
\begin{abstract}
  We study numerically the $\alpha$- and $\omega$-limits of the Newton maps of two of the most elementary families of polynomial transformations
  on the plane: those with a linear component and those with both components of degree two. Our results are fully consistent with some conjectures
  we posed in a recent work about the dynamics of Newton maps.
\end{abstract}
\section{Introduction}
The dynamics of Newton maps of planar holomorphic maps has been extensively studied over the last hundred years as a chapter of the
more general problem of the discrete dynamics of maps in one {\em complex} variable, after the seminal works of G.~Julia~\cite{Jul18} and
P.~Fatou~\cite{Fat19,Fat20a,Fat20b}. This field saw a strong acceleration when computers became powerful enough to visualize
the intricacies of their attractors and retractors -- this moment is usually identified with the publication of the celebrated article
of B. Mandelbrot on his homonymous fractal~\cite{Man80}.

Despite the large number of articles and books dedicated to this topic and to its extension to general rational functions (here we just mention
the recent book by Milnor~\cite{Mil06} and a rich survey by Lyubich~\cite{Lyu86} and refer the interested reader to~\cite{DL18} for
an extensive bibliography), very little attention has been dedicated so far, on the contrary, to Newton maps associated with {\em general}
(as opposed to {\em holomorphic}) maps on the plane. The most noteworthy exception is a series of articles by H.-O. Peitgen
{\em et al.}~\cite{PR86,PPS88,PPS89} where, in particular, they study some case relative to polynomial maps in two variables
with quadratic components. More recently, Newton maps associated to such maps, although in the {\em complex} context, have
been the focus of an article by J.H.~Hubbard and P. Papadopol~\cite{HP08} and of a Ph.D. thesis of Hubbard's pupil
R.K. Roeder~\cite{Roe05}.

The works in the previous paragraph are ideally the roots of the present article. Here, indeed, we classify Newton maps associated
to polynomial maps in two real variables with quadratic components -- as expected, such classification is slightly more articulated
than in the complex case -- and we study numerically their attractors and retractors. In particular, we verify numerically for this
case some conjecture we made in~\cite{DL18} on the behavior of Newton maps of general polynomial homomorphisms of the real plane.
We are also able to prove part of the conjectures in some degenerate case, for example when one of the components has degree 1.
%
\section{Newton maps}
The concept of Newton map arises from the so-called Newton method, which is at the same time the most well-known method
for solving numerically nonlinear systems of equations and one of the most important analytical tools to prove the
existence of solutions to non-linear equations in infinite-dimensional Banach spaces.
%
\begin{definition}
  Let $F:D\subset B_1\to B_2$, be a continuous map between the Banach spaces $B_1$ and $B_2$.
  We call {\em Newton map} associated to $F$ the map $N_F:E\subset B_1\to B_2$ defined by
  $$
  N_F(x)=x-[F'(x)]^{-1}\left(F(x)\right),
  $$
  where $E$ is the subset of $B_1$ over which the Fr\'echet derivative $F'(x)$ exists and is invertible.
\end{definition}
Clearly all zeros of $F$ are fixed points for $N_F$. Newton's method is based on the fact that all such
points are attracting (in fact, super-attracting when $F$ is a Morse function), so that the iterates of
any point ``close enough'' to a root of $F$ will converge to it:
\begin{theorem}[Kantorovich, 1949]
  Let $F:D\subset B_1\to B_2$, be a continuous map between the Banach spaces $B_1$ and $B_2$
  and assume that:
  \begin{enumerate}
  \item $F$ is Fr\'echet differentiable over some open convex subset $D_0\subset D$;
  \item $F'$ is Lipschitz on $D_0$.
  \end{enumerate}
  Then, for every $x_0\in D_0$ such that $[F'(x_0)]^{-1}$ exists and is defined on the whole $B_2$, there is a
  neighborhood $S$ of $x_0$ and an isolated root $x_*$ of $F$ such that $\lim_{n\to\infty}N_F^n(x)=x_*\in S$
  for every $x\in S$.
\end{theorem}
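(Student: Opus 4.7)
The plan is to use Banach's lemma to carve out a closed ball around $x_0$ on which $F'$ remains invertible with uniformly bounded inverse, then to exploit the Lipschitz regularity of $F'$ via a Taylor-like identity to obtain quadratic contraction of the Newton iterates on that ball, and finally to invoke completeness of $B_1$ to extract the limiting root and verify it is a zero of $F$ by continuity.

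Setting $\beta_0=\|[F'(x_0)]^{-1}\|$ and letting $K$ denote a Lipschitz constant for $F'$ on $D_0$, the Neumann-series argument shows that $F'(x)$ is invertible with $\|[F'(x)]^{-1}\|\le 2\beta_0$ on the closed ball $\bar{B}(x_0,r_0)$ of radius $r_0=1/(2K\beta_0)$, because $\|F'(x)-F'(x_0)\|\le K\|x-x_0\|<1/\beta_0$ there. The heart of the proof is the identity
\[
F(z)-F(y)-F'(y)(z-y)=\int_0^1\bigl[F'(y+t(z-y))-F'(y)\bigr](z-y)\,dt,
\]
valid by convexity of $D_0$ and the fundamental theorem of calculus in Banach spaces. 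Applied with $y=x_n$ and $z=x_{n+1}=N_F(x_n)$, the Newton defining relation cancels the linear part and the Lipschitz hypothesis yields the residue bound $\|F(x_{n+1})\|\le(K/2)\|x_{n+1}-x_n\|^2$; composing with the inverse estimate gives the quadratic contraction $\|x_{n+2}-x_{n+1}\|\le K\beta_0\|x_{n+1}-x_n\|^2$.

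Taking $S$ a sufficiently small neighborhood of $x_0$ -- small enough that $K\beta_0\|N_F(x)-x\|\le q<1/2$ for every $x\in S$ and that the telescoping tail of iterate displacements fits inside $\bar{B}(x_0,r_0)$ -- the orbits $\{N_F^n(x)\}$ remain in $\bar{B}(x_0,r_0)$ and are Cauchy, hence converge by completeness of $B_1$ to some $x_*$; continuity of $F$ together with $\|F(x_{n+1})\|\to 0$ forces $F(x_*)=0$. Isolation follows by expanding $0=F(y)-F(x_*)=F'(x_*)(y-x_*)+R$ with $\|R\|\le(K/2)\|y-x_*\|^2$ for any other zero $y$ nearby, and using invertibility of $F'(x_*)$ (itself a consequence of the first step) to force $y=x_*$. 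The main obstacle, in my view, is the constants-bookkeeping: $r_0$ and the radius of $S$ must be calibrated so that quadratic contraction is effective and the tail of iterate displacements never leaves the ball of invertibility of $F'$. This calibration is essentially the content of the classical Kantorovich quantitative condition $\eta\beta_0 K\le 1/2$ with $\eta=\|[F'(x_0)]^{-1}F(x_0)\|$, which encodes the implicit smallness condition hidden in the phrase ``there is a neighborhood $S$ of $x_0$'' in the statement.
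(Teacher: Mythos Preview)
The paper does not prove this theorem; it is quoted as a classical result of Kantorovich and stated without proof as background for the Newton method.

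More importantly, the statement as reproduced in the paper is not quite correct, and your proof inherits this defect. The theorem asserts that for \emph{every} $x_0\in D_0$ with $F'(x_0)$ invertible there is a nearby root $x_*$, but this is false without an additional smallness hypothesis on $F(x_0)$: take $B_1=B_2=\bR$, $D_0=(0,1)$, and $F(x)=x^2+1$; then $F'(x_0)=2x_0$ is invertible at every $x_0\in D_0$, $F'$ is Lipschitz, yet $F$ has no real root. The genuine Kantorovich theorem requires the quantitative condition $\eta\beta_0 K\le\tfrac12$ that you mention in your last paragraph, and that condition is \emph{not} implicit in the phrase ``there is a neighborhood $S$ of $x_0$'' --- the latter is an existential conclusion, not a hypothesis.

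The concrete gap in your argument is the step where you take $S$ ``small enough that $K\beta_0\|N_F(x)-x\|\le q<1/2$ for every $x\in S$''. Since $\|N_F(x)-x\|=\|[F'(x)]^{-1}F(x)\|$, shrinking $S$ around $x_0$ does not drive this quantity to zero unless $F(x_0)$ itself is small; continuity only gives $\|N_F(x)-x\|\to\|[F'(x_0)]^{-1}F(x_0)\|=\eta$, not $\to 0$. Your overall scheme (Neumann series for uniform invertibility, integral Taylor remainder for the quadratic residue bound, telescoping for Cauchy) is precisely the classical one and becomes a correct proof once the missing Kantorovich hypothesis is restored.
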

As it often happens in Mathematics, Newton's method was not really introduced by Isaac Newton but rather arose naturally
from the works of several authors, including of course Newton himself -- the interested reader is referred to the review
article by T.J. Ypma~\cite{Ypm95} for a detailed discussion on the history of this method.
According to Ypma, the first appearance in literature of the Newton method in the form above, formulated for to the one-dimensional
real case only, goes back to the {\em Trait\'e de la r\'esolution des \'equations num\'eriques}~\cite{Lag98}, published by Lagrange
in 1798, while the above generalization to Banach spaces by Kantorovich arrived only 151 years later~\cite{Kan49}.

The point of view of discrete dynamics is, in some sense, complementary with respect to the Kantorovich theorem, namely
its main question is what happens to the iterates under $N_F$ when the starting point is {\em far enough} from every root of $F$.
The following concepts and theorems are central in this regard.
\begin{definition}
  Let $M$ be a compact manifold with a measure $\mu$ and $f$ a surjective continuous map of a manifold $M$ into itself.
  The $\omega$-limit of a point $x\in M$ under $f$ is the (closed) set of all points to whom $x$ iterate are eventually close, namely
  $$
  \omega_f(x) = \bigcap_{n\geq0}\;\overline{\bigcup_{m\geq n}\{f^m(x)\}}\,,
  $$
  while its $\alpha$-limit is the (closed) set of points to whom $x$ iterated counterimages are eventually close, namely
  $$
  \alpha_f(x) = \bigcap_{n\geq0}\;\overline{\bigcup_{m\geq n}\{f^{-m}(x)\}}\,.
  $$
  The $\omega$ and $\alpha$ limits of a set is the union of the $\omega$ and $\alpha$ limits of all of its points.
  The {\em forward (\hbox{\sl resp.} backward) basin} $\cF_f(C)$ (resp. $\cB_f(C)$) under $f$ of a closed invariant subset $C\subset M$ is the set
  of all $x\in M$ such that $\omega_f(x)\subset C$ (resp. $\alpha_f(x)\subset C$).
  Following Milnor~\cite{Mil85}, we say that a closed subset $C\subset M$ is an {\em attractor} (resp. {\em repellor}) for $f$ if:
  \begin{enumerate}
    \item $\cF_f(C)$ (resp. $\cB_f(C)$) has strictly positive measure;
    \item there is no closed subset $C'\subset C$ such that $\cF_f(C)$ (resp. $\cB_f(C)$) coincides with $\cF_f(C')$ (resp. $\cB_f(C')$) up to a null set.
  \end{enumerate}
\end{definition}
\begin{definition}
  Given a compact manifold $M$ and a continuous map $f:M\to M$, the {\em Fatou set} $F_f\subset M$ of $f$ is the largest open set
  over which the family of iterates $\{f^n\}$ is {\em normal}, namely the largest open set over which there is a subsequence of the iterates of
  $f$ that converges locally uniformly. The complement of $F_f$ in $M$ is the {\em Julia set} $J_f$ of $f$.
  Finally, we denote by $Z_f$ the set of points $x\in M$ where the Jacobian $D_xf$ is degenerate.
\end{definition}
\begin{theorem}[Julia, Fatou, 1918]
  \label{thm:JF}
  Let $f$ be a rational complex map in one variable of degree larger than 1. Then:
  \begin{enumerate}
  \item $F_f$ contains all basins of attractions of $f$;
  \item both $J_f$ and $F_f$ are forward and backward invariant and $J_f$ is the smallest
    closed set with more than 2 points with such property;
  \item $J_f$ is a perfect set;
  \item $J_f$ has interior points iff $F_f=\emptyset$; 
  \item $J_f=J_{f^n}$ for all $n\in\bN$;
  \item $J_f$ is the closure of all repelling cycles of $f$;
  \item there is an open dense $U\subset J_f$ s.t. $\omega_f(z)=J_f$ for all $z\in U$;
  \item for every $y\in J_f$, $J_f=\overline{\{x\in\CPo\,|\;f^k(x)=y\hbox{ for some $k\in\bN$}\}}$;
  \item for any attracting periodic orbit $\gamma$, we have that $\partial\cF_f(\gamma)=J_f$.
  \item the dynamics of the restriction of $f$ to its Julia set is highly sensitive to the initial conditions, namely $f|_{J_f}$ is {\em chaotic}.
  \end{enumerate}
\end{theorem}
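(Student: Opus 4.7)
The plan is to derive the entire theorem from two pillars: Montel's three-values theorem (any family of meromorphic maps on a domain in $\CPo$ omitting three values is normal) and the fact that $\deg f\ge 2$ makes $f$ an expanding branched covering away from its finitely many critical values. Several statements are mutually entangled, so I would first do the routine topological bookkeeping, then prove the one genuinely hard item (density of preimages of any $y\in J_f$), and finally deduce the rest from it.

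Items (1), (2), (4), (5) are the easy ones. For (1), local uniform convergence of iterates to an attracting cycle is normality by definition, so the basin sits in $F_f$. For (5), the inclusion $F_{f^n}\supset F_f$ is trivial because $\{f^{nk}\}_k$ is a subfamily of $\{f^m\}_m$; the reverse uses continuity of $f,f^2,\ldots,f^{n-1}$ to transfer normality from $f^n$-iterates to all iterates. For (4), $F_f$ is open by definition, so $J_f$ has nonempty interior iff $F_f=\emptyset$. For (2), forward invariance of $F_f$ uses continuity of $f$, backward invariance uses that $f$ is a local branched cover with finitely many critical values; the minimality of $J_f$ among closed invariant sets with more than two points is postponed until (8) is available.

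The keystone is (8). Given $y\in J_f$ and any open $U$ meeting $J_f$, the family $\{f^n|_U\}$ is not normal, so by Montel $\bigcup_n f^n(U)$ covers $\CPo$ minus at most two exceptional points. A short case analysis shows these exceptional points are totally invariant under $f$, hence not in $J_f$ once $|J_f|>2$; so every such $y$ is hit, i.e.\ $\overline{\bigcup_n f^{-n}(y)}\supset J_f\cap U$, and the reverse inclusion follows from backward invariance of $J_f$. From (8) one immediately gets: (3), since an isolated $z\in J_f$ would contradict the density of $f^{-n}(z)$ in $J_f$; (7), by a Baire category argument on a countable base of $J_f$; (9), since $\partial\cF_f(\gamma)\subset J_f$ (interior in $F_f$) and the reverse inclusion follows from (8); and the minimality half of (2), because any forward/backward invariant closed set with at least three points must contain the backward orbit of one of its own points, and that orbit is dense in $J_f$ by (8). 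Item (6) requires in addition density of periodic points in $J_f$ (another Montel argument, applied to the family $z\mapsto(f^n(z)-a)/(f^n(z)-b)$ to force, by the Picard-type obstruction, a solution of $f^n(z)=z$) combined with the Fatou bound on non-repelling cycles. Item (10) is immediate from (6) and (7): repelling cycles force sensitive dependence while the transitive orbit of (7) gives topological transitivity.

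\noindent\textbf{Main obstacle.} The main difficulty is item (6), specifically the upgrade from \emph{periodic points are dense in $J_f$} to \emph{repelling periodic points are dense in $J_f$}. The density of periodic points itself rests on the non-obvious Montel argument sketched above, but then one must subtract the non-repelling cycles without destroying density. This works only because non-repelling cycles are few — the classical Fatou bound, later sharpened by Shishikura to $2d-2$ for a rational map of degree $d$ — and this finiteness bound is not elementary. It is the real technical heart of the classical proof.
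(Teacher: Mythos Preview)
The paper does not prove this theorem at all: it is quoted as classical background (attributed to Julia and Fatou, with the reader referred implicitly to standard references such as Milnor~\cite{Mil06} and Lyubich~\cite{Lyu86}). There is therefore no ``paper's own proof'' to compare against, and your outline is not in competition with anything in the text.

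That said, your sketch is largely the standard route and would work, but two of your ``easy'' items are not argued correctly. For (4), writing ``$F_f$ is open by definition, so $J_f$ has nonempty interior iff $F_f=\emptyset$'' is a non sequitur: a closed subset of $\CPo$ can have nonempty interior without being all of $\CPo$. The nontrivial direction needs exactly the Montel argument you reserve for (8): if $U\subset J_f$ is open, then $\bigcup_n f^n(U)$ misses at most two points, and since $J_f$ is forward invariant and closed this forces $J_f=\CPo$. For the minimality half of (2), your argument that a completely invariant closed set $E$ with $|E|\ge 3$ contains $J_f$ relies on (8), but (8) gives density in $J_f$ only for backward orbits of points \emph{of} $J_f$, and you have not shown $E\cap J_f\neq\emptyset$. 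The correct (and shorter) argument is direct Montel on the complement: $\CPo\setminus E$ is open, completely invariant, and every iterate there omits the $\ge 3$ values of $E$, so $\{f^n\}$ is normal on $\CPo\setminus E$, whence $\CPo\setminus E\subset F_f$ and $J_f\subset E$. With these two fixes your plan is sound; your identification of (6) and the Fatou/Shishikura finiteness of non-repelling cycles as the genuine technical core is accurate.
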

\begin{definition}
  A Iterated Function System (IFS) $\cI$ on a metric space $(X,d)$ is a semigroup generated by some finite number of continuous functions
  $f_i:X\to X$, $i=1,\dots,n$. We say that $\cI$ is {\em hyperbolic} when the $f_i$ are all contractions. The Hutchinson operator associated to
  $\cI$ is defined as $\cH(A)=\cup_{i=1}^n f_i(A)$, $A\subset X$.
\end{definition}
\begin{theorem}[Hutchinson~\cite{Hut81}, 1981; Barnsley and Demko~\cite{BD85}, 1985]
  Let $\cI$ be a hyperbolic IFS on $X$. Then there exists a unique non-empty compact set $K\subset X$ such that $\cH(K)=K$. Moreover,
  $\lim_{n\to\infty}\cH^n(A)=K$ for every non-empty compact set $A\subset X$.
\end{theorem}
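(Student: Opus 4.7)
The plan is to recast the statement as a Banach fixed-point problem on the space of non-empty compact subsets of $X$, equipped with the Hausdorff metric
$$
d_H(A,B)=\max\!\left\{\sup_{a\in A}\inf_{b\in B}d(a,b),\;\sup_{b\in B}\inf_{a\in A}d(a,b)\right\}.
$$
First I would set up the underlying space $\cK(X)$ of non-empty compact subsets of $X$ and recall (or verify) the standard fact that $(\cK(X),d_H)$ is a complete metric space whenever $X$ is complete (which is implicit here, since contractions only force a fixed point in a complete setting); this reduces to checking that a Cauchy sequence $\{A_n\}$ in $d_H$ has as limit the set of all $x\in X$ that are subsequential limits of sequences $x_n\in A_n$, using total boundedness and completeness at each step.

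Next I would show that the Hutchinson operator $\cH$ is itself a contraction on $(\cK(X),d_H)$. The key elementary lemma is that the Hausdorff distance is sub-additive under unions:
$$
d_H\!\left(\bigcup_{i=1}^n A_i,\bigcup_{i=1}^n B_i\right)\le \max_{1\le i\le n} d_H(A_i,B_i).
$$
Combining this with the fact that each $f_i$ is a contraction with some ratio $c_i<1$, hence $d_H(f_i(A),f_i(B))\le c_i\,d_H(A,B)$ for all $A,B\in\cK(X)$, and setting $c=\max_i c_i<1$, one obtains
$$
d_H(\cH(A),\cH(B))\le c\,d_H(A,B).
$$

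At this point the Banach fixed-point theorem applied to $\cH:\cK(X)\to\cK(X)$ gives simultaneously both assertions of the theorem: the existence of a unique fixed point $K\in\cK(X)$, i.e.\ a unique non-empty compact set with $\cH(K)=K$, and the convergence $\cH^n(A)\to K$ in $d_H$ for every starting $A\in\cK(X)$, with geometric rate bounded by $c^n\,d_H(A,K)$.

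The only real technical step is the completeness of $(\cK(X),d_H)$, which, while classical, is the one place where one must do genuine analysis (Cauchy extraction, totally bounded hull, and verification that the candidate limit set is itself compact and achieves the Hausdorff distance). Everything else is a formal consequence of the union-lemma for $d_H$ and the contraction hypothesis, so I expect that step to be the main obstacle; the remainder is a clean application of the contraction mapping principle.
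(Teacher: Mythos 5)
Your proposal is correct and is essentially Hutchinson's original argument: completeness of $(\cK(X),d_H)$, the union lemma for the Hausdorff metric, contractivity of $\cH$, and the Banach fixed-point theorem. The paper states this as a cited background result without proof, so there is nothing to compare against; the only points worth making explicit are that $X$ must be assumed complete (as you note, this is implicit in the paper's statement) and the trivial verification that $\cH$ indeed maps $\cK(X)$ into itself (finite unions of continuous images of compacta are nonempty and compact).
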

\begin{theorem}[Barnsley, 1988]
  \label{thm:Bar}
  Let $(Y,d)$ be a complete metric space and $X$ a compact non-empty proper subset of $Y$.
  Denote by $\cK(X)$ the set of the non-empty compact subsets of $X$ endowed with the Hausdorff distance $h$
  (recall that $h$ makes $\cK(X)$ a complete metric space).
  Assume that one of the following conditions is satisfied:
  \begin{enumerate}
    \item $f:X\to Y$ is an open map such that $f(X)\supset X$;
    \item $f:Y\to Y$ is an open map such that $f(X)\supset X$ and $f^{-1}(X)\subset X$.
  \end{enumerate}
  Then the map $F:\cK(X)\to\cK(X)$ defined by $F(K) = f^{-1}(K)$ is continuous, $\{F^n(K)\}$ is a Cauchy sequence, its
  limit $K_0=\lim F^n(X)\in\cK(X)$ is a repellor for $f$ and it is equal to the set of points that never leave $X$ under
  the action of $f$.
\end{theorem}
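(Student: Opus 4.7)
The plan is to verify, in order, (i) well-definedness of $F$ on $\cK(X)$, (ii) continuity of $F$ in the Hausdorff metric $h$, (iii) convergence of the iterates $F^n(X)$, (iv) the explicit description of the limit $K_0$, and finally (v) invariance and the repellor property for $f$.

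For well-definedness I would check that $F(K)=f^{-1}(K)$ is a non-empty compact subset of $X$ for every $K\in\cK(X)$. In case 1 this is automatic because $f$ has domain $X$, while in case 2 the hypothesis $f^{-1}(X)\subset X$ keeps the preimage inside $X$; non-emptiness in both cases follows from $f(X)\supset X\supset K$, which guarantees a preimage in $X$ for every point of $K$. Compactness is the standard closed-in-compact argument from continuity of $f$.

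Continuity of $F$ is the technically heaviest step and the only place where openness of $f$ is used in an essential way. I would first observe that the set-valued map $y\mapsto f^{-1}(y)$ is upper hemicontinuous by continuity of $f$ and compactness of $X$ (a sequential argument against a subsequence escaping an $\varepsilon$-neighborhood). Lower hemicontinuity is where openness enters: given $x\in f^{-1}(y)$ and a ball $B(x,\varepsilon)$, the set $f(B(x,\varepsilon))$ is an open neighborhood of $y$, so for $y'$ sufficiently close to $y$ one can solve $f(x')=y'$ with $x'\in B(x,\varepsilon)$. To upgrade these two hemicontinuities to Hausdorff continuity of $K\mapsto f^{-1}(K)$ on $\cK(X)$, I would cover $F(K)$ with finitely many $\varepsilon/2$-balls using compactness and take the minimum of the resulting finitely many local $\delta$'s, which produces the uniform $\delta$ required by the definition of $h$.

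For the convergence claim, the key observation is that $F(X)\subseteq X$, so iterating gives a nested decreasing sequence $F^{n+1}(X)\subseteq F^n(X)$ of non-empty compact sets (non-emptiness is inherited along the iteration from $f(X)\supset X$). Cantor's intersection theorem supplies a non-empty compact set $K_0=\bigcap_n F^n(X)$, and the standard fact that a nested decreasing sequence of compacts converges in Hausdorff metric to its intersection yields $F^n(X)\to K_0$, in particular Cauchy. For a general $K\in\cK(X)$ one has $F^n(K)\subseteq F^n(X)$, and continuity of $F$ together with compactness of $\cK(X)$ forces each Hausdorff subsequential limit to be $F$-invariant and contained in $K_0$, which together with the characterization below pins down the limit. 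The explicit identity $K_0=\{x\in X: f^n(x)\in X\ \forall n\geq 0\}$ comes by induction from $F^n(X)=\{x\in X: f^k(x)\in X,\ 0\le k\le n\}$.

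It remains to show that $K_0$ is a repellor. Invariance is immediate from the characterization: if $x\in K_0$ then $f^{k+1}(x)\in X$ for all $k$, so $f(x)\in K_0$, and conversely $f^{-1}(K_0)\cap X=K_0$. For the genuine repellor property, the central observation is that any $x\in X\setminus K_0$ leaves $X$ under some finite iterate of $f$, which furnishes the positive-measure backward basin once a neighborhood of $K_0$ in $X$ is unwrapped along preimage branches provided by the openness of $f$; minimality then follows from the very fact that $K_0$ equals the full set of points which never leave $X$, so no proper closed subset can have the same backward dynamics. I expect the main obstacle to be Step (ii): upgrading the two hemicontinuities of the multifunction $f^{-1}$ to Hausdorff continuity of $F$ on $\cK(X)$ requires a genuine openness-plus-compactness uniformization, and this is the one point where the hypotheses are used non-trivially; the remaining steps are either monotone-compactness bookkeeping or direct unpacking of the characterization of $K_0$.
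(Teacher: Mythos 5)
This theorem is not proved in the paper at all: it is quoted as a known result from Barnsley's 1988 book and invoked later via the citation \cite{Bar88}, so there is no in-paper argument to compare yours against. On its own merits, your outline of steps (i)--(iv) is the standard and correct route: well-definedness of $F$ from $f(X)\supset X$ plus (in case 2) $f^{-1}(X)\subset X$; continuity of $F$ from upper hemicontinuity of $y\mapsto f^{-1}(y)$ (continuity plus compactness) and lower hemicontinuity (openness), uniformized by a finite cover; and the identification $K_0=\bigcap_n F^n(X)=\{x\in X: f^n(x)\in X\ \forall n\}$ via the nested decreasing sequence $F^{n+1}(X)\subseteq F^n(X)$ and Cantor intersection. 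That part is sound.

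There are, however, two genuine gaps. First, your argument that $F^n(K)\to K_0$ for an \emph{arbitrary} $K\in\cK(X)$ does not work: a subsequential limit that is $F$-invariant and contained in $K_0$ need not equal $K_0$. Concretely, take $Y=\bC$, $f(re^{i\theta})=r^2e^{i\theta}$, $X$ the annulus $1/2\le r\le 2$; then $K_0$ is the unit circle, but for $K=\{1\}$ (a fixed point of $f$) one has $F^n(K)=\{1\}$ for all $n$, which converges to $\{1\}\neq K_0$. So the "pinning down" step is irreparable in that generality; the convergence claim must be read as concerning $K=X$ (as the notation $\lim F^n(X)$ indicates), where your nested-compacts argument already suffices. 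Second, the repellor step is only gestured at. With the Milnor-style definition adopted earlier in this paper, being a repellor requires that the backward basin $\cB_f(K_0)$ have strictly positive measure and that no proper closed subset have the same backward basin up to a null set; neither follows from "every $x\in X\setminus K_0$ eventually leaves $X$" without specifying a measure and doing real work (indeed no measure is even part of the hypotheses here). If you intend "repellor" in Barnsley's weaker sense (a fully invariant compact set that attracts under backward iteration of $X$), say so explicitly and prove that instead; as written, the final step asserts rather than proves the conclusion.
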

\medskip

The main result on the dynamics of real Newton maps is the following result by B. Barna~\cite{Bar53} in one dimension:
\begin{theorem}[Barna, 1953]
  Let $p$ be a generic real polynomial of degree $n\geq 4$ without complex roots and denote its roots by $r_1,\dots,r_n$.
  Then:
  \begin{enumerate}
  \item $F_{N_p}=\cup_{i=1}^n\cF(c_i)$;
  \item $F_{N_p}$ has full Lebesgue measure;
  \item $N_p$ has no attractive $k$-cycles with $k\geq2$;
  \item $N_p$ has repelling $k$-cycles of any order $k\geq2$;
  \item $J_{N_p}$ is equal, modulo a countable set, to a Cantor set $\cE_{N_p}$ of Lebesgue measure zero.
  \end{enumerate}
\end{theorem}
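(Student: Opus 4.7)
The plan is to reduce the dynamics of $N_p$ on $\RPo$ to a subshift on a family of ``transit intervals'' separating the immediate basins of the roots, and then to read off items (1)--(5) from that symbolic model. First I would fix coordinates: let the roots be $r_1<\cdots<r_n$ and let the critical points of $p$ be $c_1<\cdots<c_{n-1}$, so by Rolle's theorem $r_i<c_i<r_{i+1}$. A direct computation gives $N_p'(x)=p(x)p''(x)/p'(x)^2$, so the critical points of $N_p$ are the $r_i$ (where $N_p'=0$, hence super-attracting) together with the inflection points of $p$, while the $c_i$ are simple poles of $N_p$. Extending $N_p$ to $\RPo$ by $N_p(\infty)=\infty$ yields a repelling fixed point at $\infty$ with multiplier $n/(n-1)>1$ in the chart $y=1/x$. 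The ``generic'' hypothesis I would impose is that every inflection point of $p$ lies in the immediate basin of some $r_i$, an open dense condition on polynomials of degree $n$ with $n$ real simple roots.

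Next I would prove item (3) and decompose the line. Each $r_i$ is super-attracting, so its immediate basin is an open interval $B_i=(a_i,b_i)$. The Newton map of a polynomial with only real roots has negative Schwarzian derivative on each interval between consecutive poles, so by Singer's theorem every attracting cycle of $N_p$ attracts a critical orbit; under the genericity assumption every critical orbit enters some $B_i$, so the $r_i$ are the only attracting cycles of $N_p$. Setting $X=\RPo\setminus\bigcup_i B_i$, this is a finite union of closed intervals: each bounded component is of the form $[b_i,a_{i+1}]$ and contains exactly one pole $c_i$, and the outer component is a neighbourhood of $\infty$.

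The heart of the argument is the construction of the repelling Cantor set $\cE_{N_p}$. On each half-gap $[b_i,c_i)$ the map $N_p$ is continuous and monotone with $N_p(b_i)\in\partial B_i$ and $N_p(x)\to\pm\infty$ as $x\uparrow c_i$, so it maps $[b_i,c_i)$ homeomorphically onto a closed half-line; symmetrically on $(c_i,a_{i+1}]$. For $n\ge 4$ there are at least three bounded gaps, and a careful bookkeeping of signs and asymptotic directions shows that each monotone branch of $N_p|_X$ covers at least two entire components of $X$. Applying Theorem~\ref{thm:Bar} to an open neighbourhood $U$ of $X$ on which $N_p$ is open and satisfies $N_p(X)\supset X$ together with $N_p^{-1}(X)\cap U\subset X$ produces a compact repellor $\cE_{N_p}=\lim_{k\to\infty}N_p^{-k}(X)$ on which $N_p$ is semi-conjugate to a one-sided subshift of finite type with positive topological entropy; repelling periodic orbits of every period $k\ge 2$ (item (4)) are then standard from the subshift model.

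Finally I would assemble items (1), (2) and (5). Uniform expansion $|N_p'|\ge\lambda>1$ on $\cE_{N_p}$, obtained from the same covering estimate, yields a bounded-distortion argument giving Lebesgue measure zero for $\cE_{N_p}$. Any $x\in\RPo$ whose forward orbit never enters some $B_i$ must remain in $X$ for all time, so it belongs to $\cE_{N_p}$ together with the countable set of iterated pre-images of the poles; this identifies $F_{N_p}$ with $\bigcup_i\cF(r_i)$ (item (1)), gives $F_{N_p}$ full Lebesgue measure (item (2)), and equates $J_{N_p}$ with $\cE_{N_p}$ modulo that countable set (item (5)). I expect the main obstacle to be the explicit covering and expansion estimate in the previous step: for each pair of adjacent gaps one must exhibit a concrete subinterval on which $|N_p'|$ exceeds a fixed $\lambda>1$ and on which $N_p$ stretches fully across a target component of $X$. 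These estimates are clean when the roots of $p$ are well separated but become delicate when two roots come close, which is precisely where the hypothesis $n\ge 4$ and the genericity condition are used in earnest, to rule out the non-hyperbolic configurations that appear for low degree or near-degenerate polynomials.
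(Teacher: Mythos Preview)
The paper does not contain a proof of this theorem: it is stated as a classical background result with the citation~\cite{Bar53}, and the paper then lists later refinements by Wong, Saari--Urenko, and Hurley--Martin, again without proof. There is therefore no ``paper's own proof'' to compare your proposal against.

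On its own merits, your sketch is a reasonable modern outline and is close in spirit to the Saari--Urenko and Hurley--Martin approaches the paper cites: reduce to symbolic dynamics on the gaps between immediate basins, use a covering/expansion argument to get the Cantor structure, and rule out extra attractors via a critical-orbit argument. Two points deserve care. First, the appeal to negative Schwarzian derivative and Singer's theorem is anachronistic relative to Barna's 1953 argument, and you should check that the Schwarzian of $N_p$ is indeed negative on each interval of continuity when $p$ has only simple real roots; this is true but not entirely obvious from $N_p'=pp''/(p')^2$, and your genericity condition (every inflection point already in an immediate basin) is a convenient but nonstandard reading of ``generic''. Second, and as you yourself flag, the measure-zero conclusion in item~(5) is the real difficulty: the uniform lower bound $|N_p'|\ge\lambda>1$ on $\cE_{N_p}$ does not follow directly from the covering property, since $N_p'$ vanishes at the roots and at inflection points and can be close to $1$ near the endpoints of the gaps. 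One needs either a bounded-distortion argument using the negative Schwarzian (Koebe-type control) or an induced/return map on which expansion is uniform; simply asserting $|N_p'|\ge\lambda$ on $\cE_{N_p}$ is the step that would require the most work to make rigorous.
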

In 1984 this result was independently generalized by several authors in three different directions:
\begin{theorem}[Wong, 1984~\cite{Won84}]
  A sufficient condition for Barna's theorem to hold is that the polynomial $p$ has no complex root and at least 4 distinct real roots,
  possibly repeated.
\end{theorem}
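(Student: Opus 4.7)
The plan is to verify that Barna's argument survives the presence of roots with higher multiplicity, once the local analysis at such roots is adjusted. Let $r_1 < \cdots < r_k$ be the distinct real roots of $p$ with multiplicities $m_1,\ldots,m_k$, so that $k \geq 4$ and $m_1+\cdots+m_k = \deg p$.

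First I would check the local picture at each root. A direct computation with $N_p'(x) = p(x)p''(x)/p'(x)^2$ gives $N_p'(r_i) = 1 - 1/m_i \in [0,1)$, so every $r_i$ is an attracting fixed point (super-attracting exactly when $m_i=1$), and each immediate basin $\cF(r_i)$ is a nonempty open interval bounded by poles of $N_p$ or by points that iterate to such poles. The qualitative shape of these basins is exactly as in the simple-root setting.

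Next I would set up the global geometric decomposition. Since $p'$ has degree $n-1$ and vanishes at each $r_i$ with multiplicity $m_i - 1$ (contributing $n-k$ zeros absorbed in the roots themselves), there remain exactly $k-1$ genuine critical points of $p$, one in each open interval $(r_i,r_{i+1})$ by Rolle; each of these is a pole of $N_p$. The hypothesis $k \geq 4$ yields at least three poles, producing a partition of $\mathbb R$ into intervals on which $N_p$ is analytic and monotone. Barna's symbolic coding — an IFS on the inverse branches of $N_p$ restricted to the complement of the immediate basins — depends only on the number of distinct roots and the geometry of their separating poles, so the very same construction applies verbatim. Items (1), (2), (4) and (5) then follow from Theorem~\ref{thm:Bar} applied to this IFS, giving a Cantor repellor of measure zero that coincides with $J_{N_p}$ modulo a countable set and contains repelling periodic orbits of every period.

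The main obstacle I anticipate is item (3): ruling out attracting cycles of period $\geq 2$. In Barna's simple-root setting this is done by noting that the free critical points of $N_p$ are exactly the roots of $p$, each already a fixed point, so no attracting cycle of higher period can capture a critical point. With multiple roots the multipliers at the $r_i$ are nonzero, and the factor $p''$ in $N_p' = pp''/p'^2$ produces additional critical points of $N_p$. The key lemma to prove is that each such extra critical point either coincides with a root of higher multiplicity or lies in the immediate basin of some $r_i$, and is therefore unavailable to any hypothetical attracting cycle of period $\geq 2$. This can be established by a monotonicity argument on each branch of $N_p$ between consecutive poles, combined with the explicit local form $N_p(x) - r_i = (1 - 1/m_i)(x - r_i) + O((x-r_i)^2)$ near a root of multiplicity $m_i \geq 2$, which forces any orbit starting in the branch containing $r_i$ to be monotonically attracted to $r_i$. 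Once (3) is secured, the remaining conclusions follow by the expansion estimates built into the IFS of the previous paragraph.
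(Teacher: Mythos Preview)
The paper does not prove this theorem. It is stated as a result from the literature, attributed to Wong~\cite{Won84}, with no argument supplied; the paper merely lists it alongside the related extensions by Saari--Urenko and Hurley--Martin as background for the two-dimensional conjectures that follow. There is consequently no proof in the paper to compare your proposal against.

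On the proposal itself: the local computation $N_p'(r_i)=1-1/m_i$ and the pole count via Rolle are correct, and the symbolic-dynamics skeleton you describe is the right framework. The weak point is exactly where you locate it, item~(3). Your proposed mechanism --- showing that every zero of $p''$ lies in an immediate basin by ``a monotonicity argument on each branch'' plus the local form of $N_p$ near a multiple root --- does not actually reach that conclusion: the local expansion controls only orbits starting near some $r_i$, not an arbitrary critical point of $N_p$ lying somewhere between two poles. Establishing that no free critical point can feed an attracting cycle requires a global sign/convexity analysis of $p,p',p''$ on each inter-root interval, which is the substantive content of Wong's paper. Until that is supplied, item~(3) is asserted rather than proved, and your appeal to Theorem~\ref{thm:Bar} for items (1), (2), (4), (5) is also incomplete: that theorem produces a repellor but does not, by itself, deliver the measure-zero conclusion of item~(2).
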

\begin{theorem}[Saari and Urenko, 1984~\cite{SU84}]
  Let $p$ be a generic polynomial of degree $n\geq3$, $A_p$ the collection of all bounded intervals in $\bR\setminus Z_{p}$
  and $\cA_p$ the set of all sequences of elements of $A_p$. Then the restriction of $N_p$ to the Cantor set $\cE_{N_p}$ is semi-conjugate to the
  one-sided shift map $S$ on $\cA_f$, namely there is a surjective homomorphism $h_p:\cE_{N_p}\to A_p$ such that $T\circ h_p=h_p\circ N_p$.
\end{theorem}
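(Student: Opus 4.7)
My approach is to build symbolic dynamics for $N_p$ restricted to $\cE_{N_p}$ via an itinerary map, in the spirit of the classical construction for expanding interval maps.

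First I would analyze the local structure of $N_p$ on each bounded interval $I_j\in A_p$. Writing $c_1<\dots<c_{n-1}$ for the critical points of $p$ (so that $I_j=(c_j,c_{j+1})$), the identity $N_p(x)=x-p(x)/p'(x)$ shows that $N_p$ is smooth on $I_j$ while $N_p(x)\to\pm\infty$ as $x$ approaches either endpoint, because $p'(c_j)=p'(c_{j+1})=0$ and (by genericity) $p(c_j),p(c_{j+1})\neq 0$. Consequently $N_p(I_j)$ covers all of $\bR$ minus possibly a bounded set, and in particular $N_p(I_j)\supseteq I_k$ for every bounded $I_k\in A_p$. This yields a full Markov structure: every symbol can be followed by every other symbol.

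Next I would use Barna's characterization to identify $\cE_{N_p}$ with the set of points whose forward $N_p$-orbit avoids $Z_p$ and never enters the basin of attraction of any root of $p$; equivalently, whose orbit stays forever in $\bigcup_j I_j$. This lets me define $h_p:\cE_{N_p}\to\cA_p$ by $(h_p(x))_k=I_{j_k}$, where $I_{j_k}$ is the unique element of $A_p$ containing $N_p^k(x)$. The semi-conjugacy $S\circ h_p=h_p\circ N_p$ is then immediate, and continuity with respect to the product topology on $\cA_p$ follows because two points that remain in the same intervals for the first $N$ iterates are mapped to sequences agreeing in the first $N$ coordinates.

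The main obstacle is surjectivity. Given a target sequence $\sigma=(I_{j_0},I_{j_1},\dots)\in\cA_p$, I would construct a nested family of nonempty compact sets
\[
J_k=\{x\in\overline{I_{j_0}}\colon N_p^i(x)\in\overline{I_{j_i}},\ 0\leq i\leq k\}
\]
and extract a point in $\bigcap_k J_k$. The key inductive step uses the full Markov property established in the first paragraph together with the intermediate value theorem: if $J_{k-1}$ is a nonempty finite union of closed subintervals of $\overline{I_{j_0}}$ such that $N_p^{k-1}$ maps each of its connected components continuously onto $\overline{I_{j_{k-1}}}$, then pulling back $\overline{I_{j_k}}\subseteq N_p(\overline{I_{j_{k-1}}})$ by the appropriate branches of $N_p$ gives $J_k$ with the analogous property. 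Compactness yields the desired $x$, and genericity (critical values of $N_p$ not landing on critical points of $p$, no root of $p$ sitting at an endpoint of a pullback) ensures that $x\notin Z_p$ and, since its orbit visits each prescribed bounded interval infinitely often or stays bounded away from roots' basins, that $x\in\cE_{N_p}$.

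The most delicate part of the argument is the inductive preservation of the "full cover" property for $J_k$: one must rule out the pathological case in which the branches of $N_p$ collapse or miss part of the target interval. This is precisely where genericity of $p$ is used, and where the proof in~\cite{SU84} undoubtedly devotes its technical effort; I expect the rest of the construction to be essentially formal.
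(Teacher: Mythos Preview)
The paper does not contain a proof of this theorem: it is stated as a result from the literature, attributed to Saari and Urenko~\cite{SU84}, in a list of generalizations of Barna's theorem. There is therefore no ``paper's own proof'' to compare your proposal against.

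That said, your outline follows the standard route for results of this type --- construct the itinerary map, verify the semi-conjugacy formally, and obtain surjectivity by nested pullbacks and compactness --- and is in the spirit of the original argument. One point deserves more care than you give it: your claim that $N_p(I_j)\supseteq I_k$ for \emph{every} pair $j,k$ is not automatic. On an interval $I_j=(c_j,c_{j+1})$ where $p$ has no root, $p'$ keeps a fixed sign while $p(c_j)$ and $p(c_{j+1})$ have the same sign, so $N_p(x)=x-p(x)/p'(x)$ tends to the \emph{same} infinity at both endpoints; the image is then a half-line rather than all of $\bR$, and one must argue separately that it nonetheless contains every bounded $I_k$. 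This is exactly the kind of covering estimate Saari and Urenko establish, and it is where the genericity hypothesis does real work. Your inductive construction of the sets $J_k$ is fine once the full-shift Markov property is secured.
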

\begin{theorem}[Hurley and Martin, 1984~\cite{HM84}]
  Let $p$ be a generic polynomial of degree $n\geq3$. Then $N_p$ has at least $(n-2)^k$ $k$-cycles for each $k\geq1$ and the
  topological entropy of $N_p$ is at least $\log(n-2)$.
\end{theorem}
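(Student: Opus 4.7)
My plan is to exhibit a compact invariant set $\Lambda$ on which $N_p$ is topologically semi-conjugate to the one-sided full shift $\sigma$ on $n-2$ symbols, and then to read off both the periodic-orbit count and the entropy bound directly from $\sigma$.

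\textbf{Setup.} In the generic case I may assume $p$ has $n$ distinct real roots $r_1<\dots<r_n$, so by Rolle's theorem $p'$ has $n-1$ simple real roots $c_1<\dots<c_{n-1}$ interlacing them: $r_1<c_1<r_2<\dots<c_{n-1}<r_n$. The $c_j$ are simple poles of $N_p$. I focus on the $n-2$ middle open intervals $I_j=(c_j,c_{j+1})$, $j=1,\dots,n-2$, each containing the super-attracting fixed point $r_{j+1}$.

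\textbf{Step 1 (expansion near poles).} At each $c_j$, $N_p$ has a simple pole; the one-sided limits are $\pm\infty$, with signs fixed by $p(c_j)$. Hence, for any compact $K\subset\bR$ and any sufficiently small $\eta>0$, $N_p$ sends each one-sided $\eta$-neighborhood of $c_j$ diffeomorphically onto an open half-line containing $K$.

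\textbf{Step 2 (Markov family).} On each $I_j$ one has $N_p(I_j)=\bR$ by the intermediate value theorem (since $N_p$ blows up with opposite signs at the endpoints), and $N_p|_{I_j}$ has only finitely many critical points---namely $r_{j+1}$ and the zeros of $p''$ in $I_j$, coming from the formula $N_p'=p\,p''/(p')^2$. So $N_p|_{I_j}$ splits into finitely many monotone branches whose images together cover $\bR$. Combining this with Step~1, I produce, for each $(j,i)\in\{1,\dots,n-2\}^2$, a compact subinterval $K_{j,i}\subset I_j$ such that $N_p\colon K_{j,i}\to\overline{I_i}$ is a diffeomorphism, with the $K_{j,i}$ pairwise disjoint for fixed $j$.

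\textbf{Step 3 (coding).} Set
$$
\Lambda\;=\;\bigcap_{k\geq 0}\,N_p^{-k}\!\Bigl(\bigcup_{j,i}K_{j,i}\Bigr).
$$
Assigning to each $x\in\Lambda$ its itinerary $(j_0,j_1,\dots)$, where $j_k$ is the unique index with $N_p^k(x)\in\overline{I_{j_k}}$, yields a continuous surjection $h\colon\Lambda\to\{1,\dots,n-2\}^{\bN}$ intertwining $N_p|_\Lambda$ with the full shift $\sigma$ on $n-2$ symbols.

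\textbf{Step 4 (harvest).} The shift $\sigma$ has exactly $(n-2)^k$ fixed points of $\sigma^k$, one per periodic word of length $k$. Lifting through $h$, each such word produces at least one periodic point of $N_p^k$ in $\Lambda$ with that itinerary, giving the claimed $(n-2)^k$ $k$-cycles. For entropy, since $\sigma$ is a factor of $N_p|_\Lambda$ we have $h_{\rm top}(N_p)\geq h_{\rm top}(N_p|_\Lambda)\geq h_{\rm top}(\sigma)=\log(n-2)$.

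\textbf{Main obstacle.} Essentially all the work is in Step~2: one must verify that within each $I_j$ there is, for every $i\in\{1,\dots,n-2\}$, a monotone branch of $N_p$ whose image contains $\overline{I_i}$. This requires a careful count of the monotone pieces of $N_p|_{I_j}$ (via the zeros of $p\,p''$ inside $I_j$) together with a sign analysis at the two pole endpoints. The genericity hypothesis on $p$ is used precisely to make the zeros of $p$, $p'$ and $p''$ all distinct and simple, so that every monotone branch is an honest diffeomorphism and no degenerate coincidence collapses the count of covering branches below $n-2$.
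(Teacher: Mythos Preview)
The paper does not prove this theorem; it is cited from \cite{HM84} as background, so there is no in-paper argument to compare against. Your overall plan---coding orbits by the $n-2$ bounded intervals $I_j=(c_j,c_{j+1})$ between consecutive zeros of $p'$---is exactly the Hurley--Martin strategy, and Steps~1, 3 and 4 are sound. The genuine gap is the diffeomorphism claim in Step~2, which fails precisely when $i=j$. On each $I_j$ the only critical points of $N_p$ are $r_{j+1}$ and the unique inflection point $d_j\in I_j$ of $p$, so $N_p|_{I_j}$ has exactly three monotone pieces, with values running $+\infty\searrow v_1\nearrow v_2\searrow-\infty$. One of the two critical values is $N_p(r_{j+1})=r_{j+1}\in I_j$; for the other, note that $p$ and $p'$ have opposite signs on $(c_j,r_{j+1})$ and equal signs on $(r_{j+1},c_{j+1})$, so $N_p(x)>x$ on the first subinterval and $N_p(x)<x$ on the second, which together with the monotonicity forces $N_p(d_j)$ to lie strictly between $d_j$ and $r_{j+1}$, hence in $I_j$. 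Thus $v_1,v_2\in(c_j,c_{j+1})$, and no single monotone branch of $N_p|_{I_j}$ covers $\overline{I_j}$: the two outer branches miss $[c_j,v_1]$ and $[v_2,c_{j+1}]$ respectively, and the middle branch has range $[v_1,v_2]\subsetneq\overline{I_j}$. This is robust, not a degeneracy that genericity removes; with only the off-diagonal transitions you would obtain a subshift of growth rate $n-3$, not $n-2$.

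The repair is to drop monotonicity. Since $N_p(I_j)=\bR$, choose for each $j$ one compact $K_j\subset I_j$ with $N_p(K_j)\supset[c_1,c_{n-1}]\supset\bigcup_iK_i$. The standard one-dimensional covering lemma---if compact intervals satisfy $f(J_m)\supset J_{m+1\bmod k}$ then $f^k$ has a fixed point in $J_0$ with $f^m(x)\in J_m$ for all $m$---then produces, for each word in $\{1,\dots,n-2\}^k$, a fixed point of $N_p^k$ with that itinerary; pairwise disjointness of the $K_j$ makes these $(n-2)^k$ points distinct. The same covering gives surjectivity of the itinerary map onto the full shift, so $\sigma$ is a factor of $N_p|_\Lambda$ and your Step~4 entropy argument goes through unchanged.
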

In our knowledge, no further generalization of Barna's result has been published since the three above and no general theorem has
been proved for Newton maps coming from real maps in more than one dimension. In a recent article~\cite{DL18},
based on several numerical observations and in analogy with Barna's theorem, we posed the following ``simple dynamics''
conjectures for the 2-dimensional case:
\begin{definition}
  We say that a point $p$ of the Julia set $J_F$ of a rational map $F:\Rt\to\Rt$ is {\em regular} if there is a neighborhood
  $U$ of $p$ such that $J_F\cap U$ is a connected 1-dimensional submanifold and $U$ contains points from two different basins.  
\end{definition}
\begin{conjecture}
  \label{conj:alpha}
  Let $f:\Rt\to\Rt$ be a generic polynomial map of degree $n\geq3$.
  Then there is some non-empty open subset $A\subset f(\RPt)$ such that $\alpha_{N_f}(x)$ is equal to the set of non-regular points
  of the boundary of $J_{N_f}$ for all $x\in A$.
\end{conjecture}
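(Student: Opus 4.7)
The plan is to establish the conjecture by building an iterated function system from the inverse branches of $N_f$ and identifying its repellor with the set $S$ of non-regular points of $\partial J_{N_f}$. This parallels the Barna--Saari--Urenko analysis in one dimension, but must account for the 1-dimensional stratification of $J_{N_f}$ in the plane: generically $J_{N_f}$ should be a finite union of smooth arcs joined at the points of $S$, so the regular and non-regular parts of $\partial J_{N_f}$ must be treated separately.

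First I would extend $N_f$ to $\RPt$ as in~\cite{DL18} so that Barnsley's Theorem~\ref{thm:Bar} applies on a compact ambient space. Next, at each $q\in S$ I would analyze the local action of the inverse branches of $N_f$: since $q$ is a repelling point of $N_f$, finitely many inverse branches are defined and contracting on some compact neighborhood $X_q$ of $q$, and they generate a hyperbolic IFS whose Hutchinson attractor I would aim to identify with $S\cap X_q$. In parallel, along the smooth arcs of $J_{N_f}$ meeting at $q$, $N_f$ acts as a transverse 1-dimensional expansion, so $N_f^{-1}$ drifts points monotonically along the arc and, in bounded time, into a neighborhood of some $q'\in S$. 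Combining these two local analyses and invoking Theorem~\ref{thm:Bar} with $X=\bigcup_{q\in S}X_q$, the backward iterates of any point whose orbit under $N_f^{-1}$ eventually enters $X$ should accumulate exactly on $S$; the set $A$ could then be taken to be the interior of the set of such points in $f(\RPt)$.

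The main obstacle is the identification of the local IFS attractor at each $q\in S$ with $S\cap X_q$ itself. This requires classifying the local topology of $J_{N_f}$ at a non-regular point --- how many smooth arcs meet, with what tangencies, and how the inverse branches permute them --- and showing that these meeting configurations are preserved in a self-similar way by the backward dynamics. In the holomorphic setting one has strong rigidity (holomorphicity, Riemann--Hurwitz constraints, uniform expansion on neighborhoods of the Julia set) which forces the Julia set to be locally self-similar at every repelling point; in the real planar case no such rigidity is automatic, and any quantitative version of this step depends critically on the classification of Newton maps of real quadratic planar maps obtained elsewhere in the paper. A secondary difficulty is formulating the genericity hypothesis and identifying the maximal admissible $A$: already in the one-dimensional Barna case the description of the backward basins requires excluding a small exceptional set, and the analogous exceptional set in $\Rt$ may itself have nontrivial structure that must be controlled before the conjecture can be stated as a theorem.
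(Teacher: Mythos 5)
The statement you are addressing is Conjecture~\ref{conj:alpha}; the paper does not prove it. It only presents numerical evidence for it and, even for the degenerate pencils of type $(m,1)$ where Conjecture~2 is proved, explicitly reduces Conjecture~1 to an open one-dimensional question about real Newton maps. So there is no proof in the paper to compare yours against; the only question is whether your argument would close the conjecture, and it would not.

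Your overall strategy --- compactify to $\RPt$, apply Barnsley's Theorem~\ref{thm:Bar} to the inverse branches, and identify the resulting repellor with the set $S$ of non-regular boundary points --- is exactly the heuristic the paper itself runs through for $\fN_{0,0}$, $\fN_{-2,2}$ and $\fN_{5,0;1}$, so the route is the natural one. But the two steps you flag as ``obstacles'' are the entire content of the conjecture, and some of your structural hypotheses conflict with what the paper's own evidence (and Conjecture~\ref{conj:J}) indicate. First, $S$ is not expected to be a finite set of junction points of finitely many smooth arcs: Conjecture~\ref{conj:J} describes $J_{N_f}$ as a countable union of wedge sums of countably many circles together with Cantor sets of circles, and the analysis of $\fJ_{0,0}$ exhibits infinitely many components accumulating tangentially on the ghost line (``the delta of a river''), so a cover $X=\bigcup_{q\in S}X_q$ by finitely many compact neighborhoods carrying hyperbolic local IFSs is not available, and local self-similarity at each $q\in S$ cannot be assumed. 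Second, the inverse branches $w_{m,n}$ are not globally defined contractions: they cease to be real on the region where the radicands are negative, and the paper notes that for $\fN_{-2,2}$ and $\fN_{-1,2}$ there is a bounded open set $N\subset D$ on which the branch system is not even a well-defined IFS, so neither the Hutchinson--Barnsley theorem nor Theorem~\ref{thm:Bar} applies as stated without an extension that the paper itself only conjectures. Third, the presence of chaotic attractors on ghost lines (e.g.\ $\fN_{-1,2}$, $\fN_{3,-4;1}$) means $J_{N_f}$ can have non-empty interior, so the ``non-regular points of the boundary'' are not reachable by a transverse-expansion argument along smooth arcs of $J_{N_f}$. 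In short, your proposal is a reasonable research program that reproduces the paper's heuristics, but it is not a proof, and the gaps you candidly identify are precisely the ones that keep the statement at the level of a conjecture.
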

\begin{conjecture}
  \label{conj:J}
  Let $f:\Rt\to\Rt$ be a polynomial map of degree $n\geq3$ with $n$ distinct real roots $\{c_i\}$. Then:
  %
  \begin{enumerate}
  \item $J_{N_f}$ is the countable union of wedge sums of countable number of circles and of Cantor sets of circles of measure zero;
  \item $F_{N_f}$ has no wandering domains;
  \item the union of the basins of attraction $\cF_{N_f}(c_i)$ has full Lebesgue measure;
  \item every neighborhood of any point of $J_{N_f}$ contains points from at least two distinct basins of attractions;
  \item unlike the holomorphic case:
    \begin{enumerate}
    \item basins of attractions are not necessarily simply connected;
    \item immediate basins of attraction are not necessarily unbounded;
    \item $J_{N_f}$ can have interior points without being equal to the whole $\RPt$.
    \end{enumerate}
  \end{enumerate}
\end{conjecture}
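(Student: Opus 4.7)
The plan is to tackle the five items of the conjecture separately, since they concern rather different aspects of the dynamics, while keeping as a unifying theme the role of the critical set $Z_{N_f}$ of the Newton map and of its iterated preimages: these should play in two real dimensions the part that iterated preimages of critical points play in Barna's one-dimensional theorem. The first step would be to analyze the geometry of $Z_{N_f}$ for a generic polynomial $f$ with $n$ simple real roots. Since $N_f(x)=x-[Df(x)]^{-1}f(x)$, the set $Z_{N_f}$ is a real algebraic curve whose components partition $\Rt$ into open regions on which $N_f$ is a local diffeomorphism onto its image, and each super-attracting root $c_i$ sits in the interior of one such region. A natural candidate for $J_{N_f}$ is the closure of $\bigcup_{k\ge 0} N_f^{-k}(Z_{N_f})$; the first substantive task would be to prove this identification. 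Coupled with a coding by the symbolic dynamics on the connected components of $\Rt\setminus Z_{N_f}$, in the spirit of Saari--Urenko, this should yield the decomposition asserted in item~(1) between wedge-of-circles pieces, coming from bounded invariant regions, and Cantor-of-circles pieces, coming from hyperbolic blocks. Item~(5) would then be handled by exhibiting explicit examples within the families classified in this paper that realize non-simply-connected basins, bounded immediate basins, and Julia sets with non-empty interior strictly smaller than $\RPt$.

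For item~(3), full Lebesgue measure of the union of basins, I would try to adapt a bounded-distortion argument along orbits that remain at a definite distance from the post-critical set $\bigcup_{k\ge 0} N_f^k(Z_{N_f})$, showing that the set of points whose orbit is not eventually trapped near some $c_i$ has Lebesgue measure zero. A variant of Barnsley's Theorem~\ref{thm:Bar}, applied to small neighborhoods of the hyperbolic blocks identified in step one, would give the required repellor structure with vanishing measure. Item~(4) should then follow from a topological transitivity argument on $J_{N_f}$ combined with openness of the basins, mimicking part~(7) of Theorem~\ref{thm:JF}: once any open neighborhood of a point of $J_{N_f}$ is shown to intersect the forward orbit of every neighborhood of $J_{N_f}$, and hence, by continuity, the open basins of at least two of the roots.

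The main obstacle, and the step I expect to resist direct attack, is item~(2): the absence of wandering Fatou components. In the complex holomorphic setting this is Sullivan's theorem, whose proof relies crucially on the finite-dimensionality of the quasiconformal deformation space of a rational map, a tool with no counterpart for real-analytic maps of the plane. One possible route is to extend $N_f$ to a rational map of $\KPt$ in the manner of Hubbard--Papadopol and argue that a wandering Fatou component in $\Rt$ would force the existence of a wandering component in the complex extension, which is then ruled out by the available higher-dimensional analogues of Sullivan's theorem. An alternative is a real-analytic Ma\~n\'e-style argument based on recurrence to the post-critical set. Either way, I expect this item to be where a genuinely new ingredient, beyond the translation of known one-dimensional or holomorphic arguments, is required.
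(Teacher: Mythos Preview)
The fundamental issue is that the statement you are trying to prove is a \emph{conjecture}, and the paper does not prove it. It is presented as Conjecture~\ref{conj:J}, supported only by numerical evidence (the basin pictures, $\alpha$-limit computations, and the Morphology in Parameter Space experiments in the final section). The only rigorous result in the paper toward this conjecture is Theorem~6, which establishes Conjecture~\ref{conj:J} in the degenerate case $\tau(\cP_f)=(m,1)$; that proof works by an elementary reduction to Barna's one-dimensional theorem, using the fact that after an affine change of coordinates $N_f(x,y)=(N_{p_y}(x)+\text{correction},0)$, so that the two-dimensional dynamics collapses to a one-parameter family of one-dimensional Newton maps. Nothing in the paper even attempts the generic $(2,2)$ case analytically.

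Your proposal is therefore not comparable to a proof in the paper, because there is none. Read on its own terms, it is a reasonable outline of a research program, but it is not a proof: the identification $J_{N_f}=\overline{\bigcup_k N_f^{-k}(Z_{N_f})}$ is asserted as ``the first substantive task'' without any mechanism for establishing it; the bounded-distortion argument for item~(3) is named but not carried out, and in two real dimensions the required control of the post-critical set is far from automatic; and, as you yourself recognize, item~(2) has no known real-analytic substitute for Sullivan's theorem, so invoking a hypothetical higher-dimensional analogue or a Ma\~n\'e-type argument is a wish, not a step. Finally, for item~5(c) your plan to exhibit examples ``within the families classified in this paper'' runs into the difficulty that the paper's examples of Julia sets with non-empty interior (e.g.\ $\fN_{-1,2}$, $\fN_{3,-4;1}$) all have \emph{fewer} than the maximal number of real roots, whereas the conjecture is stated under the hypothesis of $n$ distinct real roots; the paper's own MPS data in fact suggest that no extra attractor arises in the maximal-roots regime.
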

In the next two sections we study numerically in some detail the dynamics of the Newton maps of two of the simplest
families  of polynomial homomorphisms of the plane. Our results fully support both our conjectures.
In the remainder of this section we recall some well known properties of the Newton maps (e.g. see~\cite{HP08,Hub15}) that
we will use below.

Let $(x^\alpha)$, $\alpha=1,\dots,n$, be a coordinate system in $\K^n$, where $\K=\bR$ or $\bC$,
and let us denote by $f^\a$, $\a=1,\dots,n$, the components of the map $f:\K^n\to \K^n$, by $\partial_\alpha f^\a$ the
entries of its Jacobian matrix and by $\partial_\a \bar f^\alpha$ those of the Jacobian of $f$'s inverse,
namely the inverse matrix of $(\partial_\alpha f^\a)$. Then, in coordinates, the components of $N_f$ are
given by
$$
N_f^\alpha(x) = x^\alpha - \partial_\a \bar f^\alpha(x) f^\a(x)
$$
%
\begin{proposition}
  The entries of the Jacobian matrix $D_xN_f$ are given by
  $$
  \partial_\gamma N_f^\alpha(x) = \partial_b\bar f^\alpha(x)\,\partial^2_{\beta\gamma} f^b(x)\,\partial_\a \bar f^\beta(x)\,f^\a(x).
  $$
\end{proposition}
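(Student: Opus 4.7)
The plan is to differentiate the closed-form expression
$$
N_f^\alpha(x)=x^\alpha-\partial_\a\bar f^\alpha(x)\,f^\a(x)
$$
term by term, using the product rule, and then to simplify by exploiting the fact that $(\partial_\a\bar f^\alpha)$ is, by definition, the inverse of the Jacobian matrix $(\partial_\alpha f^\a)$.

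First I would apply $\partial_\gamma$ to the right-hand side, obtaining three pieces: $\partial_\gamma x^\alpha=\delta^\alpha_\gamma$; a cross term $\partial_\a\bar f^\alpha\,\partial_\gamma f^\a$; and the term $\partial_\gamma(\partial_\a\bar f^\alpha)\,f^\a$ in which the derivative falls on the inverse-Jacobian factor. The key observation is that the cross term is exactly a matrix product of the inverse Jacobian with the Jacobian, so it equals $\delta^\alpha_\gamma$ and cancels the contribution of the identity piece, leaving only
$$
\partial_\gamma N_f^\alpha(x)=-\partial_\gamma\bigl(\partial_\a\bar f^\alpha(x)\bigr)\,f^\a(x).
$$

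The second step is to compute $\partial_\gamma(\partial_\a\bar f^\alpha)$, i.e.\ the derivative of the inverse of a matrix-valued function. The standard identity $\partial_\gamma(A^{-1})=-A^{-1}(\partial_\gamma A)A^{-1}$, obtained by differentiating the relation $A^{-1}A=I$ and solving, translates in our index notation into
$$
\partial_\gamma\bigl(\partial_\a\bar f^\alpha\bigr)=-\partial_b\bar f^\alpha\;\partial^2_{\beta\gamma}f^b\;\partial_\a\bar f^\beta.
$$
Substituting this back into the previous display and cancelling the two minus signs yields exactly the claimed formula.

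The only non-routine ingredient is the derivative-of-inverse identity, which one can either invoke as known or derive on the spot by differentiating $\partial_\a\bar f^\alpha\,\partial_\beta f^\a=\delta^\alpha_\beta$ and then multiplying by another copy of the inverse Jacobian to isolate $\partial_\gamma(\partial_\a\bar f^\alpha)$. The rest of the argument is bookkeeping of indices, so the main potential obstacle is purely notational: keeping careful track of which Latin indices are summed against Jacobian versus inverse-Jacobian factors, and ensuring that the symmetry $\partial^2_{\beta\gamma}f^b=\partial^2_{\gamma\beta}f^b$ is used to match the index order stated in the proposition.
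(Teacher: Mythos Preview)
Your proposal is correct and follows essentially the same route as the paper's own proof: differentiate $N_f^\alpha$ by the product rule, observe that the cross term $\partial_\a\bar f^\alpha\,\partial_\gamma f^\a$ cancels the $\delta^\alpha_\gamma$, and compute $\partial_\gamma(\partial_\a\bar f^\alpha)$ by differentiating the defining relation $\partial_\alpha f^\a\,\partial_\a\bar f^\beta=\delta^\beta_\alpha$. The paper carries out exactly these steps, only leaving the cancellation implicit in the phrase ``and the claim follows.''
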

\begin{proof}
  A direct calculation shows that
  $$
  \partial_\gamma N_f^\alpha
  =
  \partial_\gamma\left(x^\alpha-\partial_\a \bar f^\alpha f^\a\right)
  =
  \delta^\alpha_\gamma - \partial_\gamma\left(\partial_\a \bar f^\alpha\right) f^\a - \partial_\a \bar f^\alpha\partial_\gamma f^\a.
    $$
  Since $\partial_\alpha f^\a\partial_\a \bar f^\beta=\delta_\alpha^\beta$, then
  $\partial^2_{\gamma\alpha}f^\a\partial_\a \bar f^\beta+ \partial_\alpha f^\a\partial_\gamma\left(\partial_\a \bar f^\beta\right) $.
  Hence
  $$
  \partial_\gamma\left(\partial_\a \bar f^\alpha\right)  = - \partial_\a \bar f^\beta\partial^2_{\gamma\beta}f^b\partial_b \bar f^\alpha
  $$
  and the claim follows.
\end{proof}
\begin{corollary}
  The set $C_f=\{x\in KP^n\,|\,\det D_xN_f\neq0\}$ of all degenerate points of $N_f$ is given
  by all points $x$ such that the matrix
  $$
  (A_{\gamma}^b)=\left(\partial^2_{\gamma\beta}f^b(x)\,\partial_\a\bar f^\beta(x)\,f^\a(x)\right)
  $$
  is degenerate. Equivalently, $C_f$ is the set of all $x$ such that either $f(x)=0$ or the vector
  $\partial_b \bar f^\alpha(x)\,f^\a(x)$ is a critical direction of the quadratic map
  $v^\alpha\mapsto\partial^2_{\gamma\beta}f^\a(x)v^\gamma v^\beta$.
\end{corollary}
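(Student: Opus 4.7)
The Proposition already gives us $\partial_\gamma N_f^\alpha(x) = \partial_b \bar f^\alpha(x)\,A_\gamma^b(x)$, so the plan is to exploit this matrix factorisation and then reinterpret the remaining factor as the differential of the quadratic map in the statement. Concretely, I would write
$$
D_x N_f \;=\; \bigl(\partial_b \bar f^\alpha\bigr)\cdot\bigl(A_\gamma^b\bigr),
$$
observe that by hypothesis $N_f$ is defined where $Df$ is invertible (so $(\partial_b \bar f^\alpha)$ is invertible), and conclude $\det D_x N_f = 0$ iff $\det(A_\gamma^b) = 0$. This already yields the first characterisation in the statement (modulo what I read as a typo: $C_f$ should be the set where $\det D_x N_f = 0$, not $\neq 0$).

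For the second, equivalent description, I would set $v^\beta := \partial_\a \bar f^\beta(x)\, f^\a(x)$, so that
$$
A_\gamma^b \;=\; \partial^2_{\beta\gamma} f^b(x)\, v^\beta.
$$
Define the quadratic map $Q:\K^n\to\K^n$ by $Q^b(v) = \partial^2_{\beta\gamma} f^b(x)\, v^\beta v^\gamma$. A direct computation, using the symmetry $\partial^2_{\beta\gamma} f^b = \partial^2_{\gamma\beta} f^b$, gives
$$
(D_v Q)^b_{\,\gamma} \;=\; 2\,\partial^2_{\beta\gamma} f^b(x)\, v^\beta \;=\; 2\,A_\gamma^b,
$$
so $(A_\gamma^b)$ is degenerate iff $D_v Q$ is degenerate, i.e.\ iff $v$ is a critical direction of $Q$.

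It then remains to split into cases. Since $(\partial_\a \bar f^\beta)$ is invertible, $v=0$ holds iff $f(x)=0$; in that case $A\equiv 0$ is trivially degenerate and $x\in C_f$, accounting for the ``$f(x)=0$'' alternative in the statement. If $v\neq 0$, then $x\in C_f$ iff $D_v Q$ drops rank at the nonzero vector $v$, which is exactly the condition that $v$ be a critical direction of $Q$. Combining the two cases gives the equivalence claimed.

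No real obstacle arises here: the content is essentially bookkeeping on top of the previous Proposition, and the only mildly subtle point is recognising that the matrix $(A_\gamma^b)$ coincides (up to the irrelevant factor $2$) with the Jacobian of the quadratic map evaluated at the ``Newton vector'' $v=(Df)^{-1}f$. The most likely place to slip is simply keeping the contravariant/covariant indices straight and verifying that the matrix factorisation $D_x N_f = (\partial\bar f)\,A$ is literally an ordinary matrix product in the chosen index conventions, so I would carry out that index check explicitly at the start before deducing anything about determinants.
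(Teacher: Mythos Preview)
Your proposal is correct and follows exactly the line the paper implicitly intends: the Corollary is stated without proof, immediately after the Proposition computing $\partial_\gamma N_f^\alpha$, so the factorisation $D_xN_f=(\partial_b\bar f^\alpha)(A_\gamma^b)$ together with the invertibility of $(\partial_b\bar f^\alpha)$ on the domain of $N_f$ is precisely the intended justification. Your identification of $(A_\gamma^b)$ with (half of) the Jacobian of the quadratic map at the Newton vector $v=(Df)^{-1}f$, and the case split $v=0\Leftrightarrow f(x)=0$, are the natural way to unpack the ``equivalently'' clause; your remark about the typo ($\det D_xN_f=0$ rather than $\neq0$) is also apt.
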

Points where $DN_f$ is degenerate play an important role in the dynamics of $N_f$,
for example in the one-dimensional case $J_{N_f}$ is equal to $\alpha_f(C_f)$.
Moreover, the fact that $DN_f$ is identically zero at every root of $f$ gives us immediately
the following fundamental property:
\begin{corollary}
  All simple roots of $f$ are super-attractive fixed points for $N_f$.
\end{corollary}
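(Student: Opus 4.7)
The plan is to deduce both parts of the claim — fixed-point status and super-attractivity — directly from the formulas already established.

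First I would verify that every simple root $c$ of $f$ is actually in the domain of $N_f$ and is a fixed point. By definition, $c$ being a simple root means $f(c) = 0$ and the Jacobian $Df(c) = (\partial_\alpha f^{\a}(c))$ is invertible, so $[Df(c)]^{-1}$ exists and $N_f(c) = c - [Df(c)]^{-1}(f(c)) = c - [Df(c)]^{-1}(0) = c$. This confirms that $c$ is a fixed point of $N_f$.

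Next I would invoke the Proposition to compute $D_c N_f$. According to that formula,
$$
\partial_\gamma N_f^\alpha(c) = \partial_b\bar f^\alpha(c)\,\partial^2_{\beta\gamma} f^b(c)\,\partial_\a \bar f^\beta(c)\,f^\a(c),
$$
and since $c$ is a root of $f$, the factor $f^\a(c)$ vanishes identically in $\a$. Therefore every entry of the Jacobian matrix $D_c N_f$ is zero, i.e.\ $D_c N_f \equiv 0$.

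Finally, a fixed point at which the Jacobian of the map is the zero matrix is by definition super-attractive, since the derivative has spectral radius $0 < 1$ (with all eigenvalues equal to $0$), which gives the claim. I do not foresee any obstacle here: the work was done in proving the Proposition, and this corollary is essentially a matter of substituting $f(c) = 0$ into the formula.
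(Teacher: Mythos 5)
Your proof is correct and follows exactly the route the paper intends: the sentence preceding the corollary notes that the Jacobian formula from the Proposition carries the factor $f^\a(x)$, which vanishes at a root, so $D_cN_f=0$ and the fixed point is super-attractive. Your additional check that a simple root lies in the domain of $N_f$ and is indeed fixed is a worthwhile explicit detail that the paper leaves implicit.
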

\begin{remark}
  Note that, while in the one-dimensional case the fixed point at infinity of a polynomial is always repelling,
  fixed points at infinity in the multi-dimensional case can be attractive (e.g. see Example~4 in~\cite{DL18}).
\end{remark}
\begin{proposition}
  \label{prop:psi}
  If $\psi,\phi:\K^n\to\K^n$ are, respectively, {\em affine} and {\em linear} automorphisms of $\K^n$,
  namely $\psi^\alpha(x)=A_\beta^\alpha x^\beta+u^\alpha$ and $\phi^\a(y)=B_\a^b y^\a$ for some
  $A,B\in GL_n(\K)$ and $u,v\in\K^n$, then
  $$
  N_{\phi\circ f\circ\psi}=\psi^* N_f\,
  $$
  namely the Newton map $N_f$ has the same dynamics as $N_{\phi\circ f\circ\psi}$.
\end{proposition}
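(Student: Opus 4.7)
The plan is to verify the pointwise identity $N_{\phi\circ f\circ\psi}(x) = \psi^{-1}\bigl(N_f(\psi(x))\bigr)$ by direct computation; since $\psi$ is a diffeomorphism, this pointwise identity is exactly the statement that $N_f$ and $N_{\phi\circ f\circ\psi}$ are conjugate, and so have identical dynamics. Let me set $g=\phi\circ f\circ\psi$. Applying the chain rule to $g$ and using that $\psi'(x)=A$ and $\phi'(y)=B$ are constant, I get $g'(x)=B\,f'(\psi(x))\,A$, and therefore
$$
[g'(x)]^{-1}=A^{-1}\,[f'(\psi(x))]^{-1}\,B^{-1}.
$$
Since $\phi$ is linear, $g(x)=B\,f(\psi(x))$ with no additive constant. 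Substituting into the Newton formula, the factor $B^{-1}$ cancels against $B$ and gives
$$
N_g(x) = x - A^{-1}\,[f'(\psi(x))]^{-1}\,f(\psi(x)).
$$

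Next I would expand the right-hand side of the claimed identity. Using $\psi^{-1}(y)=A^{-1}y-A^{-1}u$ and $\psi(x)=Ax+u$, I compute
$$
\psi^{-1}\bigl(N_f(\psi(x))\bigr) = A^{-1}\psi(x) - A^{-1}u - A^{-1}\,[f'(\psi(x))]^{-1}\,f(\psi(x)),
$$
and the relation $A^{-1}\psi(x)=x+A^{-1}u$ kills the translation terms, leaving precisely the expression obtained for $N_g(x)$. Equality of the two formulas completes the proof, and the conjugacy conclusion is then immediate.

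The only genuinely delicate point is the bookkeeping of the translation $u$, and it simultaneously explains the asymmetric hypotheses on $\psi$ and $\phi$. Linearity of $\phi$ is essential: any translation part in $\phi$ would add a constant vector to $g(x)$ that the cancellation of $B$ and $B^{-1}$ could not absorb, destroying the Newton structure. Affineness of $\psi$ is allowed precisely because the translation introduced by $\psi$ inside $N_f$ reappears under $A^{-1}$ outside and is subtracted back off by the $-A^{-1}u$ coming from $\psi^{-1}$. No other obstacle arises; questions of existence and invertibility of the relevant derivatives are automatic on the domain where $N_f$ itself is defined, since $g'(x)$ is a product of three invertible matrices exactly where $f'(\psi(x))$ is invertible.
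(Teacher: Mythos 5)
Your proposal is correct and follows essentially the same route as the paper: a direct chain-rule computation showing $[(\phi\circ f\circ\psi)'(x)]^{-1}=A^{-1}[f'(\psi(x))]^{-1}B^{-1}$, cancellation of $B^{-1}$ against $B$, and cancellation of the translation terms of $\psi$, the only difference being that you use matrix notation while the paper works in index notation.
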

\begin{proof}
  In this case $\partial_\beta\psi^\alpha=A_\beta^\alpha$ and $\partial_b\phi^\a=B_b^\a$, so that
  $$
  \psi^* N_f^\alpha(x)
  =
  \bar A^\alpha_\beta \left[N_f^\beta(Ax+b) - b^\beta\right]
  = 
  $$
  $$
  =
  \bar A^\alpha_\beta \left[A^\beta_\gamma x^\gamma+b^\beta - \partial_b\bar f^\beta\big|_{Ax+b}\,f^b(Ax+b) - b^\beta\right]
  =
  $$
  $$
  =
  \delta^\alpha_\gamma x^\gamma - \bar A_\beta^\alpha\partial_b\bar f^\beta\big|_{Ax+b}\,\bar B_\a^b\,B^\a_c\,f^c(Ax+b) - b^\beta
  =
  $$
  $$
  =
  x^\alpha - \partial_\a(\overline{\phi\circ f\circ\psi})^\alpha(x)\,(\phi\circ f\circ\psi)^\a(x)
  =
  N_{\phi\circ f\circ\psi}^\alpha(x)
  $$
  since 
  $$
  \partial_\a(\overline{\phi\circ f\circ\psi})^\alpha(x)
  =
  \partial_\a(\bar \psi\circ\bar f\circ\bar\phi)^\alpha(x)
  =
  \partial_\a\bar\phi^b\,\partial_b\bar f^\beta\big|_{Ax+b}\,\partial_\beta\bar\psi^\alpha.
%
  $$
\end{proof}
\begin{definition}
  Let $f=(p,q)\in C^k(\Rt,\Rt)$. We call {\em pencil} $\cP_f$ associated to $f$ the
  linear subspace of $C^k(\Rt)$ generated by $p$ and $q$. If $p$ and $q$ are polynomials,
  by {\em type} of the pencil, denoted by  $\tau(\cP_f)$, we mean the pair of the highest and lowest degrees
  of polynomials in the pencil.
\end{definition}
Thanks to Proposition~\ref{prop:psi}, we can replace the components of a Newton map $N_f$ with any two
independent linear combination of them without altering its dynamics, so what really counts
is the pencil of its components rather than the components themselves.

When the pencil is of type $(m,0)$ it means that the differentials $dp$ and $dq$ are proportional, so that
$Df$ is singular at every point and the Newton map cannot be defined anywhere.
We discuss below the two simplest non-trivial cases, namely the degenerate case $\tau(\cP_f)=(m,1)$, where the Jacobian
of the Newton map is singular at every point, and the case $\tau(\cP_f)=(2,2)$, which is the lowest-degree non-trivial case.
%
%


\section{The case $\boldsymbol{\tau(\cP_f)=(m,1)}$}
All maps $f$ with $\tau(\cP_f)=(m,1)$ have a Newton map degenerate at every point.
Indeed, we can always find an affine map $\psi$ and a linear map $\phi$ so that
$\phi\circ f\circ\psi(x,y)=(p(x,y),y)$ and so, thanks to Proposition~\ref{prop:psi},
$N_f$ is smoothly conjugate to
$$
N_{\phi\circ f\circ\psi}(x,y) = \left(\frac{xp_x(x,y)+yp_y(x,y)-p(x,y)}{p_x(x,y)},0\right).
$$
In particular, this shows that the image of $N_f$ is one-dimensional. Correspondingly, the
counterimage of a generic point is a finite set of lines. 

Nevertheless, even this simple case gives already rise to non-trivial Julia sets and, most importantly,
we are able to prove for this class of maps Conjecture~2 as a direct consequence of Barna's Theorem:
\begin{theorem}
  Conjecture~2 holds for all generic maps $f:\Rt\to\Rt$ with $\tau(\cP_f)=(m,1)$.
\end{theorem}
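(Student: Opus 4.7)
The plan is to exploit the explicit form of $N_f$ derived just before the statement in order to reduce the two-dimensional dynamics to a one-dimensional Newton iteration, after which Barna's theorem does the heavy lifting. By Proposition~\ref{prop:psi} I may assume $f(x,y)=(p(x,y),y)$, so that
$$
N_f(x,y) = \bigl(\pi(x,y),0\bigr), \qquad \pi(x,y) := \frac{xp_x(x,y)+yp_y(x,y)-p(x,y)}{p_x(x,y)}.
$$
Setting $q(x):=p(x,0)$, a direct calculation gives $\pi(x,0)=x-q(x)/q'(x)=N_q(x)$. Genericity of $f$ can be arranged so that $q$ is a generic real polynomial of degree $m$ with $m$ distinct real roots $c_1,\dots,c_m$, producing the $m$ roots $(c_i,0)$ of $f$ on the invariant line $L=\{y=0\}$.

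With $\iota\colon\bR\to L$, $\iota(t)=(t,0)$, the map factors as $N_f=\iota\circ\pi$, whence for every $k\geq 1$
$$
N_f^k = \iota\circ N_q^{k-1}\circ\pi.
$$
Every dynamical feature of $N_f$ past the first iterate is therefore the pullback under the submersion $\pi$ of a feature of the one-variable Newton map $N_q$. In particular, off the singular curve $S=\{p_x=0\}$ one has $\cF_{N_f}\bigl((c_i,0)\bigr)=\pi^{-1}\bigl(\cF_{N_q}(c_i)\bigr)$, $F_{N_f}=\pi^{-1}(F_{N_q})$, and $J_{N_f}=\pi^{-1}(J_{N_q})$; moreover the fibers $\pi^{-1}(t)$ are real algebraic curves which generically close up inside $\RPt$ to topological circles.

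Barna's theorem applied to $q$ (together with the Wong and Saari--Urenko refinements when $m=3$) gives that $\bigcup_i\cF_{N_q}(c_i)$ has full Lebesgue measure in $\bR$, that $J_{N_q}$ coincides modulo a countable set with a Cantor set of measure zero, and that $F_{N_q}$ has no wandering components. Pulling these back through $\pi$, the coarea formula yields $|\pi^{-1}(N)|=0$ for every null $N\subset\bR$, giving item~(3) of Conjecture~\ref{conj:J}; the preimage of a Cantor set is a Cantor-parametrised family of circles, giving item~(1); absence of wandering components transports directly, giving item~(2); and since $\pi$ is open, every neighbourhood of a point of $J_{N_f}$ inherits from $J_{N_q}$ the property of meeting at least two distinct basins, giving item~(4). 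The ``unlike the holomorphic case'' points~(5a)--(5b) are then visible on explicit examples in this family, where the strips $\pi^{-1}(I)$ over open intervals $I\subset\bR$ can be multiply connected or bounded; item~(5c) does not arise here, which is consistent with the conjecture since it asserts only a possibility, not an obligation.

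The main technical obstacle is the bookkeeping around the singular set $S=\{p_x=0\}$, where $N_f$ is undefined and the clean identities above break down: one has to verify that $S$ is a real algebraic curve (hence null), that $\pi$ is a genuine open submersion on its complement, and that the resulting Fatou/Julia correspondence holds up to a set too small to affect the measure-theoretic and topological conclusions inherited from $N_q$. Once this is controlled, Conjecture~\ref{conj:J} for this family becomes exactly the $\pi$-pullback of Barna's conclusions for $q$.
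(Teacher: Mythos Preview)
Your approach is essentially the same as the paper's: reduce to $f(x,y)=(p(x,y),y)$, observe that $N_f$ collapses onto the line $\{y=0\}$ where it acts as the one-variable Newton map $N_q$ for $q(x)=p(x,0)$, express $N_f^k=\iota\circ N_q^{k-1}\circ\pi$, and pull Barna's conclusions for $q$ back through the submersion $\pi$. The paper phrases the pullback as ``$J_f$ is the collection of all level sets of $N_f(x,y)_x$ passing through $J_q\times\{0\}$'', which is exactly your $J_{N_f}=\pi^{-1}(J_{N_q})$; your treatment is a bit more explicit in ticking off the individual items of Conjecture~\ref{conj:J} (the paper does not separately discuss wandering domains or item~(5)) and in naming the coarea formula for the measure step, but the underlying argument is identical.
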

\begin{proof}
  We can consider without loss of generality the case $f(x,y)=(p(x,y),y)$. Let $\hat p_y(x)=p(x,y)$ and $q(x)=p_0(x)$,
  so that in particular the roots of $q$ are the left coordinates of the roots of $p$.
  A direct calculation shows that $N_f(x,y) = (N_{p_y}(x)+y\partial_y p(x,y)/\partial_x p(x,y),0)$ and, in particular,
  $N_f(x,0) = (N_{q}(x),0)$. By hypothesis, $q$ satisfies the conditions of Barna's theorem and so the
  intersection of $J_f$ with the $x$ axis is a Cantor set of 1-dimensional Lebesgue measure. Given any other
  horizontal line $y=y_0$, the iterates of $(x,y_0)$ under $N_f$ converge to a root $(r_x,r_y)$ of $f$ iff
  the left coordinate $N_f(x,y_0)_x$ of the point $N_f(x,y_0)$ converges to $r_x$ under $N_q$ and, analogously,
  $(x,y_0)\in J_f$ iff $N_f(x,y_0)\in J_q\times\{0\}$. In turn, this means that $J_f$ is the collection of all
  level sets of $N_f(x,y)_x$ passing through $J_q\times\{0\}$, which immediately implies that it is the union
  of a countable set together with a Cantor set of (2-dimensional) zero Lebesgue measure and that each connected
  component of it is piecewise smooth.

  When one of these level sets corresponds to an isolated point of $J_q$,
  then every regular point of that level set has a neighborhood containing points from only two connected components
  of the attracting basins corresponding to different roots. Otherwise, every such neighborhood contains points from
  countably many connected components belonging to at least two different roots.

  
  
\end{proof}
%
%
%
\begin{figure}
  \centering
  \begin{tabular}{cc}                                                                                                                                                                  
    \includegraphics[width=6.3cm]{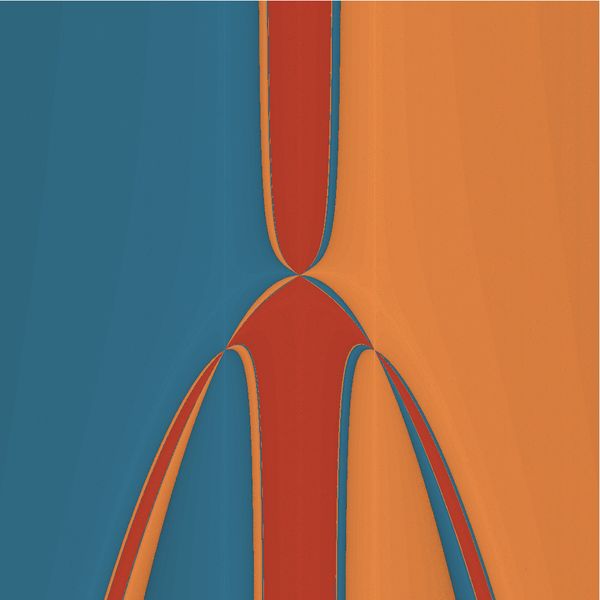}&\includegraphics[width=6.3cm]{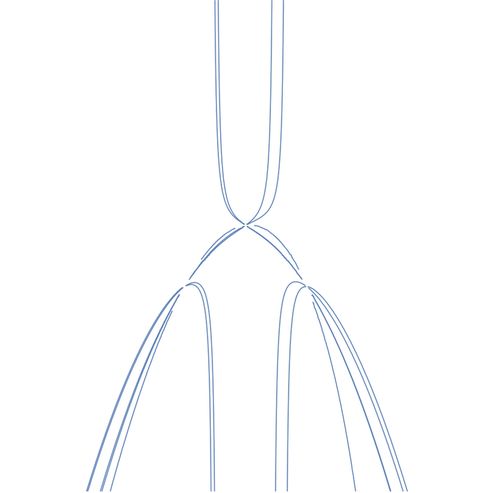}\\
    \includegraphics[width=6.3cm]{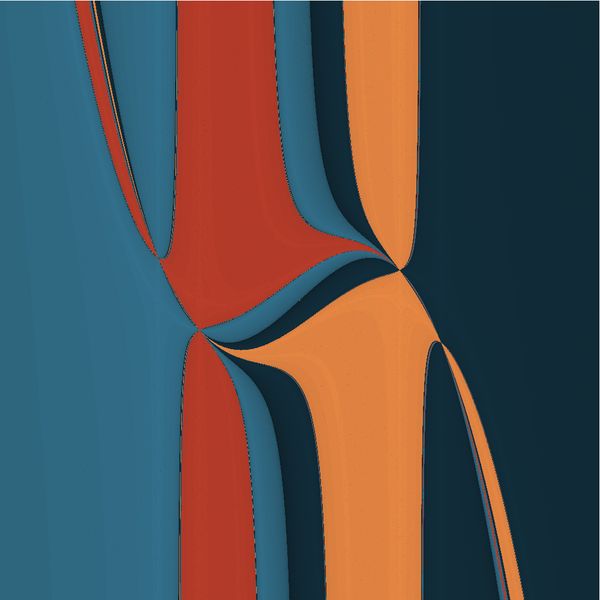}&\includegraphics[width=6.3cm]{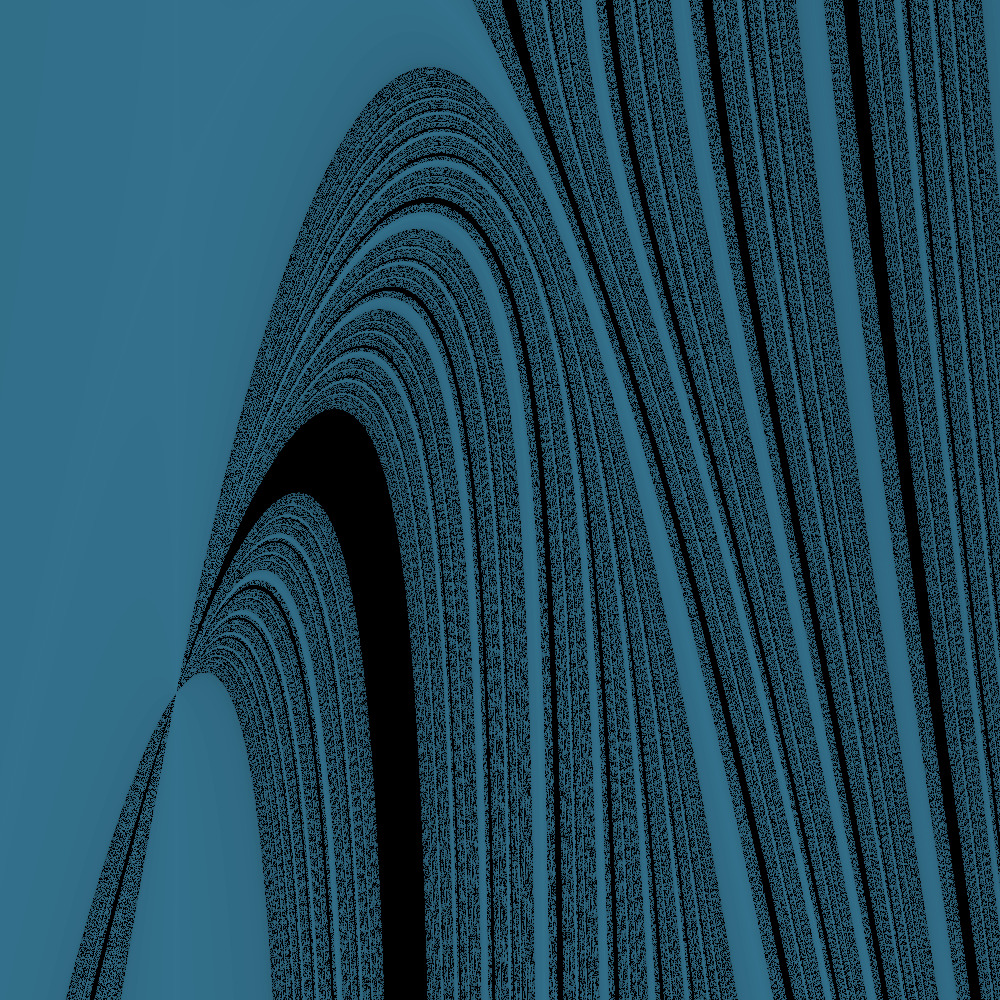}\\
  \end{tabular}
  \caption{%
    \em
    (Top) (left) Basins of attraction of $N_f$, with $f(x,y)=(x^3-x+3xy,y))$, in the square $[-4,4]^2$ and (right) the corresponding
    set $Z_{N_f}$ and a few of its counterimages (right).  Each of the three roots of $f$ corresponds to a different color and all points belonging
    to a root's basin have been given the same color of the root they converge to. The three visible nodes are the indeterminacy points of $N_f$.
    (Bottom, left) Basins of attraction of $N_g$, with $g(x,y)=(x^4-3x^2+xy+2,y)$. In this case there are four roots and four indeterminacy
    points, corresponding to the four colors and nodes in the picture. (Bottom, right)  Basin of attraction of $N_h$, with
    $h(x,y)=(x^3+xy-2x+2,y)$. This map has a single root and its basin, as the picture clearly suggests, is not of full measure because of the
    presence of an attracting $2$-cycle. 
  }
  \label{fig:cl}
\end{figure}
Note that, unlike in the one-dimensional case, in the two dimensional case a generic Newton map will not extend to the whole $\RPt$
because at some finite number of points both the numerator and denominator will go to zero. These are called {\em points of indeterminacy}.
%

\subsection{$\boldsymbol{f(x,y)=(x^3+3xy-x,y)}$}
In Fig.~\ref{fig:cl} (top row) we show two pictures relative to the case of the cubic polynomial map $\boldsymbol{f(x,y)=(x^3+3xy-x,y)}$,
with roots $(0,0)$, $(\pm1,0)$,
whose Newton map is
  $$
  N_f(x,y)=\left(x\frac{2x^2+3y}{3x^2-1+3y},0\right).
  $$
  In the left picture we show the basins of attraction of $N_f$ in the square $[-4,4]^2$; 
  the three immediate basins of attraction are the largest visible basin for each of the three colors and are clearly unbounded. 
  The extension of $N_f$ on $\RPt$, in homogeneous coordinates, is the map
  $$
  [x:y:z]\mapsto[x(2x^2+3yz):0:z(3x^2-z^2+3yz)]
  $$
  that restricts at infinity to the constant map $[x:y:0]\mapsto[1:0:0]$ at all points except $[0:1:0]$, which is a point of indeterminacy.
  $N_f$ has exactly other three points of indeterminacy in $\Rt$, namely the real roots of the system of sixth degree
  $x(2x^2+3y)=0, 3x^2-1+3y=0$, whose coordinates are $(0,1/3)$, $(-1,-2/3)$ and $(1,-2/3)$.
  These four points correspond exactly to the nodal points of $J_f$, three of which are visible in
  Fig.~\ref{fig:cl} (top, left).

  The set of all counterimages of the point $(1.5,0)$ up to the third recursion level is shown in Fig.~\ref{fig:cl} (top, right)
  and clearly suggests that Conjecture~1 holds for this type of maps. Notice that all smooth connected components of $J_{N_f}$
  are segments of cubic polynomials asymptotic to the vertical direction, corresponding to the fact that they all meet at the
  nodal point at infinity $[0:1:0]$.

\subsection{$\boldsymbol{g(x,y)=(x^4-3x^2+xy+2,y)}$}
In Fig.~\ref{fig:cl} (bottom, left) we consider the case of the quartic polynomial map $\boldsymbol{g(x,y)=(x^4-3x^2+xy+2,y)}$,
with roots $(\pm1,0)$, $(\pm\sqrt{2},0)$, whose Newton map is
  $$
  N_g(x,y)=\left(\frac{3x^4-3x^2+xy-2}{4x^3-6x+y},0\right).
  $$
  Its projective extension in homogeneous coordinates writes
  $$
  [x:y:z]\mapsto[3x^4-3x^2z^2+xyz^2-2z^4:0:z(4x^3-6xz^2+yz^2)]
  $$
  and $[0:1:0]$ is a point of indeterminacy. The other fours points of indeterminacy of $N_g$ are
  $(-1,-2)$, $(1,2)$, $(-\sqrt{2},2\sqrt{2})$ and $(\sqrt{2},-2\sqrt{2})$.
  In the picture we show the basins of attraction in the rectangle $[-3,3]\times[-20,20]$. Even in this case
  the immediate basins are the largest of their own color visible in the picture and are unbounded and all
  unbounded branches of $J_{N_g}$ meet at the nodal point at infinity.

  \subsection{$\boldsymbol{h(x,y)=(x^3+xy-2x+2,y)}$}
  \label{ss:ac}
  In Fig.~\ref{fig:cl} (bottom, right) we consider the case of the cubic polynomial map $\boldsymbol{h(x,y)=(x^3+xy-2x+2,y)}$.
  This map has a single zero at about $(-1.77,0)$ and its Newton map is
  $$
  N_h(x,y)=\left(\frac{2x^3+xy-2}{3x^2+y-2},0\right).
  $$
  $N_h$ has only two points of indeterminacy: the point at infinity $[0:1:0]$ and, approximately, the point $(-1.77:-7.39)$.
  Unlike the two cases above, and correspondingly to what happens in case of the complex polynomial $p(z)=z^3-2z+2$,
  the basin of attraction of $h$'s only root has not full Lebesgue measure because of the presence of the super-attracting $2$-cycle
  $\{(0,0),(1,0)\}$.
  \begin{remark}
    In fact, this property of $p$ is universal in the sense that, if $q$ is a complex polynomial of degree
    three such that $N_q$ has a super-attracting 2-cycle, then $N_q$ is conjugated to $N_p$~\cite{CC13}.
  \end{remark}

  \medskip
   {Whether Conjecture~1 holds or not for these maps is a one-dimensional problem, namely it is true if and only
   if the same happens for real Newton maps in one variable. We are not aware of any results in this direction to date.}

   %
   \section{The case $\boldsymbol{\tau(\cP_f)=(2,2)}$}
%
Now we consider the case $f=(p,q)$ where both $p$ and $q$ are quadratic polynomials
in general position. Hubbard and Papadopulos~\cite{HP08} showed that, in the complex case, one can reduce this problem
to the study of the Newton maps $N_g$ with
$$
g(x,y)=(y-x^2,(x-x_0)-(y-y_0)^2),\, (x_0,y_0)\in\bC^2.
$$
In the real case we have, instead, two distinct cases:
\begin{proposition}
  Let $f(x,y)=(p(x,y),q(x,y)):\Rt\to\Rt$, with $p$ and $q$ quadratic polynomials in generic position.
  Then $N_f$ is smoothly conjugated to $N_g$, where $g(x,y)$ has one of the following two forms:
  \begin{enumerate}
    \item $\ff_{x_0,y_0}(x,y)=(y-x^2,(x-x_0)-(y-y_0)^2)$, $x_0\geq0$, $y_0\in\bR$;
    \item $\ff_{x_0,y_0;a}(x,y)=(xy-1,(x-x_0)^2-a(y-y_0)^2-1)$, $y_0\geq x_0\geq0,\,a>0$.
  \end{enumerate}
\end{proposition}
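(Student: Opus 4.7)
My plan is to use Proposition~\ref{prop:psi}: since $N_{\phi\circ f\circ\psi}$ is smoothly conjugate to $N_f$ for any linear $\phi$ and any affine $\psi$, I can freely replace $(p,q)$ by any basis of the pencil $\cP_f$ (via the $GL_2(\bR)$-action of $\phi$) and apply any affine change of coordinates on $\Rt$ (via $\psi$), a total of $10$ real parameters. The decisive invariant is the homogeneous binary quadratic $D(\lambda,\mu):=\det(\lambda A_p+\mu A_q)$, where $A_p,A_q$ are the symmetric $2\times 2$ matrices of the quadratic parts of $p,q$. Genericity ensures $D$ is nondegenerate, and its two roots $[\lambda:\mu]\in\CPo$ split into two cases invariant under the group action: either two distinct real roots (Case~1) or a complex conjugate pair (Case~2). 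I claim these correspond to the two forms $\ff_{x_0,y_0}$ and $\ff_{x_0,y_0;a}$ respectively.

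In Case~1, I would take the two real roots of $D$ as a new basis $(p_1,p_2)$ of the pencil; then both quadratic parts have rank~$1$, i.e., are scalar multiples of squares of linear forms $L_1,L_2$. Genericity ensures $L_1,L_2$ are linearly independent, so a linear coordinate change sends them to $y,x$. Completing squares by translation, rescaling each $p_i$, and applying an anisotropic scaling then reduces $p_1$ to $y-x^2$. The residual stabilizer of $y-x^2$ (the one-parameter family $(x,y)\mapsto(sx,s^2y)$ together with rescaling $p_1$ by $s^2$), combined with the remaining rescaling of $p_2$ and the discrete symmetry $x\mapsto -x$, is then just enough to normalize $p_2$ to $(x-x_0)-(y-y_0)^2$ with $x_0\geq 0$.

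In Case~2, I would first show that every real member of the pencil is indefinite of signature $(1,1)$: if any member $A$ were positive definite, then $A^{-1}B$ for any other member $B$ would be similar to a real symmetric matrix and hence have real eigenvalues, forcing $D$ to have real roots---a contradiction. Next I would invoke the real Jordan--Kronecker normal form for pencils of symmetric $2\times 2$ matrices in this regime: after a linear coordinate change and an appropriate choice of basis of the pencil, $(A_p,A_q)$ becomes $\bigl(\begin{smallmatrix}0 & 1/2 \\ 1/2 & 0\end{smallmatrix}\bigr)$ and $\bigl(\begin{smallmatrix}1 & 0 \\ 0 & -a\end{smallmatrix}\bigr)$ for a unique $a>0$ (the invariant $a$ is essentially the squared modulus of the imaginary eigenvalues of the trace-zero representative of $A_p^{-1}A_q$). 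Thus $p=xy+\text{(linear)}+\text{(constant)}$ and $q=x^2-ay^2+\text{(linear)}+\text{(constant)}$; completing squares, translating, rescaling $p$ and $q$, and exploiting the stabilizer $(x,y)\mapsto(sx,y/s)$ of $xy$ together with its discrete symmetries (swap $x\leftrightarrow y$ with appropriate rescaling, sign flips) would bring $(p,q)$ to $(xy-1,(x-x_0)^2-a(y-y_0)^2-1)$ with $y_0\geq x_0\geq 0$.

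The main obstacle will be the final bookkeeping in both cases: verifying that the residual continuous and discrete symmetries act on the leftover parameters $(x_0,y_0)$ or $(x_0,y_0,a)$ in exactly the way needed to impose the stated inequalities, and specifying precisely which configurations ``generic position'' must exclude so that the normalization steps always succeed---at minimum, that $D$ has distinct roots, that the rank-$1$ linear forms $L_1,L_2$ in Case~1 are independent, and that various linear parts remain nonzero along the reduction.
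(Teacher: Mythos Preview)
Your approach is correct and takes a genuinely different, more invariant-theoretic route than the paper. The paper proceeds by first normalizing $p$ alone to one of three canonical conics (elliptic $x^2+y^2-1$, hyperbolic $xy-1$, or parabolic $y-x^2$) and then, case by case, asks whether linear combinations with $q$ can produce two parabolic members of the pencil; only in the hyperbolic--hyperbolic subcase with $ac<0$ does it find that every combination stays hyperbolic, yielding form~2. Your organizing principle---the binary quadratic $D(\lambda,\mu)=\det(\lambda A_p+\mu A_q)$---cuts directly to the invariant that governs the dichotomy: the real roots of $D$ are exactly the rank-one (parabolic) members of the pencil, so their presence or absence is precisely what distinguishes the two normal forms. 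Your observation that a definite member would force $D$ to have real roots, together with the Jordan--Kronecker normal form for indefinite symmetric pencils, replaces the paper's case enumeration with standard linear-algebra machinery. What the paper's approach buys is elementarity---no pencil theory is invoked---while yours buys a transparent explanation of why exactly two cases arise and an explicit handle on the residual stabilizers needed to enforce the sign conditions $x_0\ge 0$ and $y_0\ge x_0\ge 0$, points on which the paper's proof is entirely silent.
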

\begin{proof}
  We say that a quadratic polynomial is elliptic, hyperbolic or parabolic depending on the type of its generic level set.
  Through an affine transformation, we can always reduce $p$ to one of the following quadratic polynomials:
  \begin{enumerate}
    \item $\bc(x,y)=x^2+y^2-1$ (elliptic);
    \item $\bh(x,y)=xy-1$ (hyperbolic);
    \item $\bp(x,y)=y-x^2$ (parabolic).
  \end{enumerate}
  When $p$ is elliptic, given the rotational symmetry of the circle, we can always find a second affine transformation
  that reduces $q$ to one of the following quadratic polynomials:
  \begin{enumerate}[label=\alph*.]
  \item $q_e(x,y)=a(x-x_0)^2+b(y-y_0)^2-1$, with $y_0\geq x_0\geq0$ and $a>b>0$;
  \item $q_h(x,y)=a(x-x_0)^2-b(y-y_0)^2-1$, with $y_0\geq x_0\geq0$ and $a>b>0$;
  \item $q_p(x,y)=y-y_0-a(x-x_0)^2$, with $a>0$, $x_0\geq0$ and $y_0\in\bR$.
  \end{enumerate}
  In the first two cases we can easily find two linear combinations that give us two parabolic polynomials,
  in the last case we can use $q_p$ to cancel one of the two quadratic terms of $\bc$. In all three cases
  then we can find an affine map $\psi$ and a linear map $\phi$ such that $f=\phi\circ\hat f\circ\psi$
  where both components of $\hat f$ are parabolic. With yet another affine transformation, we can finally
  transform $\hat f$ in the final form $\ff_{x_0,y_0}$, namely $N_f$ is conjugated to $N_{\ff_{x_0,y_0}}$.

  When $p$ is hyperbolic, we can assume that $q$ is either parabolic or hyperbolic (the elliptic case is covered
  above). In the first case, $q(x,y)=ax^2+2bxy+cy^2+\ell(x,y)$, where $\ell$ is linear and $ac-b^2=0$.
  by adding $2\lambda xy$ to $q$ we get a parabola iff $ac-(b+\lambda)^2=0$, namely $\lambda=0,-2b$,
  so again we reduce to the case of two parabolic polynomials and so to $\ff$.
  In the second case, $q(x,y)=ax^2+2bxy+cy^2+\ell(x,y)$ with $ac-b^2<0$. After adding $2\lambda xy$ to $q$,
  its type is determined by the sign of $d(\lambda)=ac-(b+\lambda)^2=ac-b^2-2b\lambda-\lambda^2$. Since
  the discriminant of $d(\lambda)$ is $4ac$, the sign of $d$ is strictly negative for all $\lambda$ when $a$
  and $c$ have opposite signs. In this case, therefore, all linear combinations of $p$ and $q$ are hyperbolic
  and $N_f$ is conjugate to some $N_{\ff_{x_0,y_0;a}}$.
\end{proof}
\begin{figure}
  \centering
    \includegraphics[width=13.5cm]{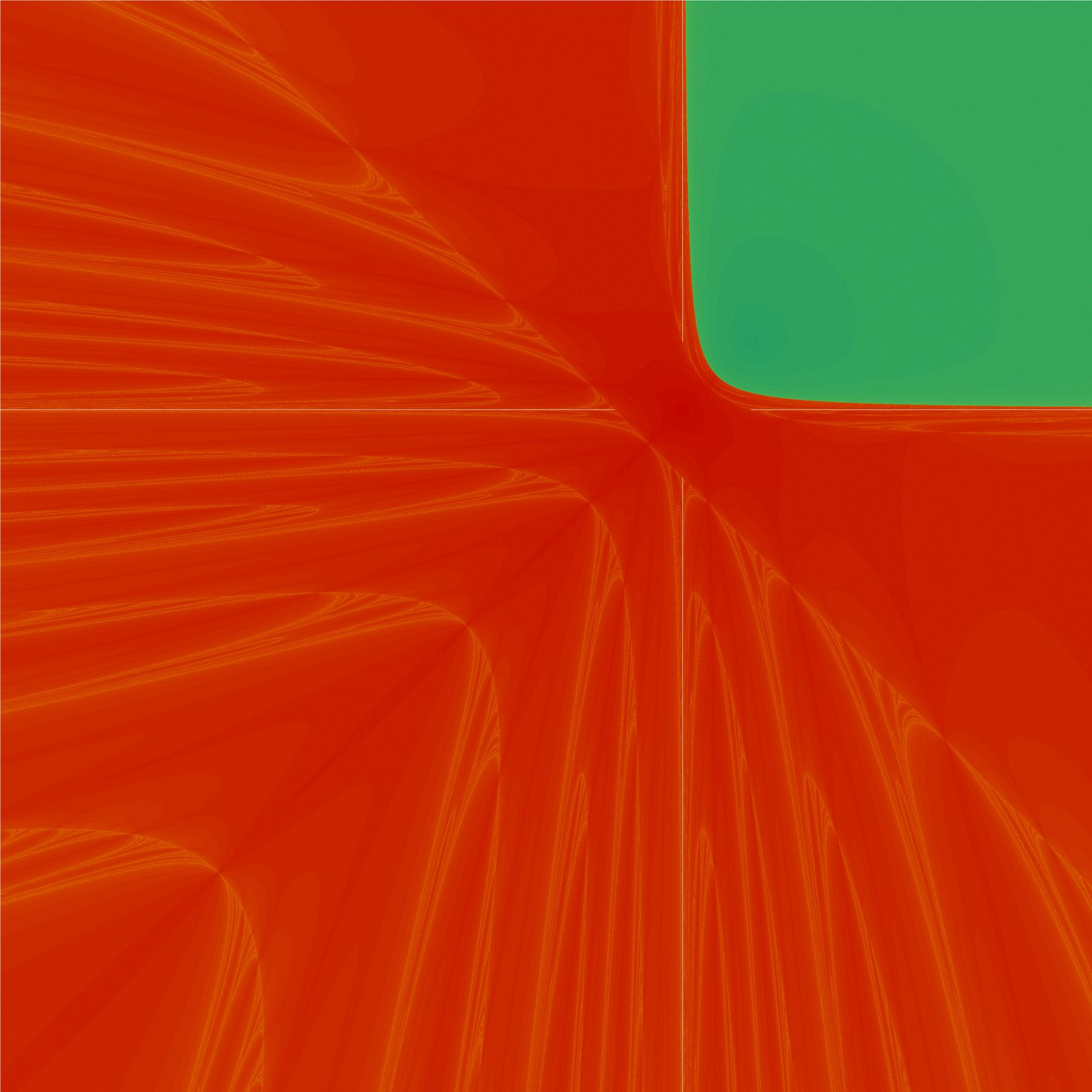}
  \caption{%
    \em
    Basins of attraction of $\fN_{0,0}$ 
    in the rectangle $[-6,10]\times[-10,6]$.
    The only real roots of $\ff_{0,0}$ are the points $(0,0)$ (whose basin is colored in red) and $(1,1)$ (in green).
    The yellow points are those for which the Newton method converges only after a large number of iterations --
    as the numerical and analytical data shown in Fig.~\ref{fig:qqa1} suggest, these are all  points belonging to
    the counterimages of $\fZ_{0,0}$. Lighter tints correspond to higher convergence time. 
  }
  \label{fig:qqa}
\end{figure}
\begin{figure}
  \centering
  \begin{tabular}{ccc}                                                                                                                                                                  
    \includegraphics[width=4.25cm]{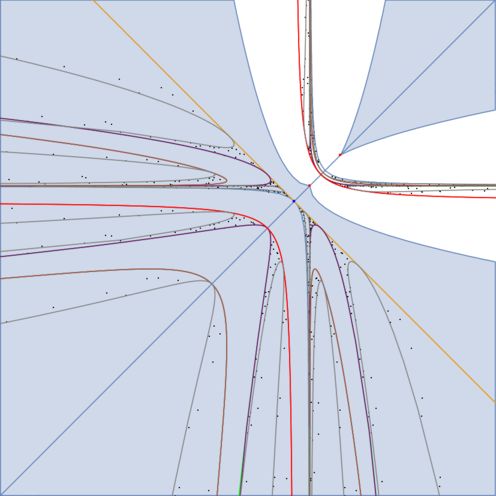}&\includegraphics[width=4.25cm]{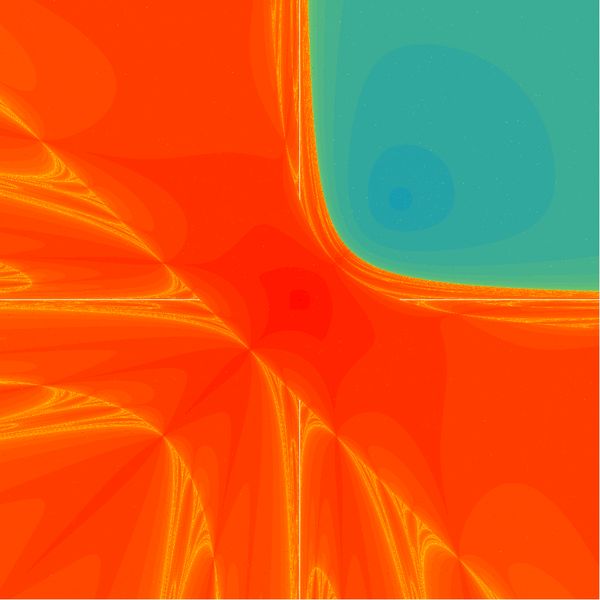}&\includegraphics[width=4.25cm]{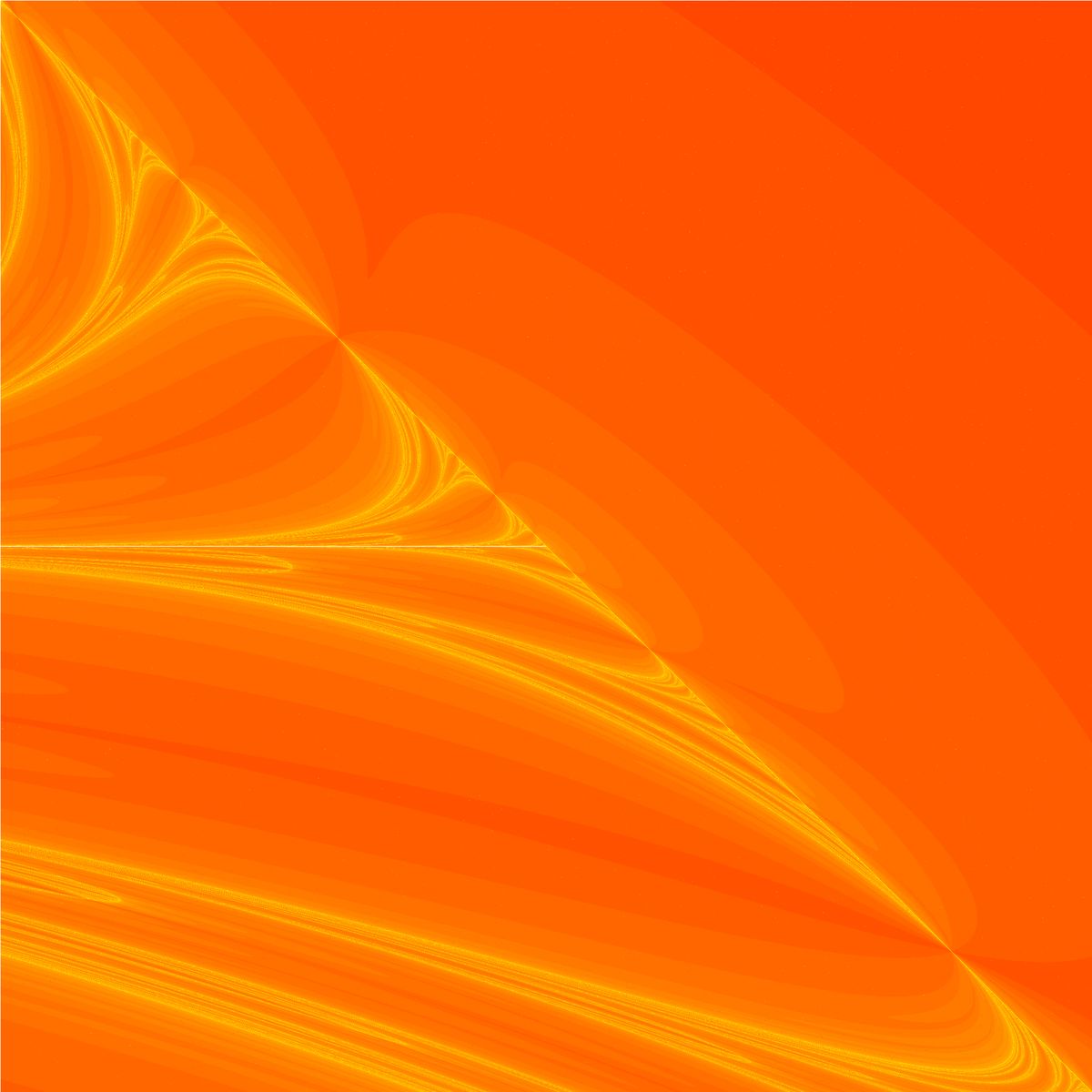}\\
    \includegraphics[width=4.25cm]{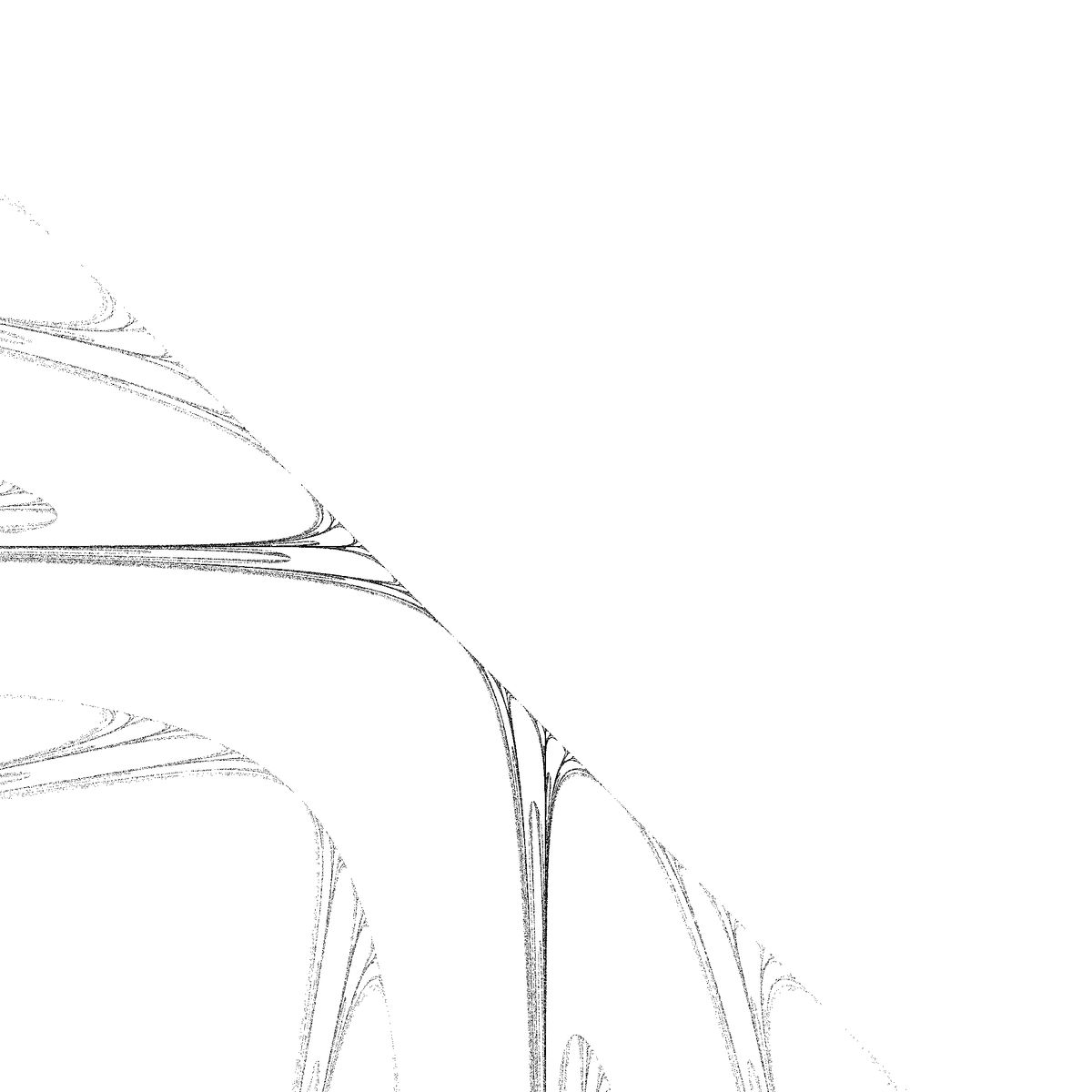}&\includegraphics[width=4.25cm]{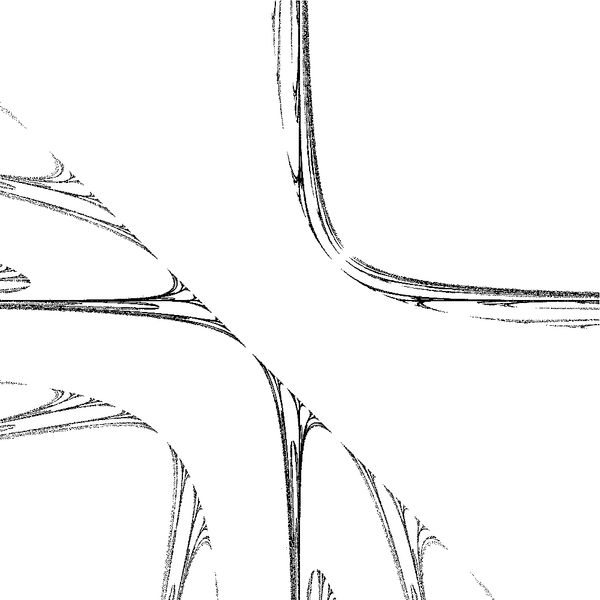}&\includegraphics[width=4.25cm]{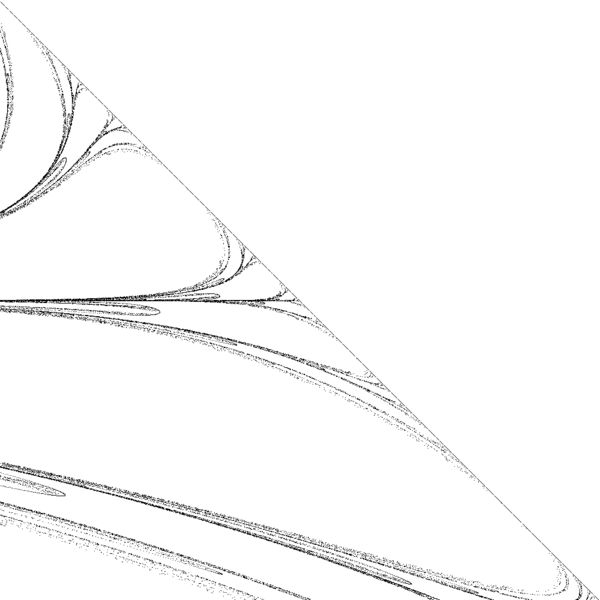}\\
    \includegraphics[width=4.25cm]{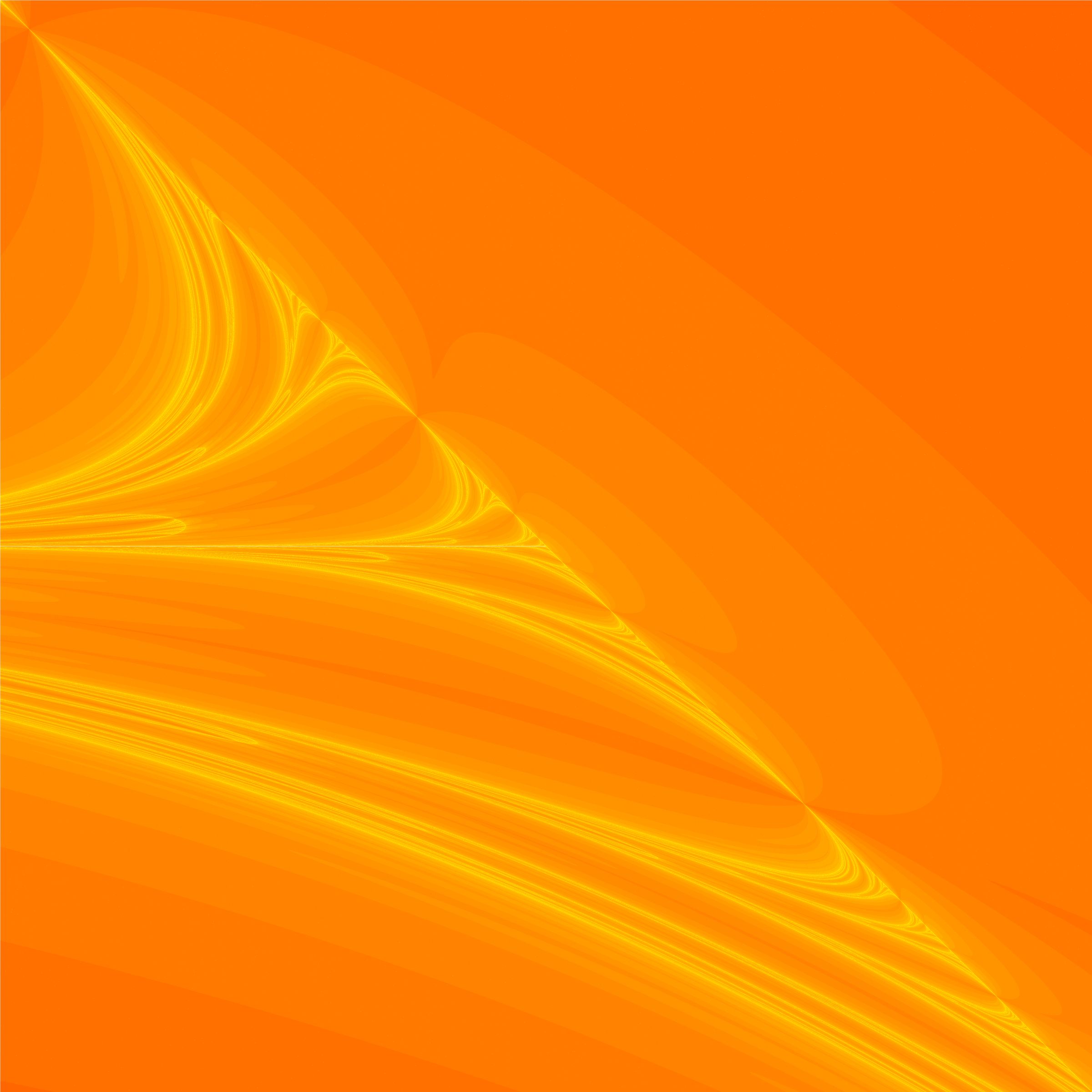}&\includegraphics[width=4.25cm]{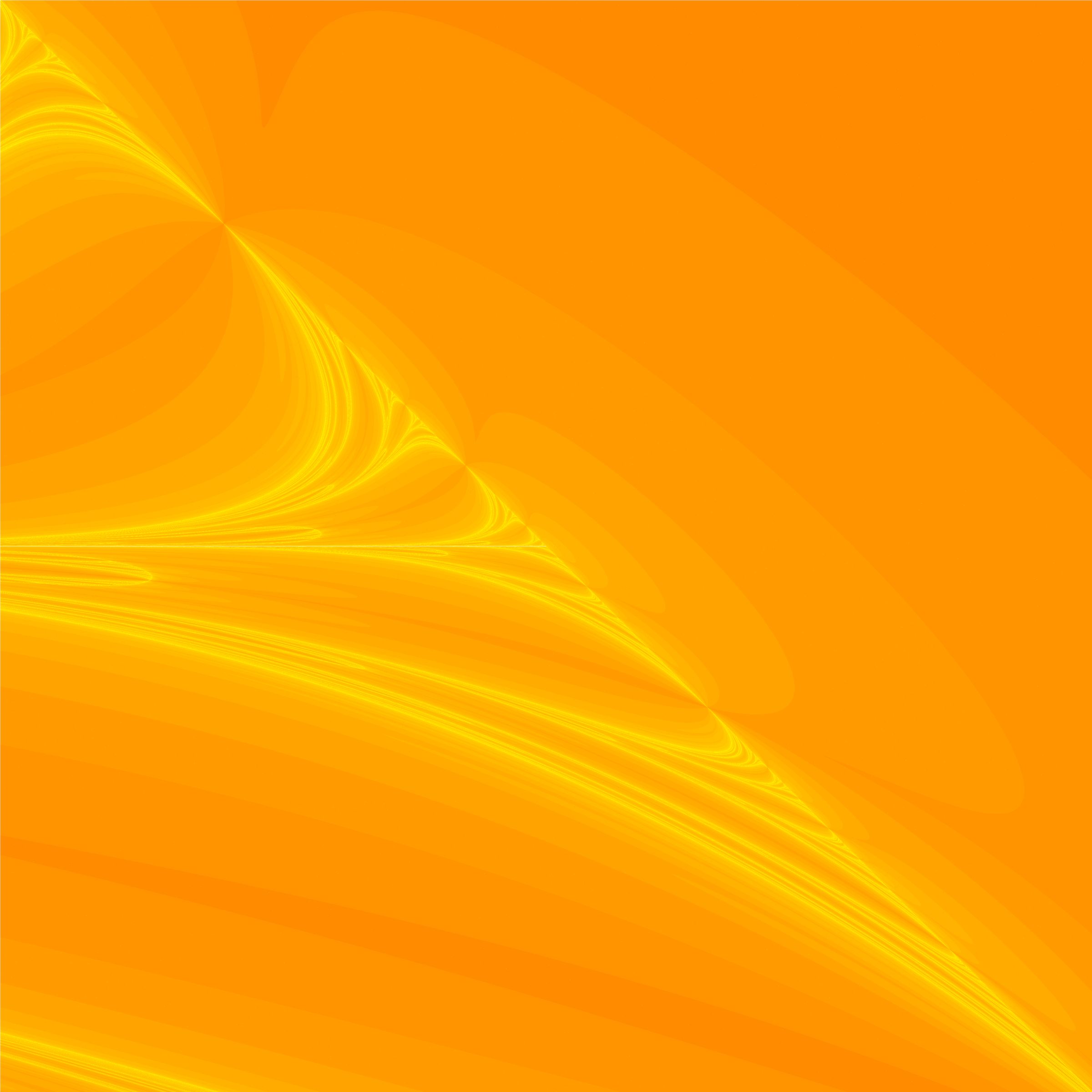}&\includegraphics[width=4.25cm]{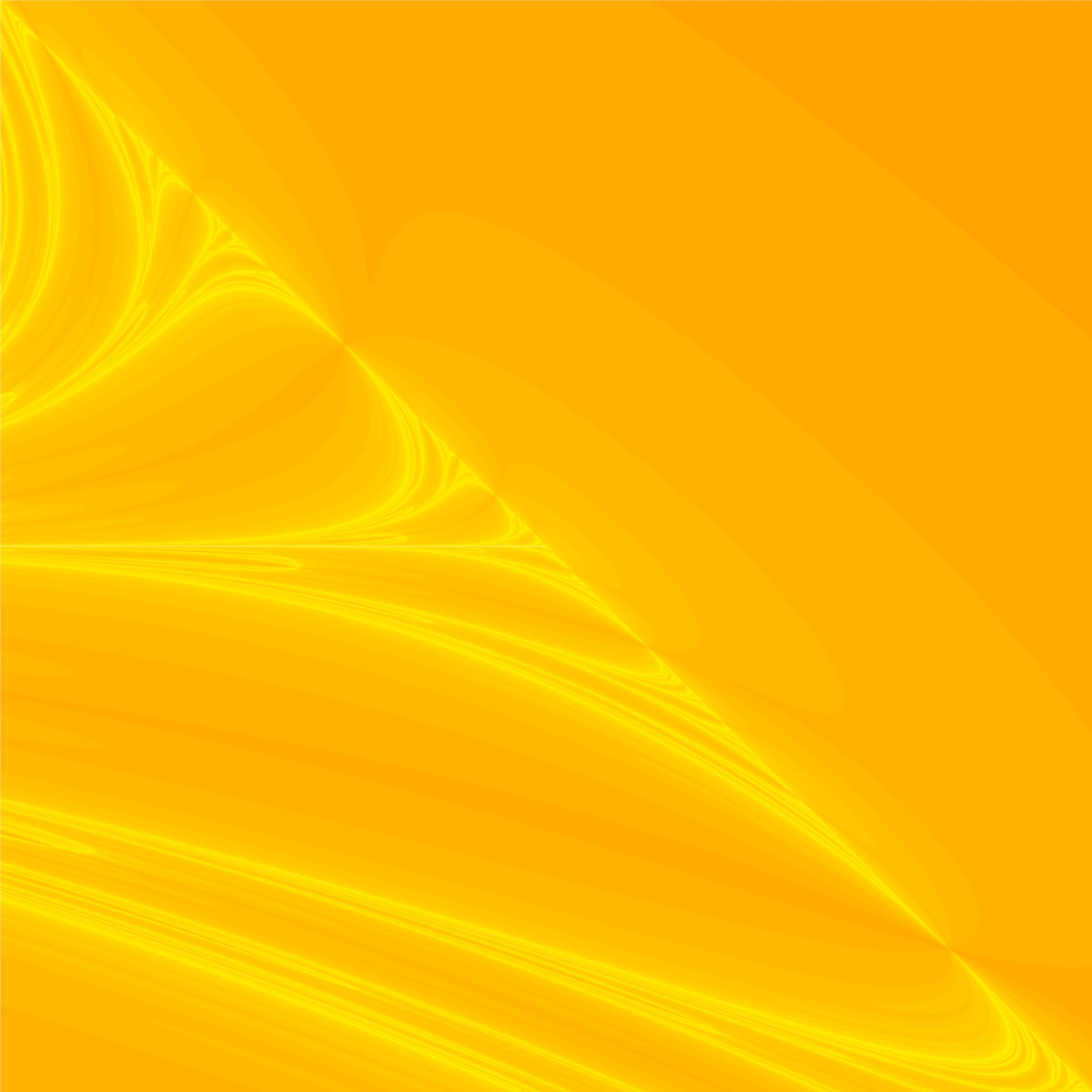}\\
  \end{tabular}
  \caption{%
    \em
    (Top, left) Fixed points (red), indeterminacy point (blue) and invariant lines of $\fN_{0,0}$ and set $\fZ_{0,0}$ together with some of their
    counterimages: the ghost line is in orange and its first and second counterimages in red and brown resp., $\fZ_{0,0}$ in
    light blue and its counterimages in purple and gray. The black points are the counterimages via $\fN_{0,0}$ of a single point of the plane
    up to the 5th level of iteration. (Top, center \& right) Basins of attraction and Julia set of $\fN_{0,0}$ in the rectangles
    $[-3,3]^2$ and $[0.95,1.05]\times[-0.05,0.05]$.
    (Center) Approximations of $\fJ_{0,0}$ obtained as the first $3\cdot10^5$ points of a random backward orbit
    (left) and as the points of the set $\fN_{0,0}^{-11}(1,-3.6)$ (center, right).
    (Bottom) Basins and Julia set of $\fN_{0,0}$ in the squares of side respectively $10^{-3}$ (left), $10^{-4}$ (center)
    and $10^{-5}$ centered at $(-1,0)$. Lighter colors correspond to higher convergence time. 
  }
  \label{fig:qqa1}
\end{figure}
\begin{definition}
  We denote respectively by $\fN_{x_0,y_0}$ and $\fN_{x_0,y_0;a}$ the Newton maps associated with $\ff_{x_0,y_0}$ and $\ff_{x_0,y_0;a}$
  and, analogously, we use the symbols $\fJ_{x_0,y_0}$, $\fJ_{x_0,y_0;a}$, $\fZ_{x_0,y_0}$ and $\fZ_{x_0,y_0;a}$ for their Julia sets and for
  the sets of degeneracy of their Jacobians.
\end{definition}
Below we present our numerical analysis of some Newton maps $\fN_{x_0,y_0}$ and $\fN_{x_0,y_0;a}$ and compare
it against our conjectures.
Our pictures are of two main types: basins of attractions, obtained by considering a regular lattice of
points inside a square (of the order of $10^3\times10^3$ points), evaluating the iterates of all such points
under $N_f$ and assigning to them different colors depending to which root of $f$ they appear to converge; 
basins of retraction, obtained by choosing an arbitrary point {\em within a suitable open set},
evaluating its backward iterates until some fixed recursion order (usually between 10 and 20)
and plotting all points obtained at the last recursion step.

%
\subsection{The map $\boldsymbol{\fN_{0,0}}$}
\label{sec:f00}
The Newton map
$$
\fN_{0,0} = \left(y\frac{2x^2+y}{4xy-1},x\frac{2y^2+x}{4xy-1}\right)
$$ 
was first studied, in this context, in great detail by Peitgen, Pr\"ufer and Schmitt~\cite{PPS88,PPS89}
and later, in the complex setting, by Hubbard and Papadopol~\cite{HSS01}.
In homogeneous coordinates, this map writes as 
$$
[x:y:z]\to[y(2x^2+yz):x(2y^2+zx):z(4xy-z^2)],
$$
from which it can be seen that $\fN_{0,0}$ has the following three points of indeterminacy: one bounded, $[1:1:-2]$,
and two at infinity, $[1:0:0]$ and $[0:1:0]$.
The restriction of $\fN_{0,0}$ to the circle at infinity, where defined, is the identity. On each point at infinity, the eigenvalues
of $D\fN_{0,0}$ are equal to 1 in the direction tangent to infinity and to 2 in the eigendirection transversal to it, namely the points
at infinity where  $D\fN_{0,0}$ is defined are all repelling with respect to points lying outside of the circle at infinity.
Nevertheless, the iterates of the points on the half-lines $\{x=0,|y|>1\}$ and $\{|x|>1,y=0\}$ diverge while bouncing
back and forth between the two half-lines just as if $[1:0:0]$ and $[0:1:0]$ were an attracting cycle -- this is not a contradiction
because  $\fN_{0,0}$ is not defined at these points.

Rational maps of degree three can have at most 6 invariant lines. As Hubbard and Papadopol showed in~\cite{HP08},
in the complex case the Newton map of a generic polynomial map with quadratic components has the maximal number
of invariant lines, namely the lines joining all pairs of its 4 roots and it restricts to each such line to the one-dimensional
Newton method for a quadratic polynomial with those roots. 
Correspondingly, in the real case the invariant (real) lines are all those joining pairs of distinct roots plus, in case not all
roots are real,
the intersection with the real plane of complex lines passing through pairs of distinct complex roots. For a generic such map
therefore there will be exactly 6 invariant straight lines when all roots are real and only 2 otherwise. In this last case, we call
{\em ghost line} the invariant line that does not contain any root.

The two lines invariant under $\fN_{0,0}$ are
the bisetrix of the first and third quadrant, joining the two real roots, and the ghost line $\ell=\{y+x+z=0\}$,
intersection with the real plane of the complex line passing through the two complex roots $e^{2\pi i/3}$ and $e^{4\pi i/3}$
(in brown in Fig.~\ref{fig:qqa1} top, left). $\fN_{0,0}$ restricts on $\ell$ to the Newton map of the polynomial
$p(x)=x^2+x+1$ and, correspondingly to the absence of roots of this polynomial, $J_p$ is the whole line and
$N_p$ has non-trivial repelling cycles, for instance $N_p(0)=-1$ and $N_p(-1)=0$. On the bisetrix, instead,
it coincides with the Newton map of the polynomial $q(x)=x^2-x$, whose dynamics, correspondingly to the fact that $q$ has a
maximal number of real roots, is known to be trivial: its Julia set is the single point $x=1/2$, the only point where $N'_q$
is zero, that has no counterimages and this point separates the two basins of attraction corresponding to the
two roots. In particular, $N_q$ has no non-trivial cycles.

Now, let $C=\{(x,y)|y>x^2\hbox{ or }x>y^2\}$ (white region in Fig~\ref{fig:qqa1}, top left),
denote the two disjoint connected components of $\Rt\setminus C$ by $A$ (the one contained
in the first quadrant) and $B$, by $C_0$ and $C_1$ the two disjoint connected components
of $C\setminus\fZ_{0,0}$ containing respectively the roots $(0,0)$ and $(1,1)$ and by $D\subset B$
the half-plane below the invariant line $\ell$.
The (formal) counterimages of a point $(x_0,y_0)$ are the four points $w_{m,n} = (x_0+(-1)^m\sqrt{x_0^2-y_0},y_0+(-1)^n\sqrt{y_0^2-x_0})$,
$m,n=0,1$, so that:
\begin{enumerate}
\item $\fN_{0,0}(\Rt)=A\sqcup B$;
\item $\fN_{0,0}(C_1)\subset A$, $\fN_{0,0}(C_0)\subset B$; 
\item three of the counterimages of each point in the interior of $A$ belong to $C$ and one to $A$;
\item the counterimages of each point in the interior of $B$ belong to $B\cup C$ and no more than
  two belong to $C$;
\item the three counterimages $w_{1,0},w_{0,1},w_{1,1}$ of each point $D$ belong to $D$ and the fourth
  to $C_0$.
\end{enumerate}

The branch of $\fZ_{0,0}$ in the first quadrant is the boundary between the two basins, while the other one is tangent
at $[1:1:-2]$ to $\ell$.
Some other component of the Julia set of $\fN_{0,0}$ is plotted in Fig.~\ref{fig:qqa1} top, left.
The counterimage of $\ell$ under $\fN_{0,0}$ has three connected components, namely $\ell$ itself and
the two red curves, one inside $C$ and one inside $A$. The one in $C$ of course has no counterimage while
the one in $A$ has four of them, plotted in purple, three of which inside $A$ and one inside $C$.
Similarly, the branch of $\fZ_{0,0}$ in $C$ has no counterimage while the other one has four, plotted in green,
three of which inside $A$ and one inside $C$. Moreover, two of them (corresponding to the solutions
$w_{00}$ and $w_{11}$) are tangent to the two non-trivial counterimages of $\ell$ while the other two are
tangent to $\ell$.
In the limit, infinitely many components of $\fJ_{0,0}$ will be tangent to $\ell$, forming a shape similar to the delta of
a river on the shore represented by the straight line.
The numerical evidence therefore suggests that, just as in the complex case, $\fJ_{0,0}$ is the $\alpha$-limit of 
$Z_{0,0}$. 

Another way of looking at $\fJ_{0,0}$ is through Theorem~\ref{thm:Bar}.
Let $U$ be a small enough neighborhood of the boundary of $C$ and set $X=\RPt\setminus U$.
Then $\fN_{0,0}(X)\supset X$, since every point inside $U$ has a counterimage
in $A\sqcup B$ (take $w_{0,0}$ for points in $U\cap A$ and $w_{1,1}$ for points in $U\cap B$),
and it is an open map on $X$ because we are away from the zeros of the radicands.
Hence $\lim_{n\to\infty}\fN_{0,0}^{-n}(X)$ defines a repellor for $\fN_{0,0}$ which is exactly the set of points that
do not leave $X$ under $\fN_{0,0}$ (see~\cite{Bar88}). In Fig.~\ref{fig:qqa1}, center, we show the set
$\fN_{0,0}^{-11}(10,-3.6)$, suggesting that the counterimage of a single point, rather than the whole $X$,
can be enough to give the whole $\fJ_{0,0}$. Numerical experiments also show that every point of
$B$ gives rise to similar counterimage sets while the corresponding sets for points in $A$ are empty --
in this case indeed the only counterimage of each point lying in $A$ goes to infinity.
Now, since the Julia set is invariant and numerical evidence suggests that the only attractors of $\fN_{0,0}$
are the roots of $f$, the only points that leave $X$ are those in the basins of attractions of the two roots
and so it must be $\fJ_{0,0}=\lim_{n\to\infty}\fN_{0,0}^{-n}(X)$. Notice that the presence of points of
indeterminacy inside $X$ is not a problem because these singularities can be eliminated
by blow-ups~\cite{HP08}.


A third way is to see $\fJ_{0,0}$ as the invariant compact set of the IFS $\cI$ generated
by the restriction of the maps $\{w_{10},w_{01},w_{11}\}$ to the set $D=\{x+y\leq-1\}\subset B$.
Indeed $D$ is invariant under the action of $\cI$ and
$$
w_{10}(D)\cup w_{01}(D)\cup w_{11}(D) = \fN_{0,0}^{-1}(D)\cap D
$$
so that the Hutchinson operator $H(S)=w_{10}(S)\cup w_{01}(S)\cup w_{11}(S)$, $S\subset D$ associated to this IFS,
based on the discussion in the previous paragraph, has a fixed point given by $K=\lim_{n\to\infty} \fN_{0,0}^{-n}(D)=\fJ_{0,0}\cap D$.
Note that this point of view also suggests that the component of $\fJ_{0,0}$ inside $C$ is simply $w_{00}(K)$,
namely $\fJ_{0,0}=K\sqcup w_{00}(K)$.
\begin{figure}
  \centering
    \includegraphics[width=13.5cm]{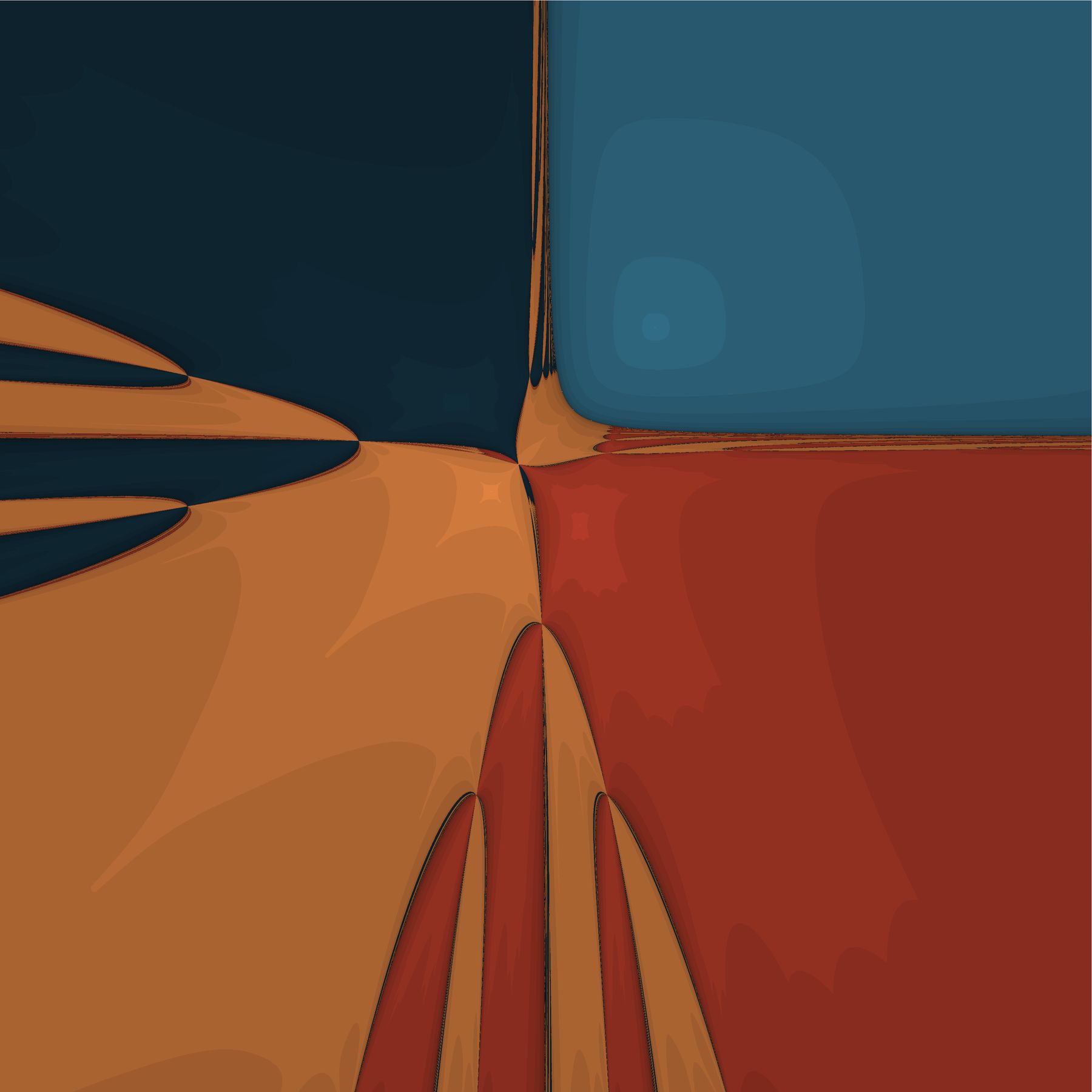}
  \caption{%
    \em
    Basins of attraction of $\fN_{-2,2}$
    in the square $[-10,10]^2$.
    The four real roots of $g$ are the points $(0,0)$, $(2,4)$ and, approximately, $(-1.62,2.62)$ and $(0.62,0.38)$
    and each basin corresponds to a different color. In this picture it is possible to see a qualitative difference among the
    boundary points of the basins: the ones between the orange and the light blue region close to the
    symmetry axis of the basins and the ones close to the nodal point on the same axis are regular,
    namely a small enough neighborhood of them is cut by the boundary in just two regions; all other ones
    instead seem irregular, namely each of their neighborhood intersects infinitely many basins components.
    Darker shades correspond to higher convergence time.
  }
  \label{fig:qqb}
\end{figure}
\begin{figure}
  \centering
  \begin{tabular}{cc}                                                                                                                                                                  
    \includegraphics[width=6cm]{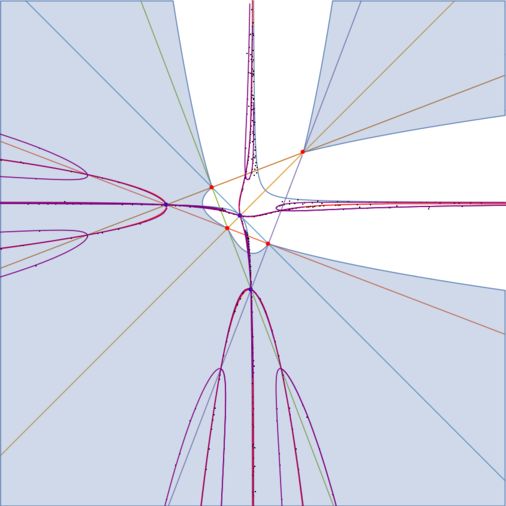}&\includegraphics[width=6cm]{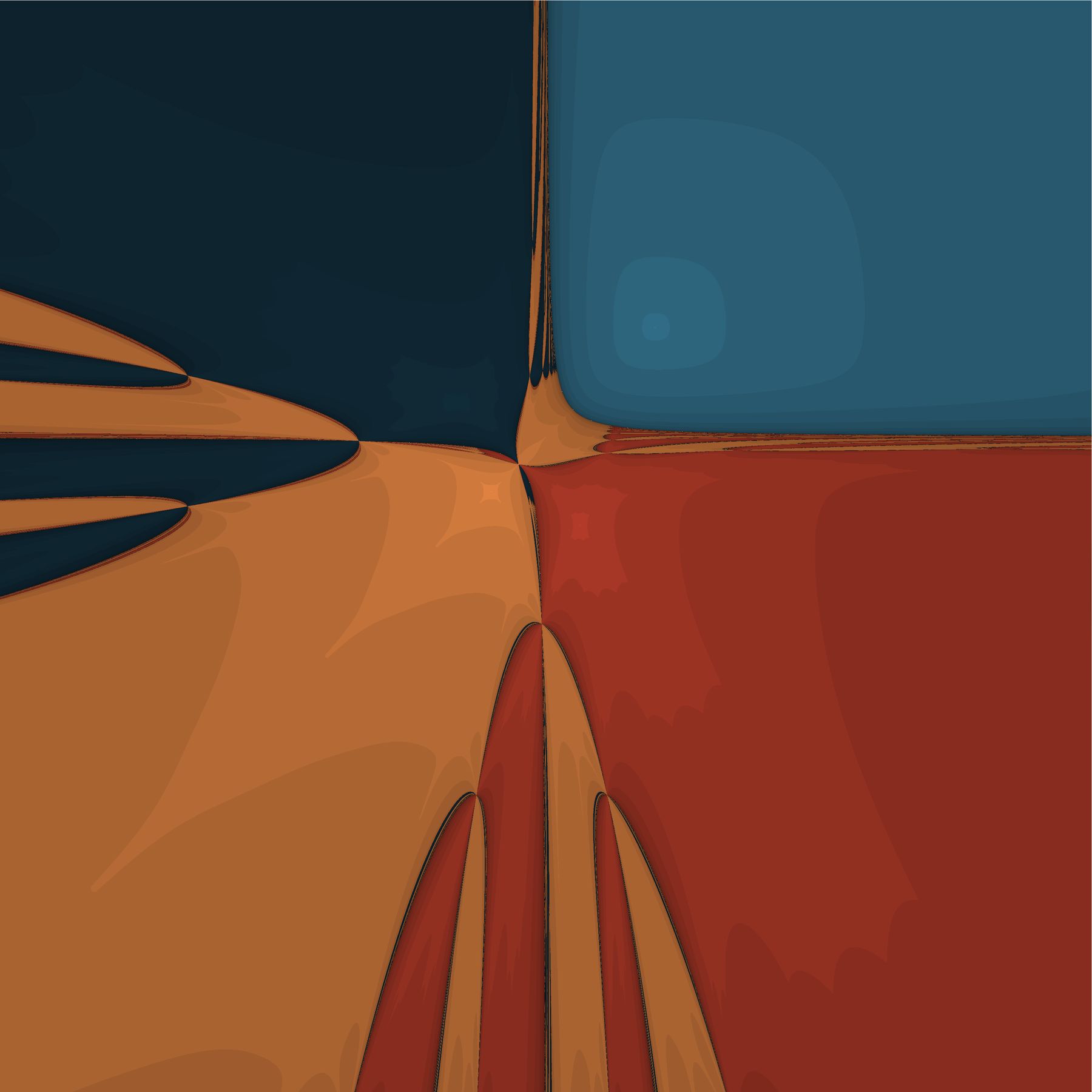}\\
    \includegraphics[width=6cm]{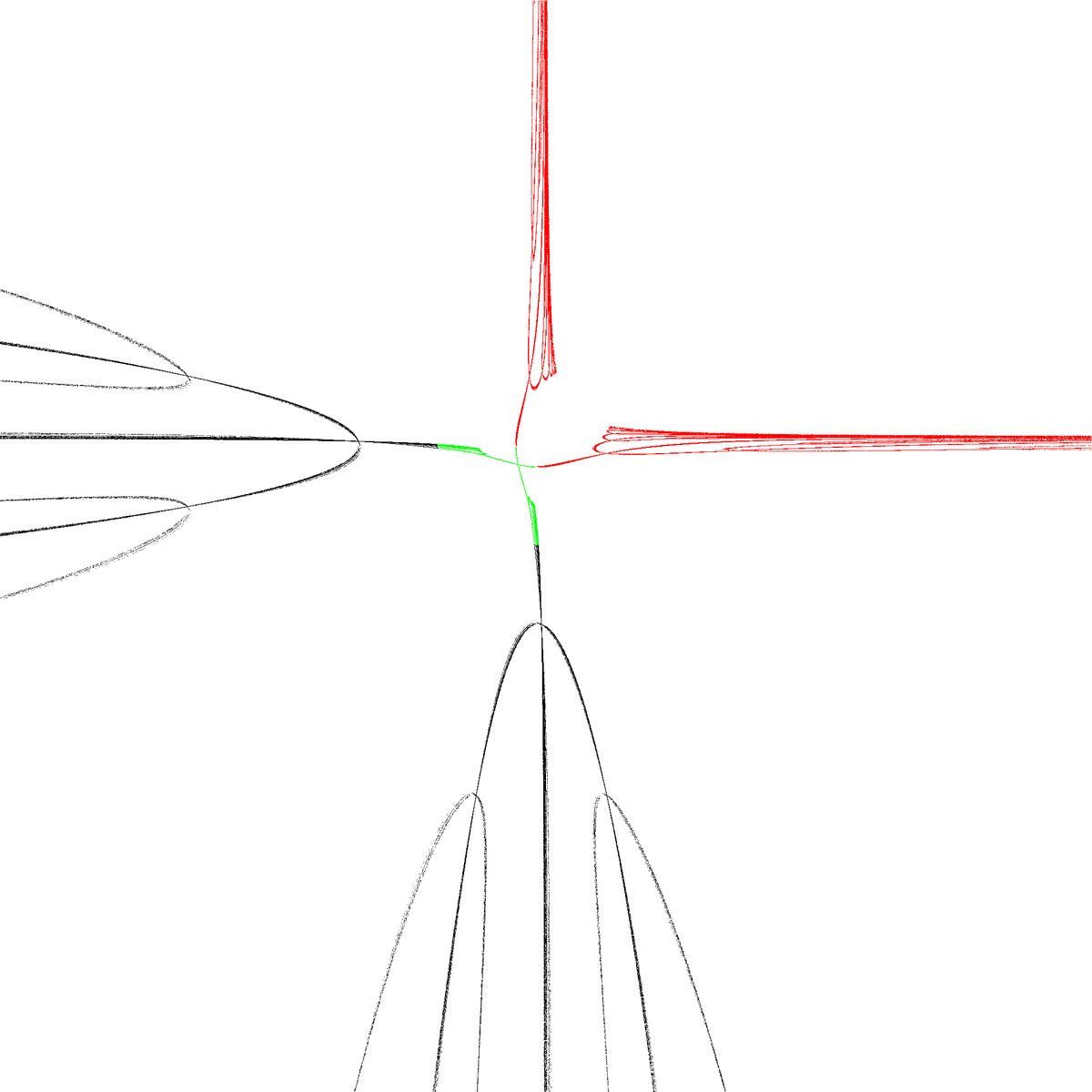}&\includegraphics[width=6cm]{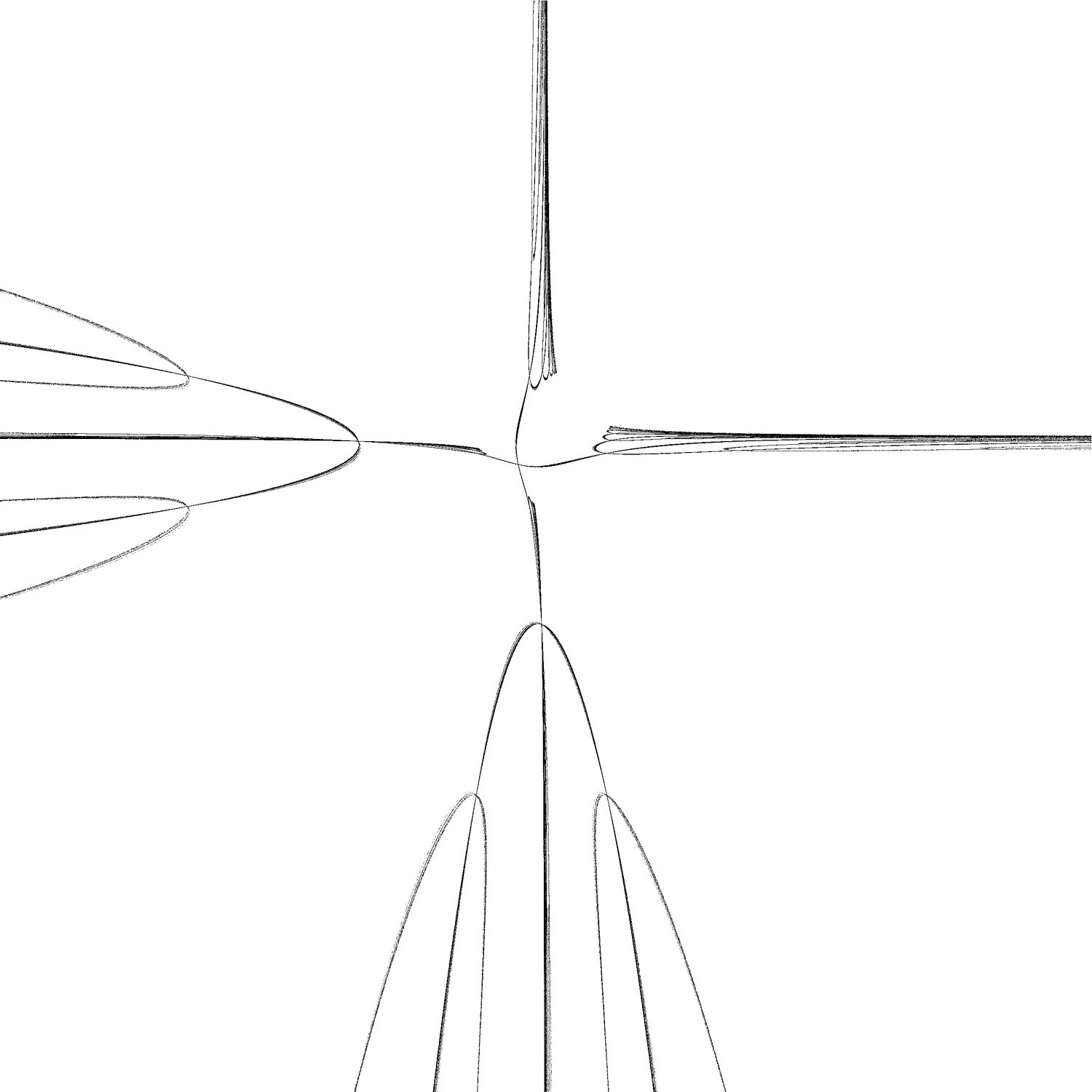}\\
  \end{tabular}
  \caption{%
    \em
    (Top, left) Some important elements of the dynamics of $\fN_{-2,2}$ in the square $[-10,10]^2$:
    fixed points (red), indeterminacy points (blue), the six invariant lines, the set $Z_{-2,2}$ (light blue)
    and its first two counterimages (resp. purple and red). The shaded region is $\fN_{-2,2}(\Rt)$ and
    the black points are the counterimages via $\fN_{-2,2}$ of a single point of the plane
    up to the 5th level of iteration.
    (Top, right) Basins of $\fN_{-2,2}$ in the square $[-10,10]^2$.
    (Bottom, left) First $3\cdot10^5$ points of a random backward orbit of a point below the light
    blue invariant line under the three branches $w_{1,0},w_{0,1},w_{1,1}$ of $\fN_{-2.2}$ (in black).
    Notice that there is a small open set below the line, when the orbits falls there we paint the point
    in green and choose a different counterimage. The red points are the image of the black points
    under $w_{0,0}$.  (Bottom, right) This picture shows the set $\fN_{-2,2}^{-13}(-20,-1.4)$. These two bottom
    pictures strongly suggest that through $\alpha$-limits we can only get the set of irregular points of the
    Julia set.
  }
  \label{fig:qqb1}
\end{figure}

Again, by Theorem~\ref{thm:Bar}, this is precisely the set of all  namely the set of points
that do not converge to $(0,0)$. The picture of the Julia set in black \& white in Fig.~\ref{fig:qqa1}(center, left)
has been obtained by plotting the first $3\cdot10^5$ points of a random backward orbit of the point $(3,-3.6)$
and the picture does not appear to depend on the initial point taken within the set $B$. On the contrary, backward
orbits starting in $A$ diverge to infinity. { We are not aware
of any systematic investigation of this fractal and, in particular, it is unknown whether its measure
is zero or not and what its Hausdorff dimension might be.}

\subsection{The map $\boldsymbol{\fN_{-2,2}}$}
The map $\ff_{-2,2}$
has four roots, namely the points $p_1=(-1,1)$, $p_2=(2,4)$, $p_3\simeq(-1.62,2.62)$ and
$p_4\simeq(0.62,0.38)$. Its Newton map is
$$
\fN_{-2,2} = \left(\frac{2x^2y+y^2-4x^2-1}{4xy-8x-1},x\frac{2y^2+x-4}{4xy-8x-1}\right)\,,
$$
namely, in homogeneous coordinates,  
$$
[x:y:z]\to[2x^2y+y^2z-4x^2z-z^3:x(2y^2+xz-4z^2):z(4xy-8xz-z^2)].
$$
In this case we have five points of indeterminacy: three bounded, namely $[-1,3,2]$, $[-7-3\sqrt{5}:1+3\sqrt{5}:4]$
and $[-7+3\sqrt{5}:1-3\sqrt{5}:4]$, and two unbounded, namely $[1:0:0]$ and $[0:1:0]$.

Just like in the previous case, the restriction of $\fN_{-2,2}$ to the circle at infinity is the identity, where it is defined,
and all these fixed points are repelling in the direction orthogonal to the circle at infinity.
Corresponding to the 4 roots of $\ff_{-2,2}$, $\fN_{-2,2}$ has 6 invariant lines on which it restricts to the Newton map
of some quadratic polynomial in one variable (see Fig~\ref{fig:qqb1} top, left). As for $\ff_{0,0}$, we set
$\ff_{-2,2}(\Rt)=A\sqcup B$, with
$A$ the component contained in the first quadrant. We denote by $C_0$ and $C_1$ the two disjoint
connected components of $C\setminus\fZ_{-2,2}$ containing respectively the roots $(-1,1)$ and $(2,4)$
and by $H$ the half-plane below the line $x+y=1$, joining the roots $p_2$ and $p_3$.
The four (formal) preimages of a point $(x_0,y_0)$ are given by
$w_{m,n} = (x_0+(-1)^m\sqrt{x_0^2-y_0},y_0+(-1)^n\sqrt{(y_0-2)^2-x_0+2})$,
$m,n=0,1$, and satisfy properties similar to those of $f$, namely:
\begin{enumerate}
\item $\fN_{-2,2}(\Rt)=A\sqcup B$;
\item $\fN_{-2,2}(C_1)\subset A$, $\fN_{-2,2}(C_0)\subset B$; 
\item three of the counterimages of each point in the interior of $A$ belong to $C$ and one to $A$;
\item three of the counterimages of each point of the interior of $B$ belong to $B\cup H$ and one to $C$.
\end{enumerate}
In Fig.~\ref{fig:qqb1}, top left, we show the four roots (in red) together with the lines joining them, the three bounded
indeterminacy points (in blue), the region $C$ (in white), the hyperbola $\fZ_{-2,2}$ (in blue) and its first two
counterimages (respectively in red and purple) under $N_f$. The numerical evidence suggests
that the $\alpha$-limit of the set of points where $D\fN_{-2,2}$ is degenerate coincides with the set
of irregular points of $\fJ_{-2,2}$.

Now let $U$ be a small enough neighborhood of $C$ and set $X=\RPt\setminus U$. Then,
similarly to what happens for $\fN_{0,0}$, $\fN_{-2,2}(X)\supset X$ and the restriction of $\fN_{-2,2}$ to $X$ is an open map.
Hence $\lim_{n\to\infty}\fN_{-2,2}^{-n}(X)$ defines a repellor for $\fN_{-2,2}$ which is exactly the set of points that
do not leave $X$ under $\fN_{-2,2}$. In Fig.~\ref{fig:qqb1}, bottom right, we show the set $\fN_{-2,2}^{-13}(-20,-1.4)$,
suggesting that this repellor coincides with the set of irregular points of $\fJ_{-2,2}$. As in the previous case,
The counterimages of any point in $B$ give similar results while the counterimages of points in $A$
go to infinity.

Similarly to the previous case, we can get the set of irregular points of  $\fJ_{-2,2}$ as the invariant compact
set of the IFS $\cI$ generated by the restriction of the maps $\{w_{10},w_{01},w_{11}\}$ to the set
$D=\{x+y\leq1\}\subset B$. In this case though $\cI$ is not, strictly speaking, a well-defined IFS
on $D$ since there is a bounded open subset $N\subset D$ where the maps are not defined. Nevertheless it
is reasonable to believe, supported by numerical experiments, that the concept of IFS can be extended
to this type of maps and that a unique compact subset still exist. In Fig.~\ref{fig:qqb1}, bottom left,
we show in black the first $3\cdot10^5$ points of a random backward orbit of $(3,-3.6)$ under $\cI$;
the green points are points that fell , during the backward iteration, in $N$, in which case we select
a different random counterimage in order to keep going backwards; the red points are the image of the black ones
under the map $w_{00}$. Numerical experiments suggest that the limit of such backward orbits do not
depend on the initial point inside $D\setminus N$.
\begin{figure}
  \centering
    \includegraphics[width=13.5cm]{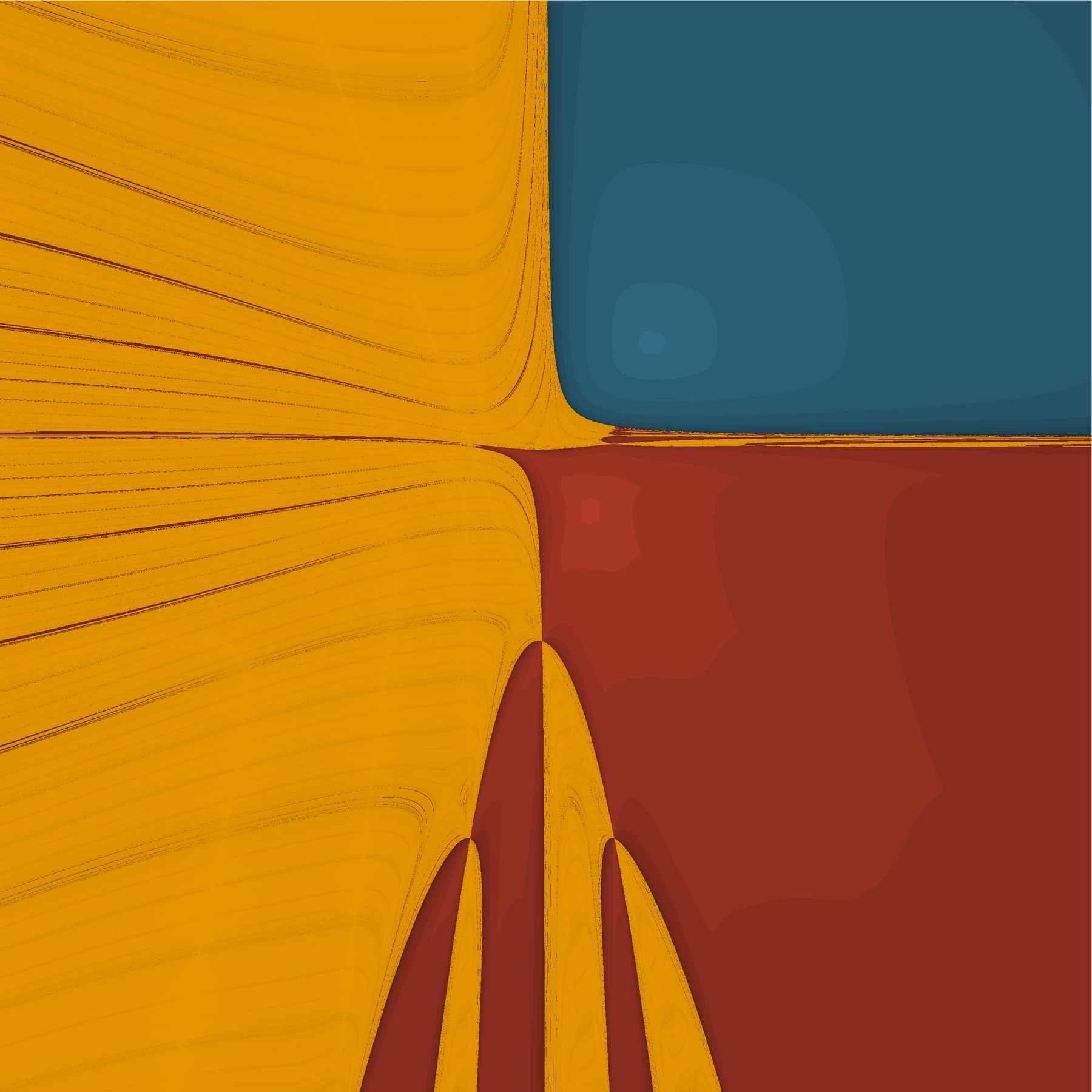}
  \caption{%
    \em                                                                                                                                                                            
    Basins of attraction of $\fN_{-1,2}$
    in the square $[-10,10]^2$.
    The two real roots of $\ff_{-1,2}$ are the points $p_1\simeq (1.9,3.7)$ and $p_2\simeq(0.81,0.65)$
    and the corresponding basins have been colored in cyan and red respectively.
    The basin in gold correspond to a chaotic attractor on the ghost line of $\fN_{-1,2}$.
    Darker shades correspond to higher convergence time.
  }
  \label{fig:qq2b}
\end{figure}
\begin{figure}
  \centering
  \begin{tabular}{cc}                                                                                                                                                                  
    \includegraphics[width=6cm]{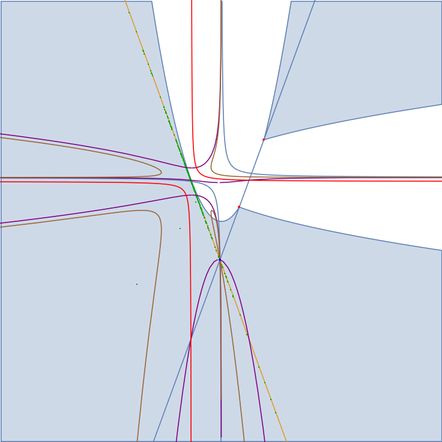}&\includegraphics[width=6cm]{qq2b}\\
    \includegraphics[width=6cm]{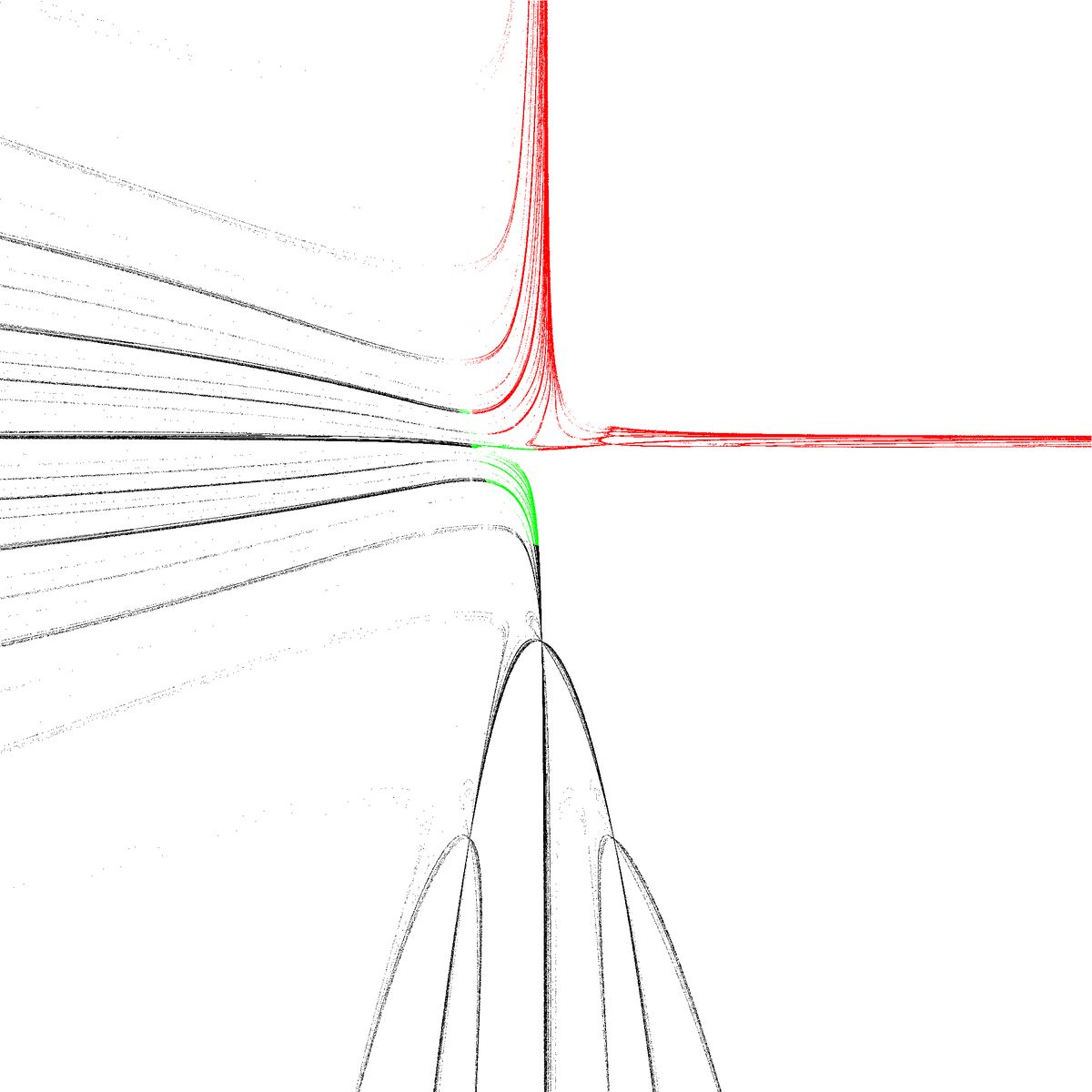}&\includegraphics[width=6cm]{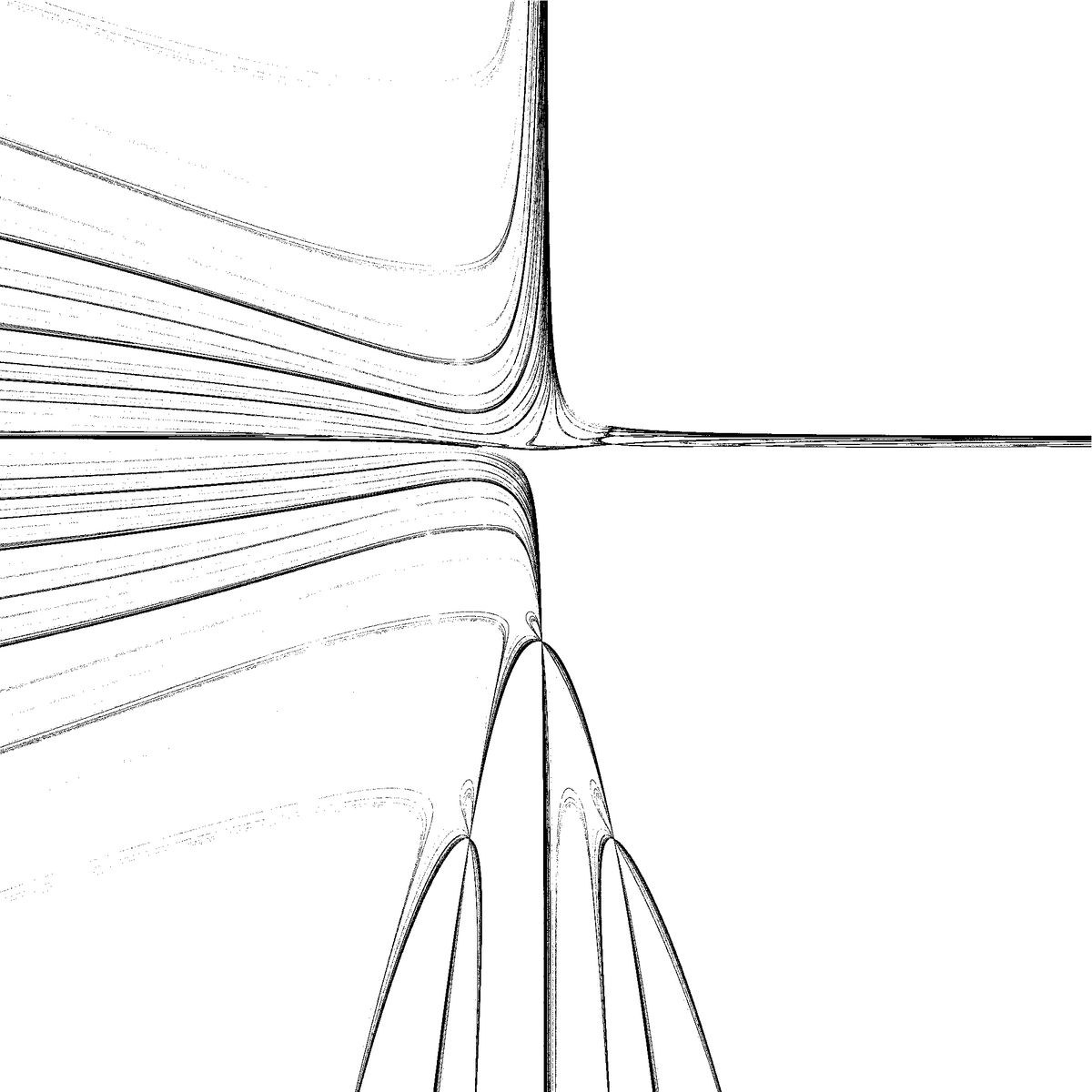}\\
  \end{tabular}
  \caption{%
    \em
    (Top, left) Main elements in the dynamics of $\fN_{-1,2}$: fixed points (red), point of indeterminacy (blue),
    invariant line joining the two roots and ghost line with its first and second counterimages (purple and gray),
    the set $\fZ_{-1,2}$ (light blue) with its first and second counterimages (red and brown) and first 500 points
    of the orbit of a random point in the black region (green). The green points suggest the existence of an
    attracting Cantor set in the ghost line.
    (Top, right) Basins of attraction of $\fN_{-1,2}$ in the square $[-10,10]^2$.
    (Bottom, left) First $3\cdot10^5$ points of a random backward orbit of a point below the light
    blue invariant line under the three branches $w_{1,0},w_{0,1},w_{1,1}$ of $N_h$ (in black).
    Notice that there is a small open set below the line, when the orbits falls there we paint the point
    in green and choose a different counterimage. The red points are the image of the black points
    under $w_{0,0}$.  (Bottom, right) This picture shows the set $\fN_{-1,2}^{-15}(-10,-0.23)$. These two bottom
    pictures suggest that through $\alpha$-limits we can only get the set of irregular points of the
    Julia set.
  }
  \label{fig:qq2b1}
\end{figure}
%
\subsection{The map $\boldsymbol{\fN_{-1,2}}$}
\label{ss:h}
The Newton map
$$
\fN_{-1,2}(x,y) = \left(\frac{2 x^2 y + y^2 - 4 x^2 -3 }{4xy-8x-1},x\frac{2 y^2+x-6}{4xy-8x-1}\right)
$$
shares many characteristics with $\fN_{0,0}$. 
Like $\fN_{0,0}$, $\fN_{-1,2}$ has two attracting fixed points $p_1\simeq(-1.9,3.7)$ and
$p_2\simeq(0.81,0.65)$, corresponding to the two roots of $\ff_{-1,2}$ and three points of indeterminacy:
one bounded, approximately $[-0.067:-1.7:1]$, and two unbounded, namely $[1:0:0]$ and $[0:1:0]$.
The restriction of $\fN_{-1,2}$ to the circle at infinity is the identity, where defined, and all these fixed points are
repelling in a direction transversal to the circle at infinity. Even in this case, $\fN_{-1,2}(\Rt)=A\sqcup B$
and $Z_{-1,2}$ divides $C=\Rt\setminus\fN_{-1,2}(\Rt)$ into two open sets $C_0$ and $C_1$ so that
$\fN_{-1,2}(C_1)\subset A$, $\fN_{-1,2}(C_0)\subset B$
and the half-space $D$ below the ghost line is invariant under the first three of the inverse branches
$$
w_{m,n} = (x_0+(-1)^m\sqrt{x_0^2-y_0},y_0+(-1)^n\sqrt{(y_0-2)^2-x_0-1})\,,
$$
with $m,n=0,1$.

Unlike $\fN_{0,0}$, though, $\fN_{-1,2}$ shows evident signs of the presence of a third attractor $\cC$,
whose basin is shown in gold color in Fig.~\ref{fig:qq2b}, lying on its ghost line. Numerics suggest
that the dynamics on $\cF(\cC)$ is chaotic (namely highly dependent on the initial point).
In particular, this means that $\cF(\cC)\subset \fJ_{-1,2}$, suggesting
furthermore that $\fJ_{-1,2}$ has non-empty interior and so a non-zero measure. Consistently with Conjecture~\ref{conj:alpha}, 
the numerical evaluation of the $\alpha$-limits of points in $B$ (Fig~\ref{fig:qq2b1} bottom, right) and of the invariant set of the
IFS generated by the maps $w_{1,0},w_{0,1},w_{1,1}$ (bottom, left) suggests that they are both equal to the set of non-regular points
of the boundary of $\fJ_{-1,2}$.

\bigskip
\begin{figure}
  \centering
  \begin{tabular}{cccc}                                                                                                                                                                  
    \includegraphics[width=3cm]{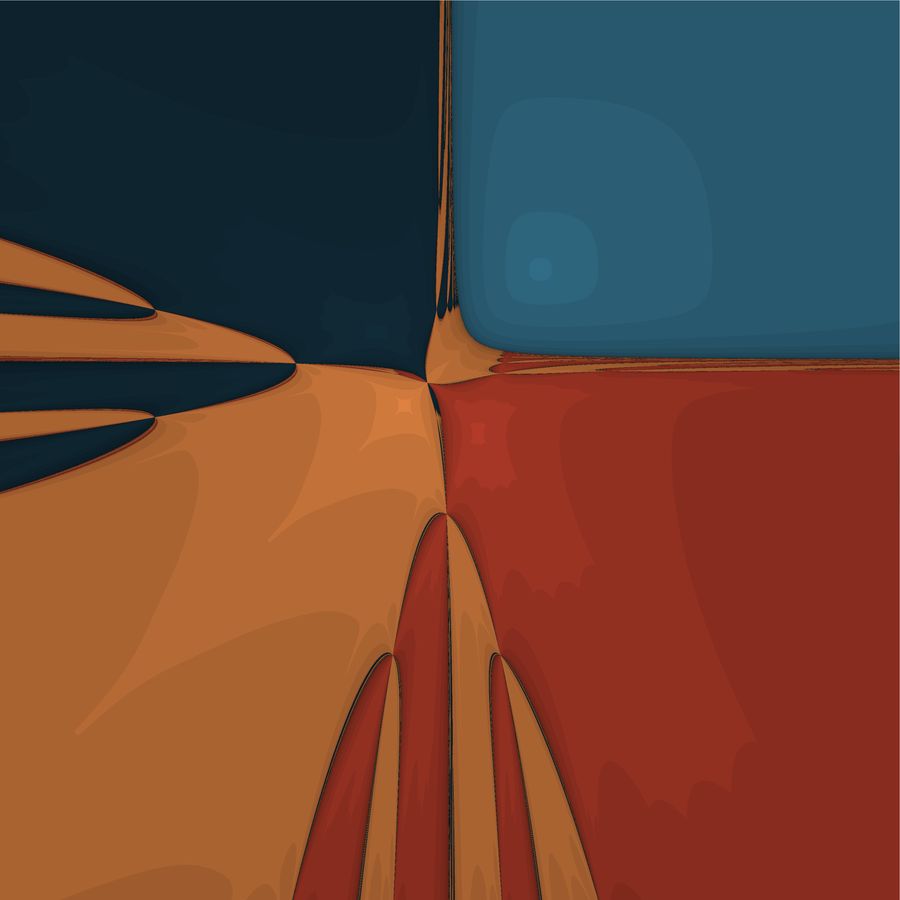}&\includegraphics[width=3cm]{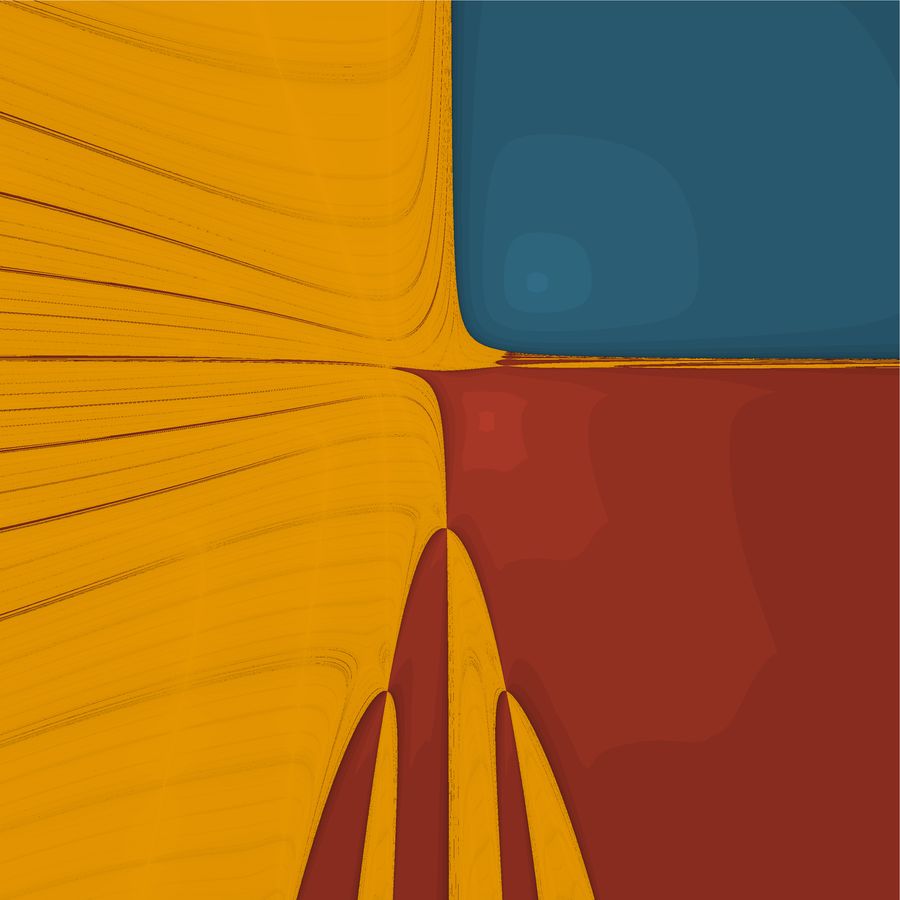}&\includegraphics[width=3cm]{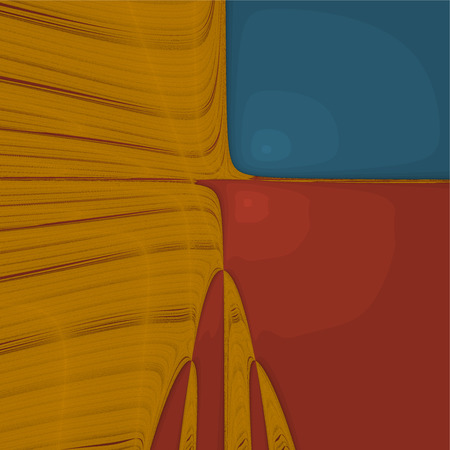}&\includegraphics[width=3cm]{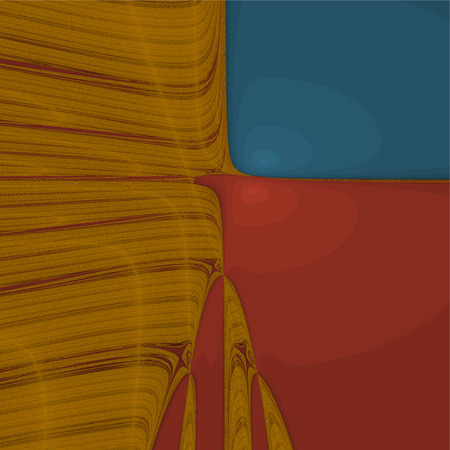}\\
    \includegraphics[width=3cm]{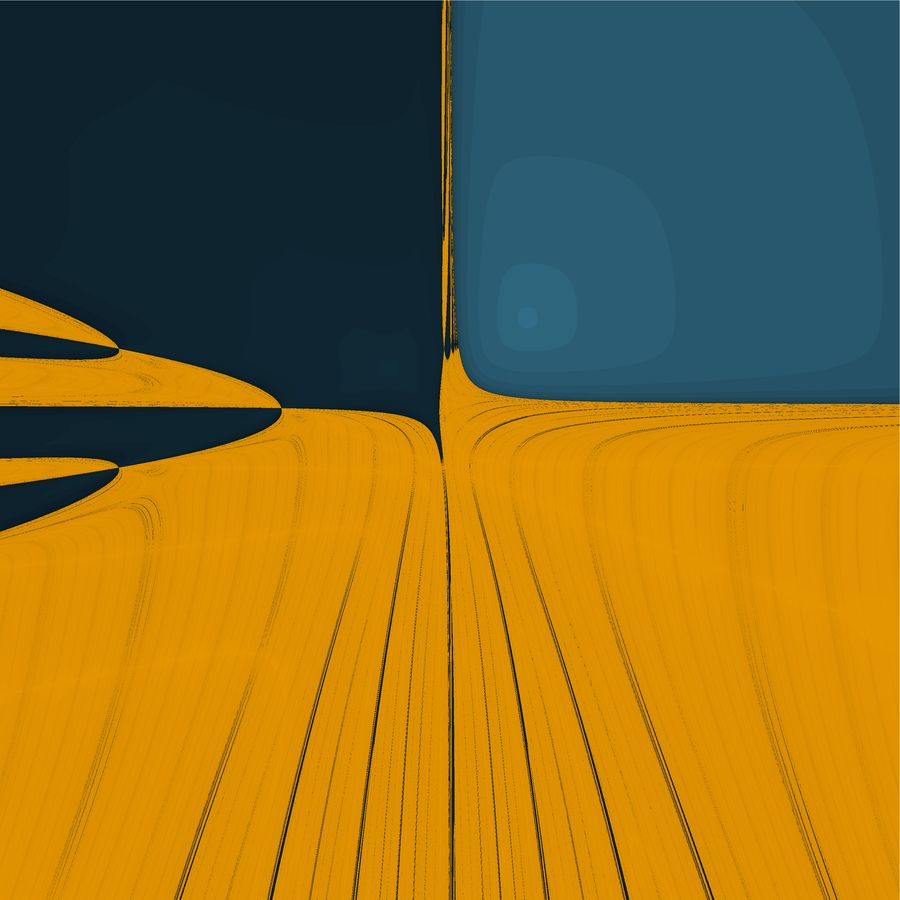}&\includegraphics[width=3cm]{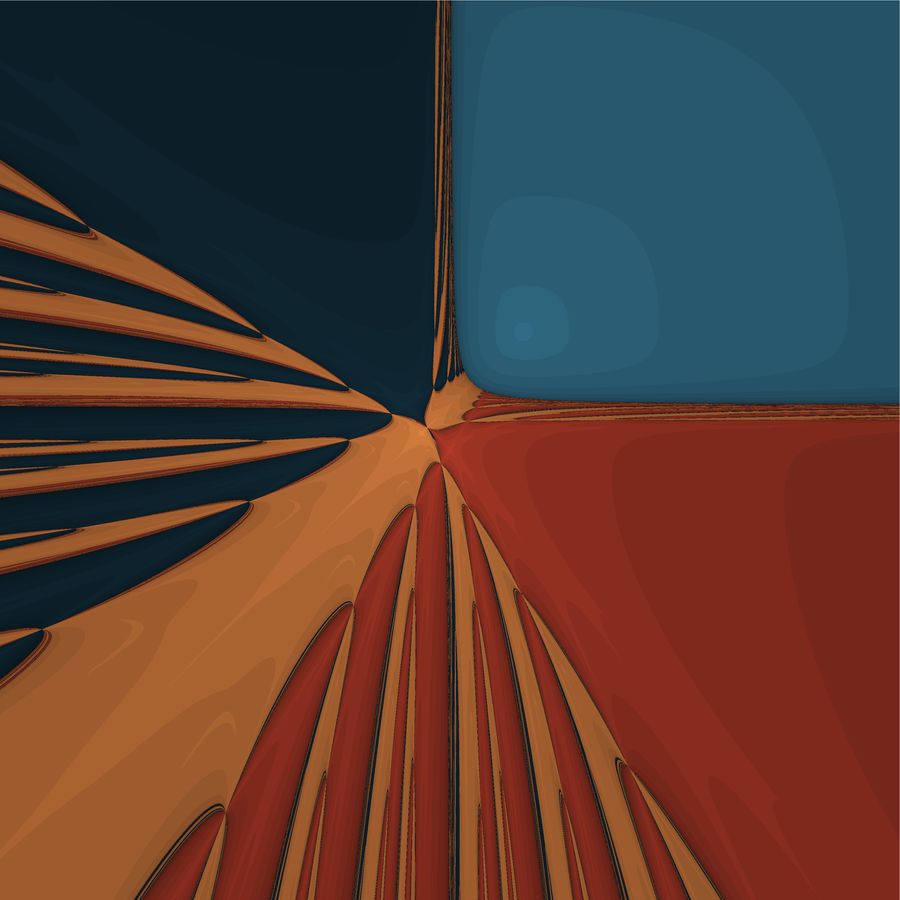}&\includegraphics[width=3cm]{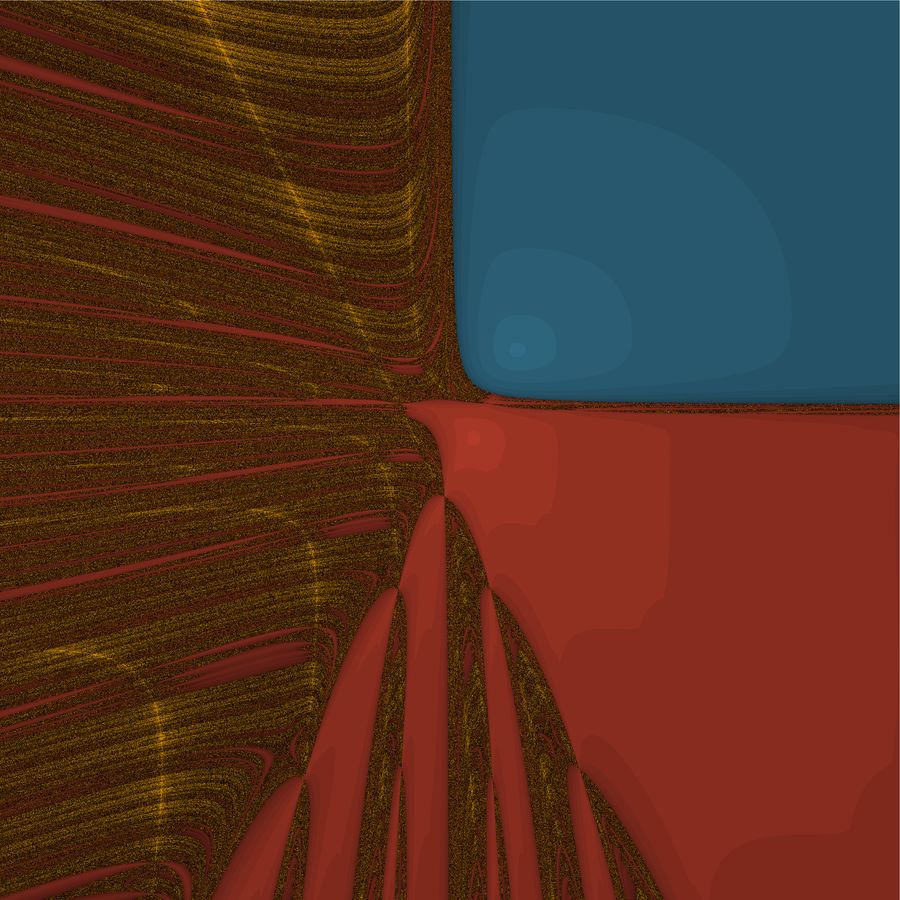}&\includegraphics[width=3cm]{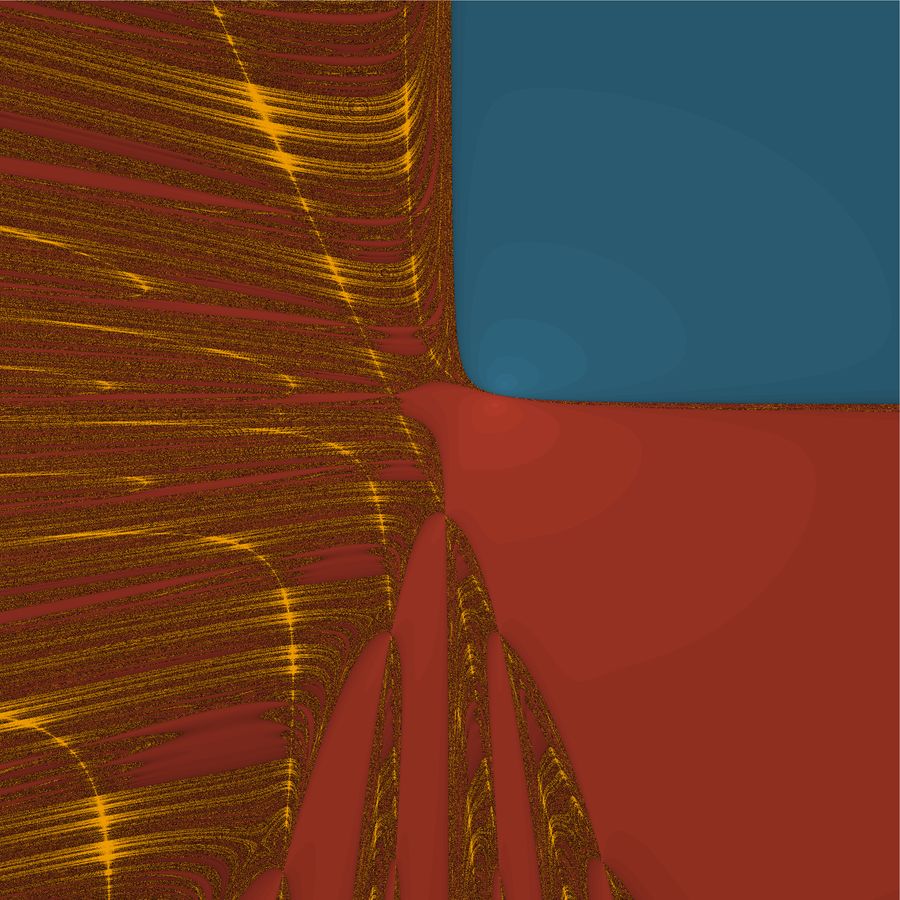}\\
    \includegraphics[width=3cm]{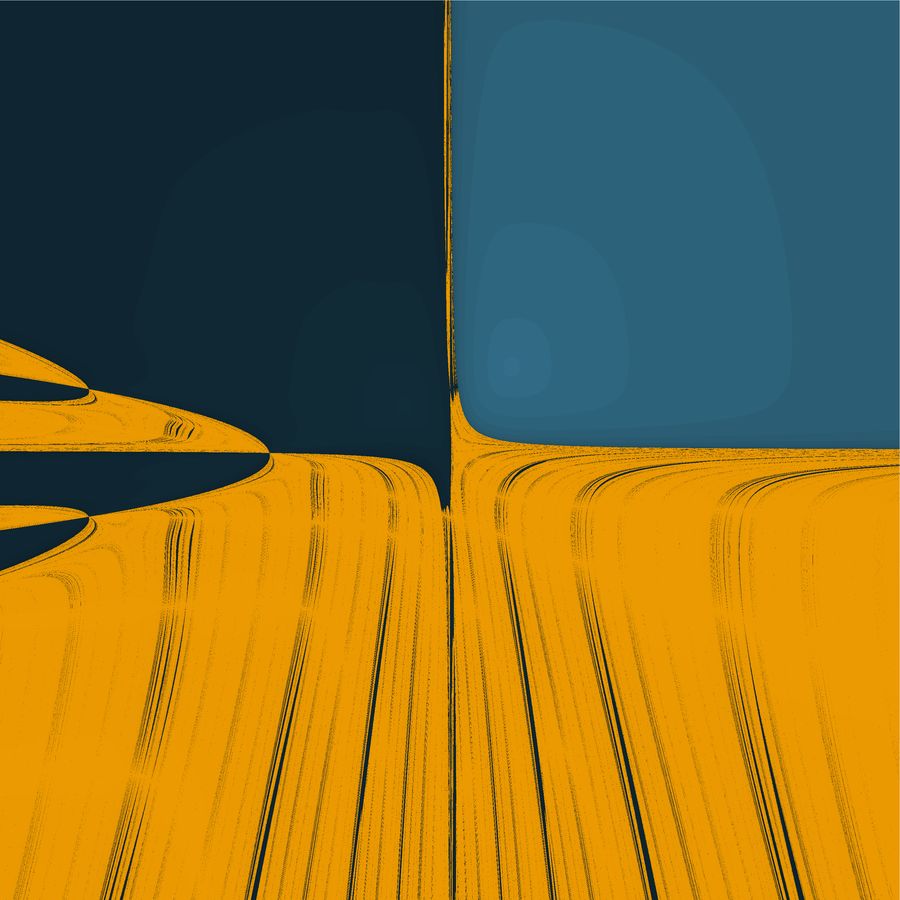}&\includegraphics[width=3cm]{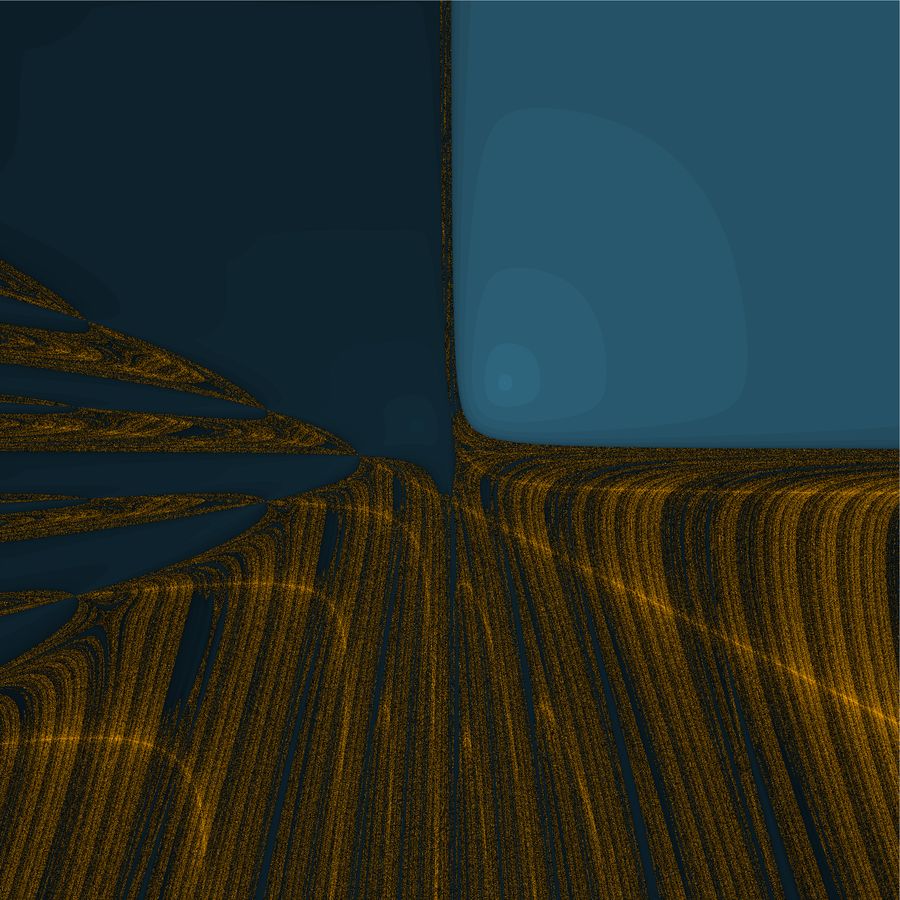}&\includegraphics[width=3cm]{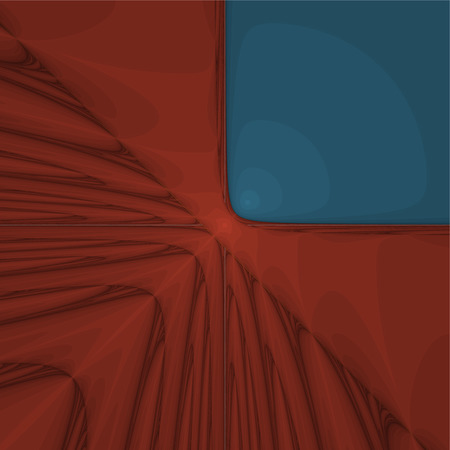}&\includegraphics[width=3cm]{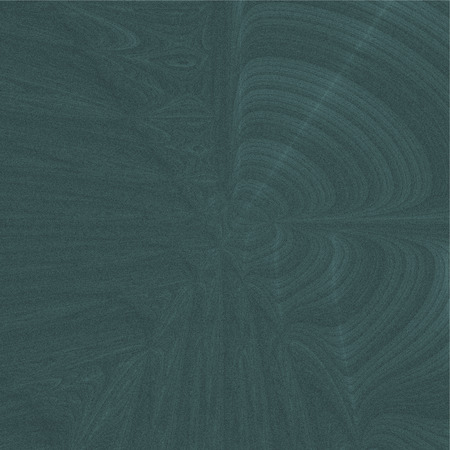}\\
    \includegraphics[width=3cm]{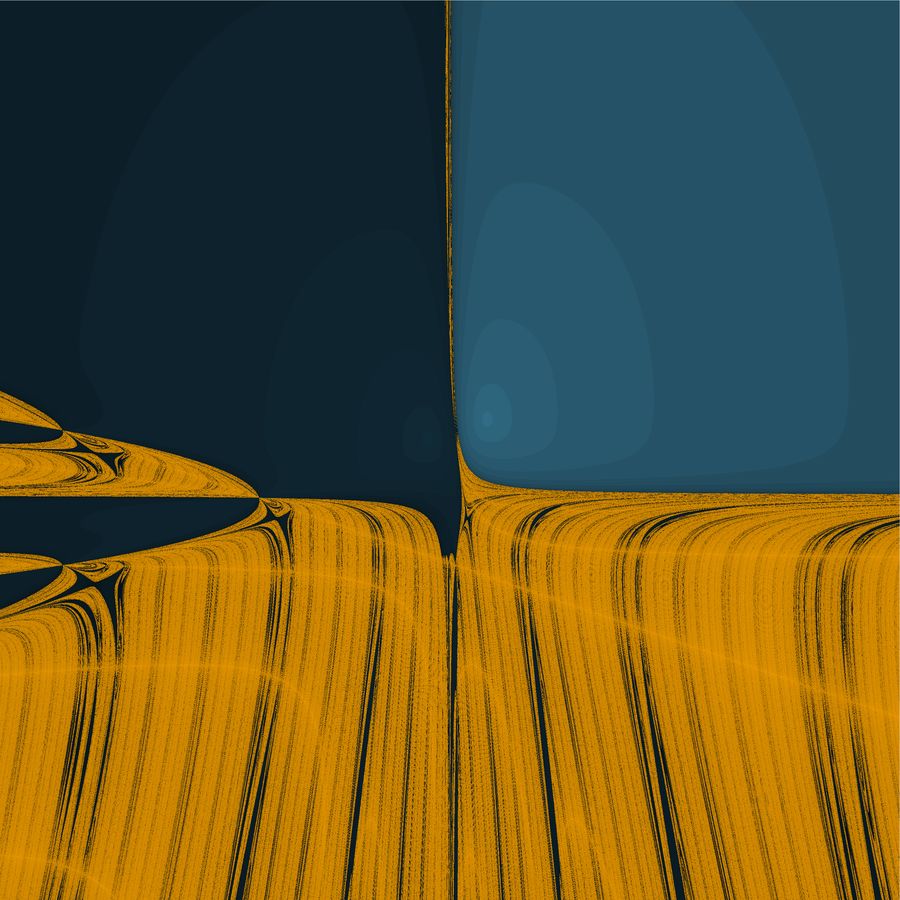}&\includegraphics[width=3cm]{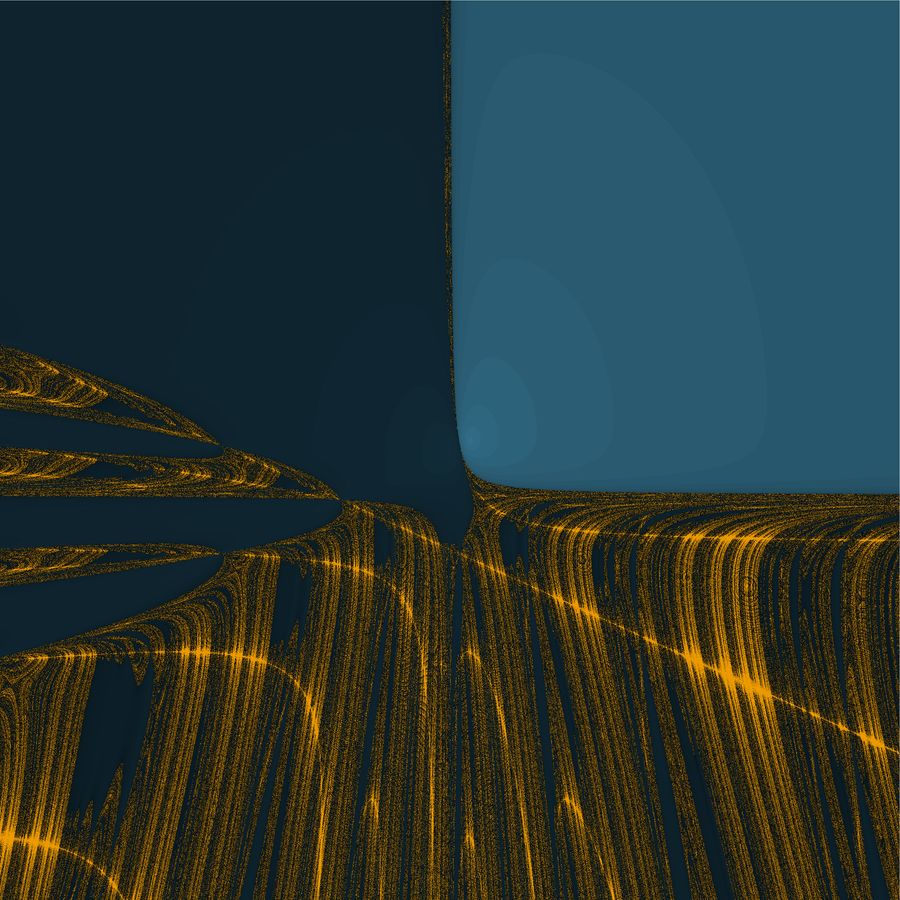}&\includegraphics[width=3cm]{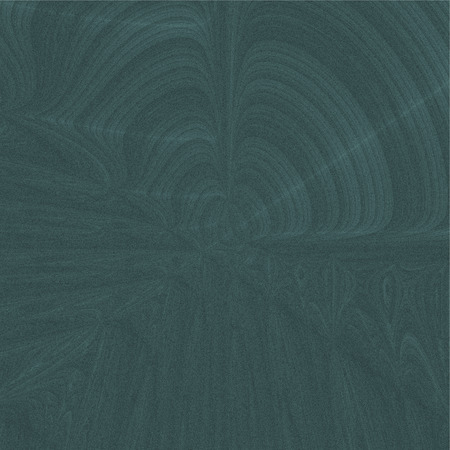}&\includegraphics[width=3cm]{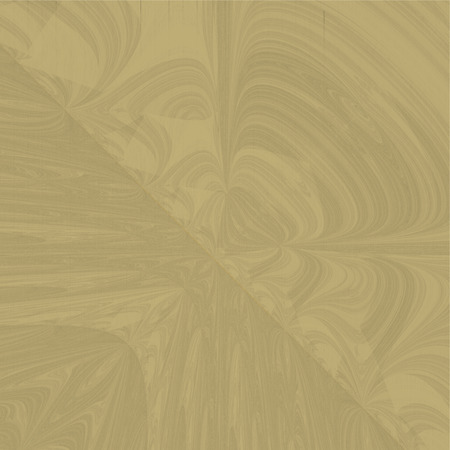}\\
    \includegraphics[width=3cm]{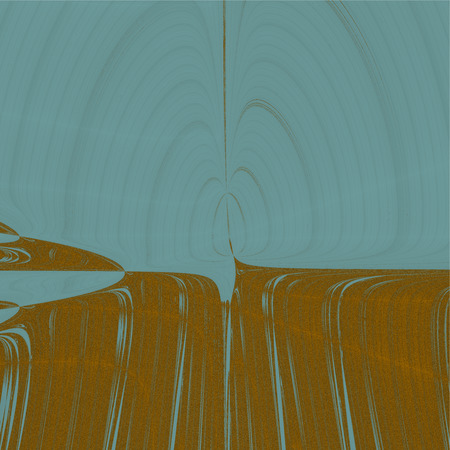}&\includegraphics[width=3cm]{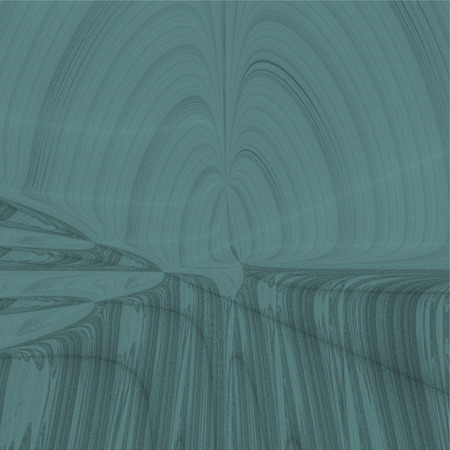}&\includegraphics[width=3cm]{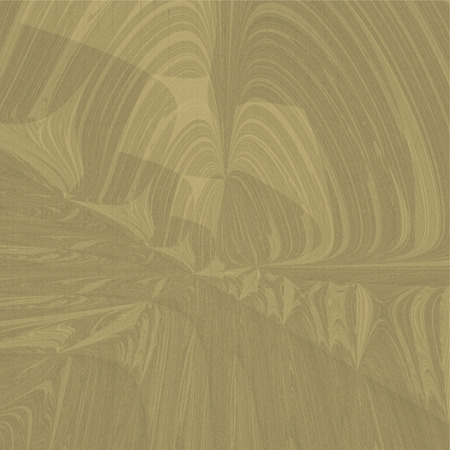}&\includegraphics[width=3cm]{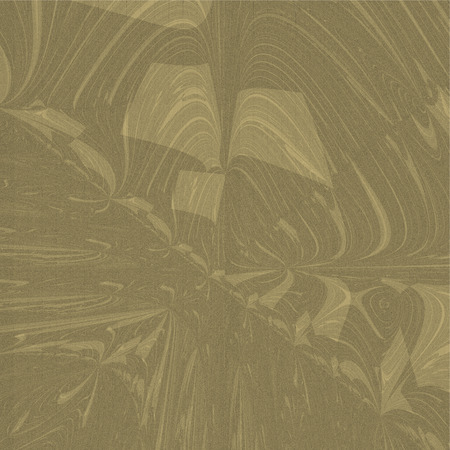}\\
  \end{tabular}
  \caption{%
    \em
    Basins of attraction for $\fN_{x_0,y_0}$ corresponding to the values $x_0=-2,-1,0,1$ and $y_0=-2,-1,0,1,2$.
    The numerical results strongly suggest that the union of the basins of attraction of the fixed points
    corresponding to the roots of $\ff_{x_0,y_0}$ has full Lebesgue measure when the map has four roots
    and that chaotic attractors, some of which lying on the map's ghost lines, arise when the number of roots
    of $\ff_{x_0,y_0}$ is not maximal. Darker shades correspond to higher convergence time.
  }
  \label{fig:qq2}
\end{figure}
\begin{figure}
  \centering
  \begin{tabular}{cccc}                                                                                                                                                                  
    \includegraphics[width=3cm]{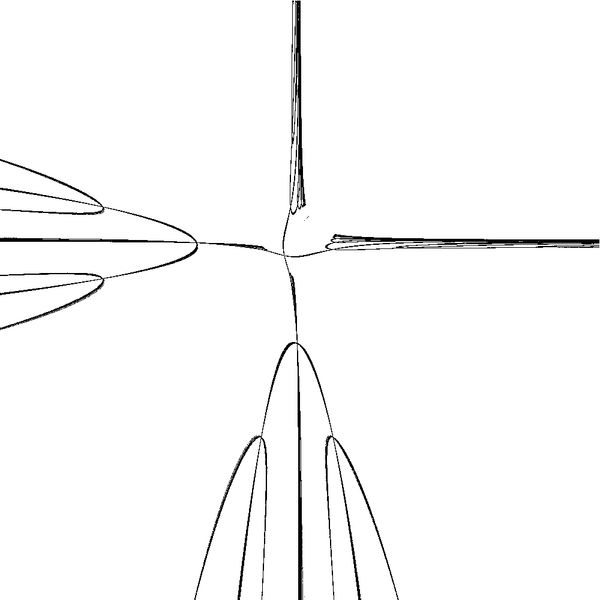}&\includegraphics[width=3cm]{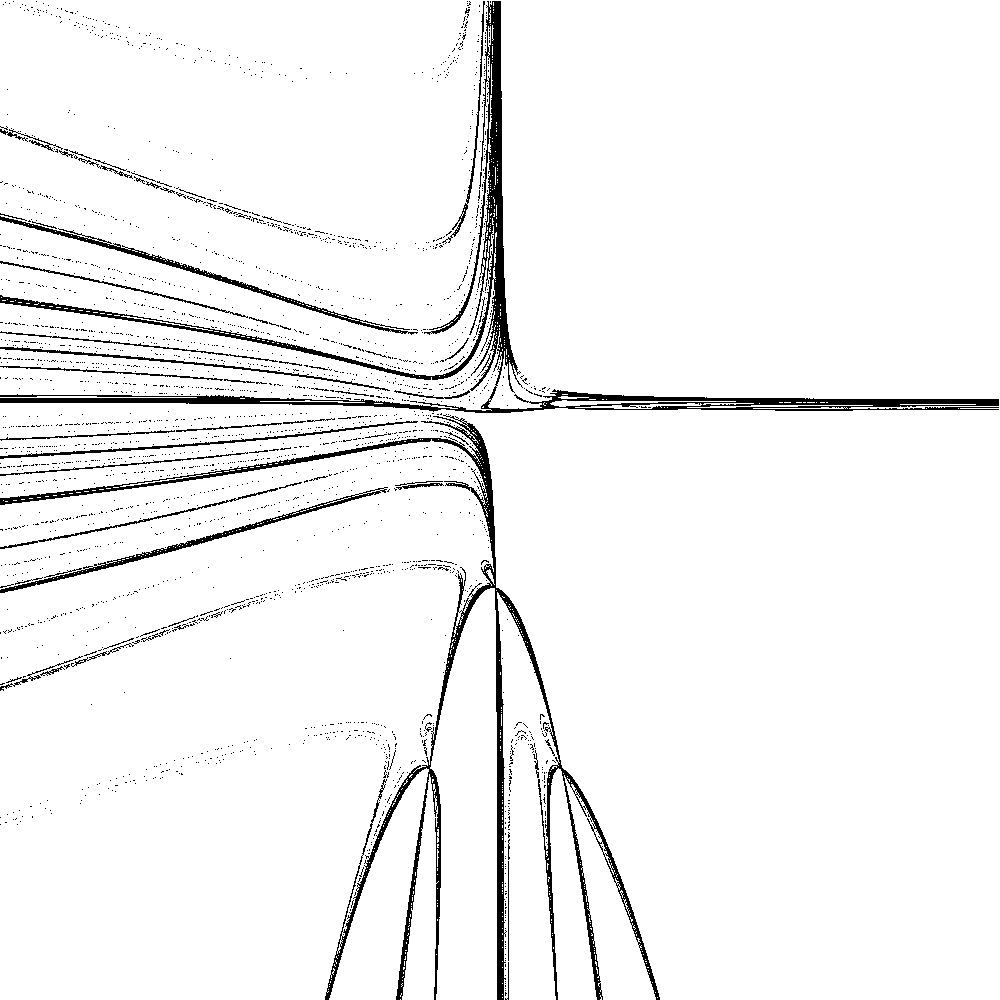}&\includegraphics[width=3cm]{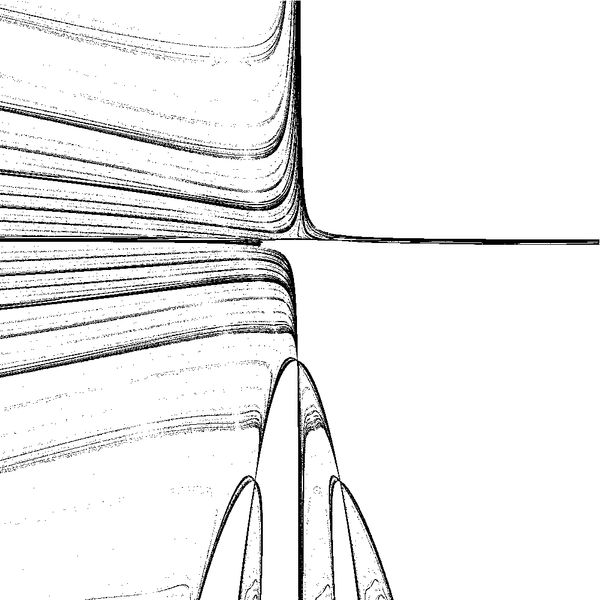}&\includegraphics[width=3cm]{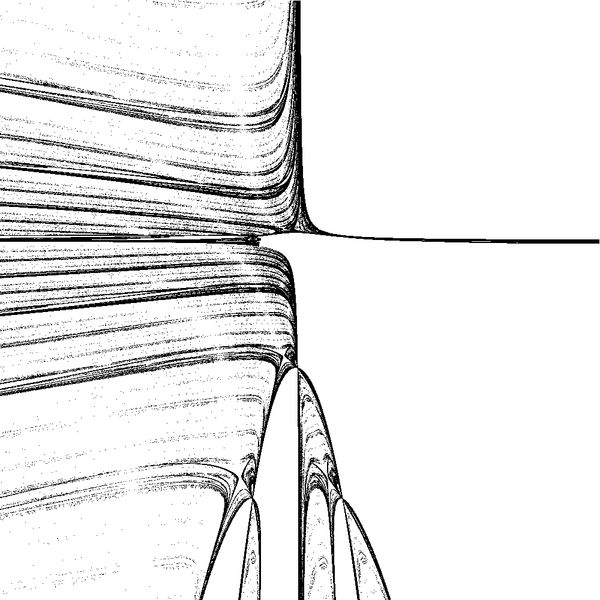}\\
    \includegraphics[width=3cm]{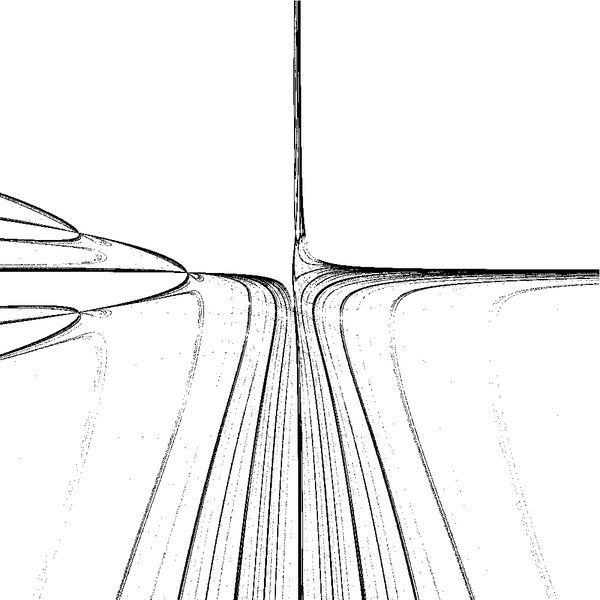}&\includegraphics[width=3cm]{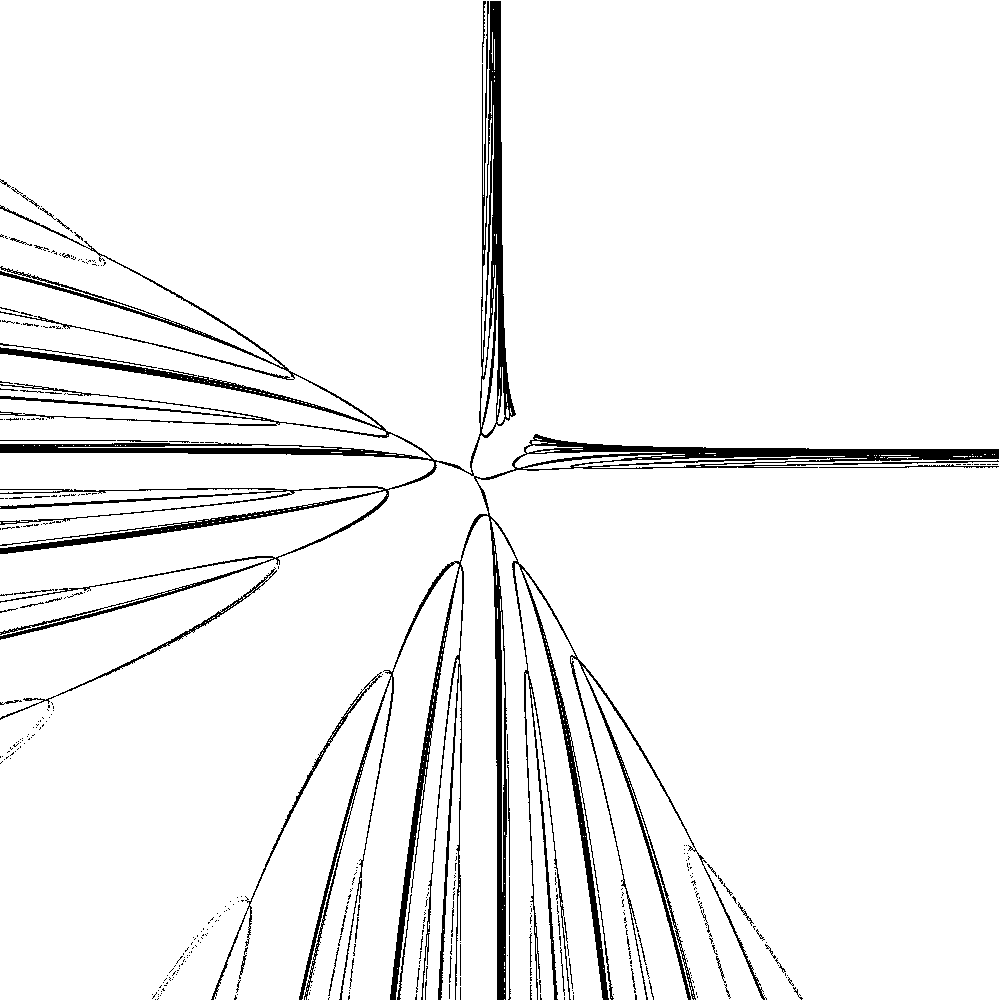}&\includegraphics[width=3cm]{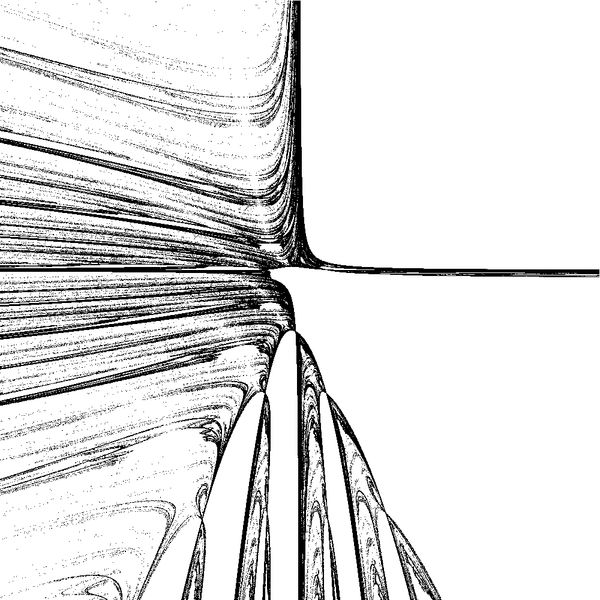}&\includegraphics[width=3cm]{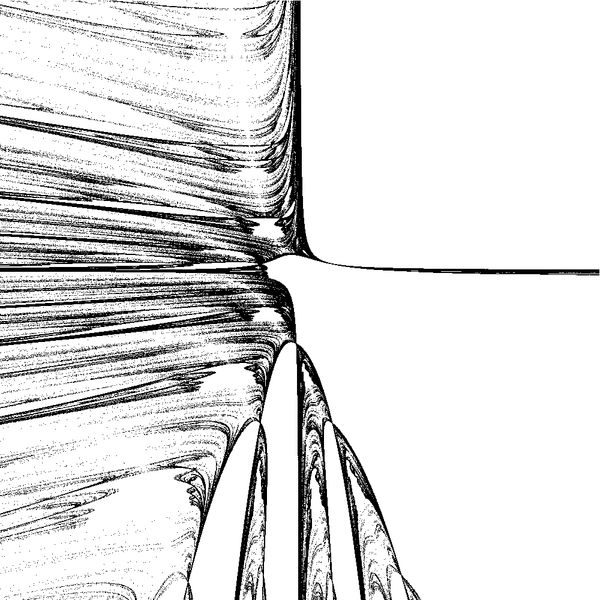}\\
    \includegraphics[width=3cm]{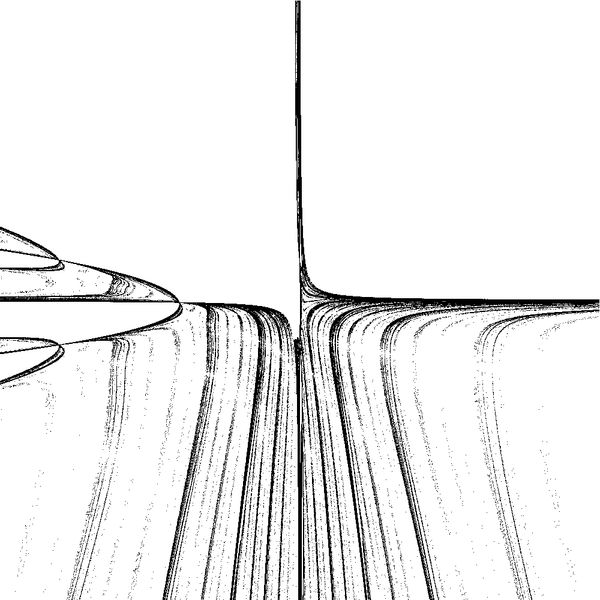}&\includegraphics[width=3cm]{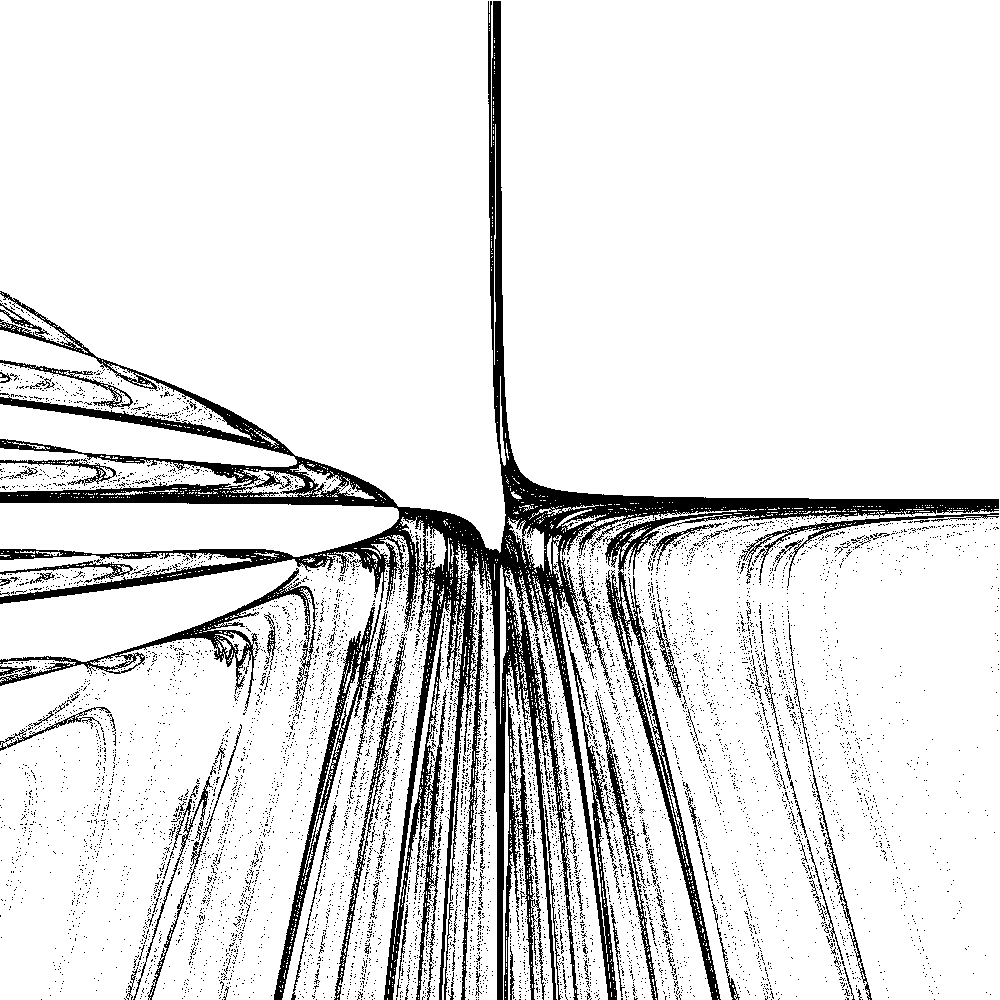}&\includegraphics[width=3cm]{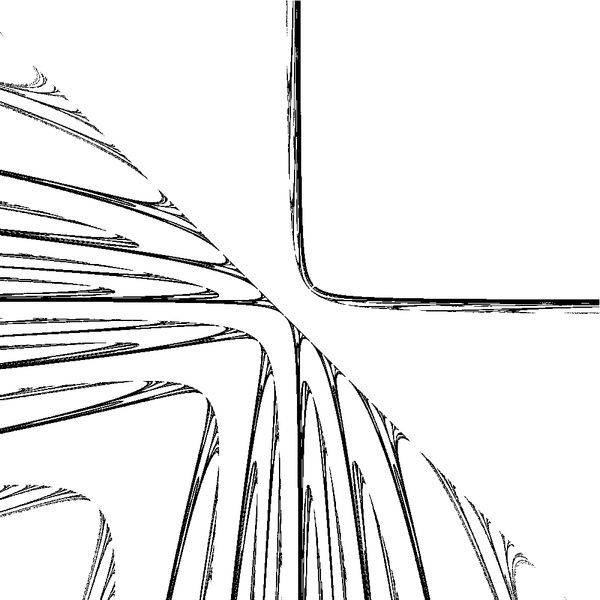}&\includegraphics[width=3cm]{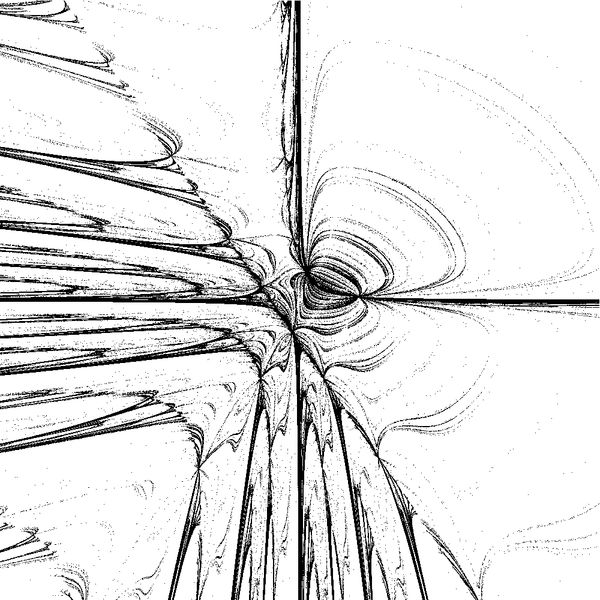}\\
    \includegraphics[width=3cm]{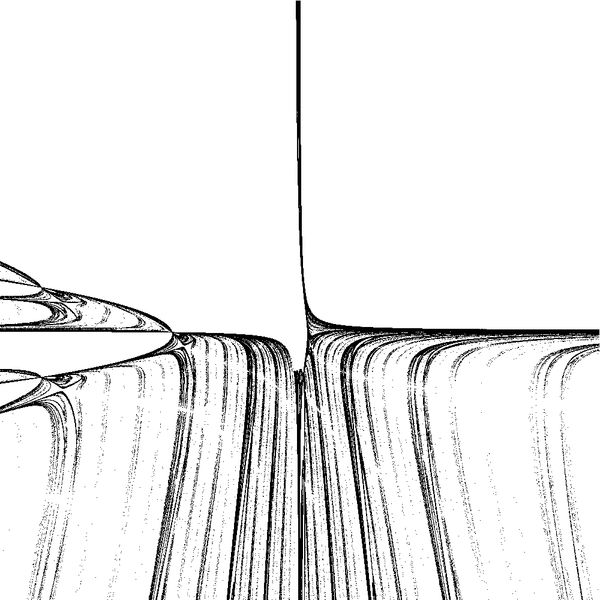}&\includegraphics[width=3cm]{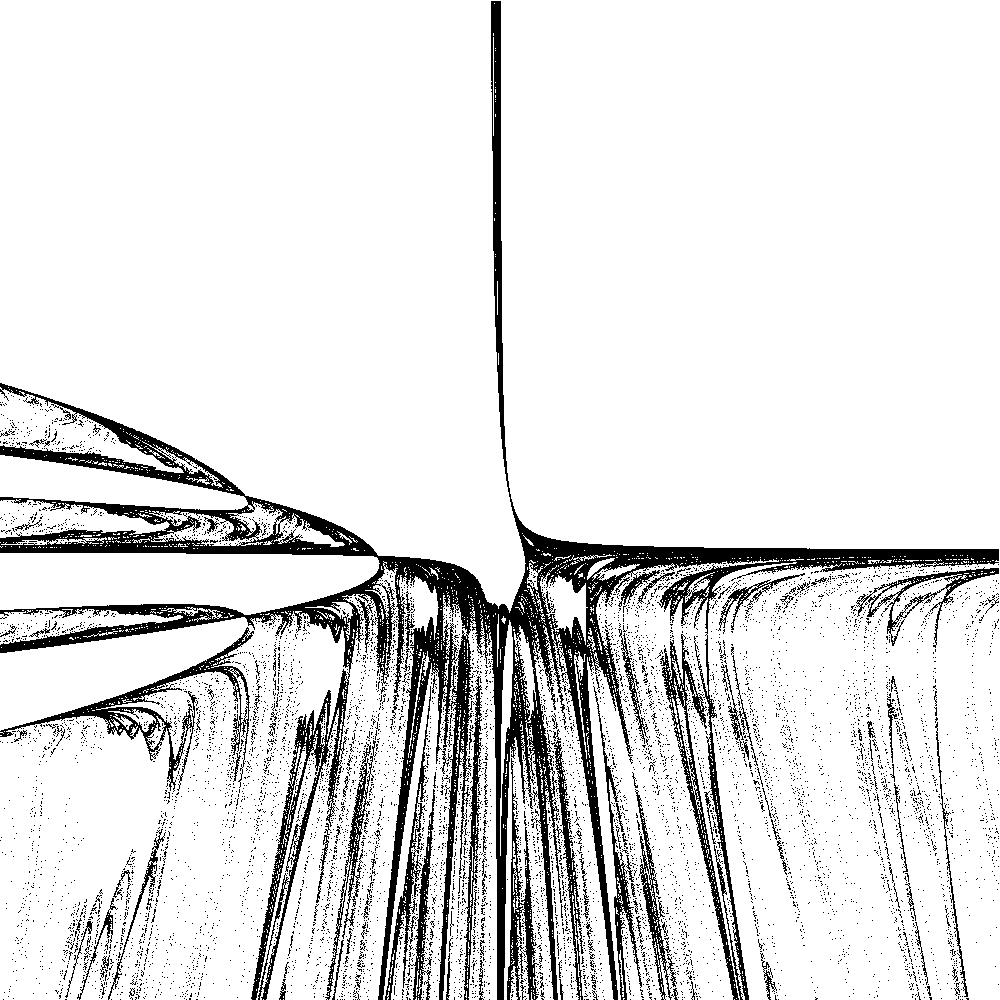}&\includegraphics[width=3cm]{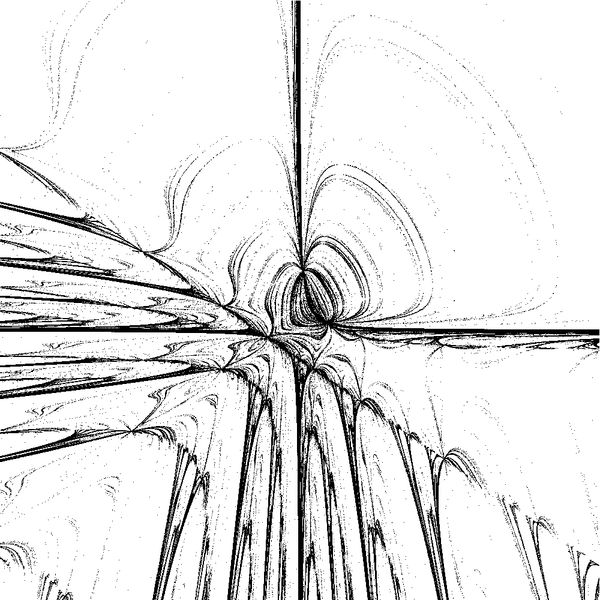}&\includegraphics[width=3cm]{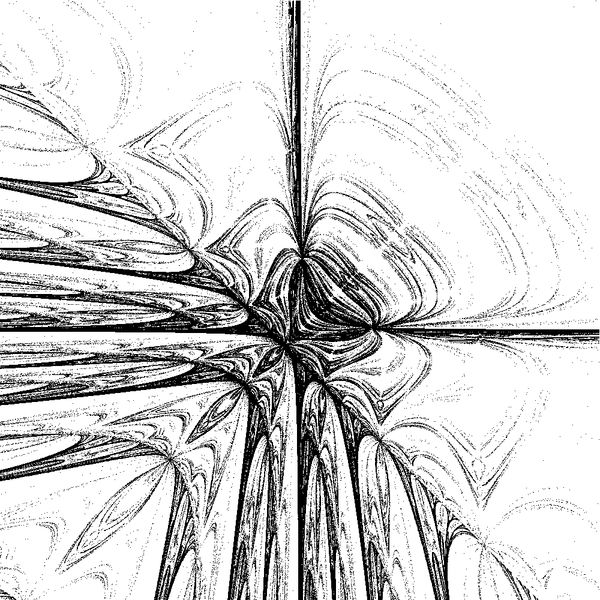}\\
    \includegraphics[width=3cm]{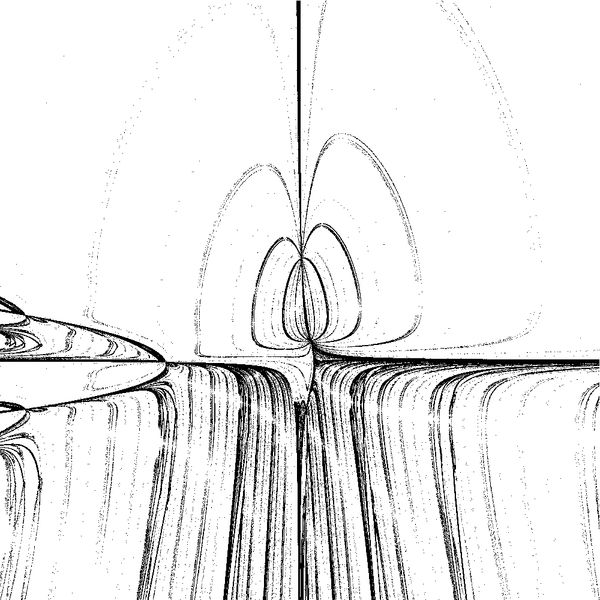}&\includegraphics[width=3cm]{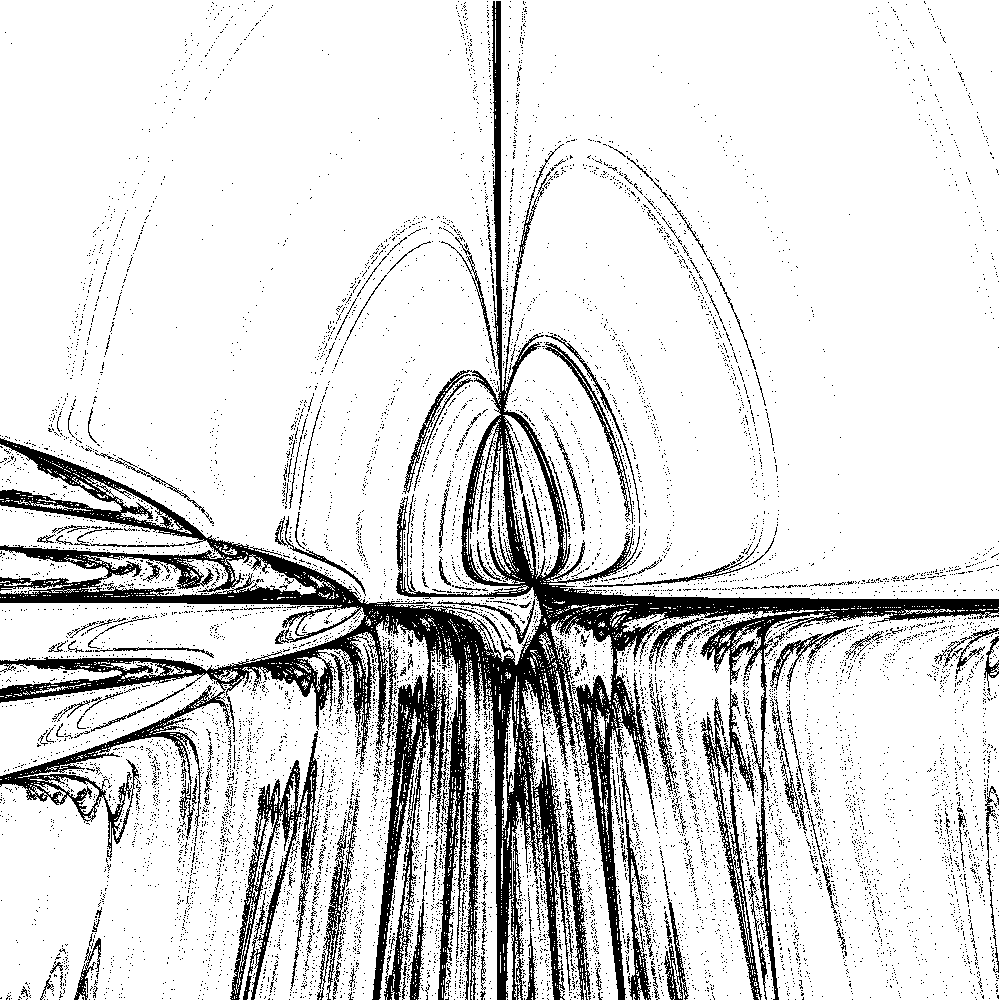}&\includegraphics[width=3cm]{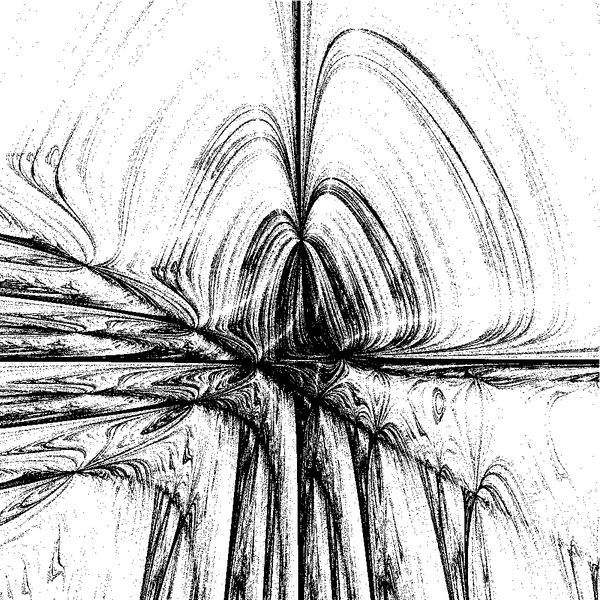}&\includegraphics[width=3cm]{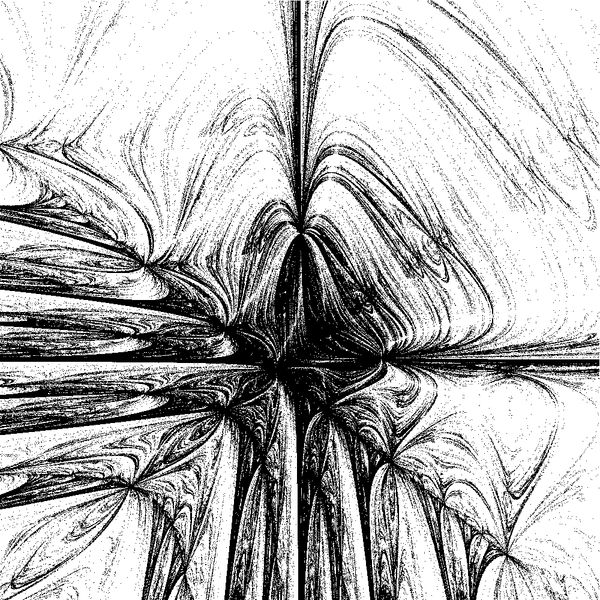}\\
  \end{tabular}
  \caption{%
    \em
    $\alpha$-limit of a generic point under  the parabolic Newton maps $\fN_{x_0,y_0}$ corresponding to
    the values $x_0=-2,-1,0,1$ and $y_0=-2,-1,0,1,2$. All these sets are also visible in the previous figure
    as the non-regular points of the basins boundaries. 
  }
  \label{fig:qq2ii}
\end{figure}
\subsection{A panorama of maps $\fN_{x_0,y_0}$}
In Fig.~\ref{fig:qq2} and~\ref{fig:qq2ii} we show, respectively, the basins of attraction and a $\alpha$-limit
of the Newton maps of twenty parabolic maps $\ff_{x_0,y_0}$ with $x_0=-2,-2,0,1$ and $y_0=-2,-1,0,1,2$
to support our conjectures~\ref{conj:alpha} and~\ref{conj:J} under several different situations.

The maps $\ff_{-2,2}$ and $\ff_{-1,1}$ have both four roots and the unions of the four relative basins of attraction
of the corresponding Newton maps appear to be full-measure. Similarly, although it has only two roots, it happens
for the map $\ff_{0,0}$ already discussed at length in Section~\ref{sec:f00}. Their $\alpha$-limits shown
in Fig.~\ref{fig:qq2ii} suggest that regular points of the Julia set are not reached and that these limit sets
coincide with the closure of the set of counterimages, under their Newton maps, of the set where these
Newton maps have a degenerate Jacobian. Note, though, that each point of the Julia set of the first two
Newton maps is a boundary point between basins of attraction corresponding to different roots while, for
$x=y=0$, almost all points of the Julia set have a neighborhood containing the basin of a single root.

For all other maps in Fig.~\ref{fig:qq2} with $y\geq-1$ and $x\leq y$, $\ff_{x,y}$ has only two real roots but
three attractors arise: two of them are the basins of attraction of the two real roots while the third one is some
subset on the corresponding ghost line. Numerics suggest that the dynamics on the basin of this third attractor
is chaotic, namely it is a subset of the Julia set rather than of the Fatou set like the other two basins. In particular,
this means that all corresponding Julia sets have non-empty interior and non-zero measure. The corresponding
$\alpha$-limits shown in Fig.~\ref{fig:qq2ii} suggest that the interior points of Julia sets cannot be reached.

The remaining maps $\ff_{x,y}$ in Fig.~\ref{fig:qq2} have no real roots and therefore the corresponding Newton
maps have two ghost lines. For $x=y=-2$, on each of these two lines lie an attractor, corresponding to the two basins
that are visible in the corresponding picture. Note that, in this case, the Julia set is the whole $\RPt$ since
on both basins the dynamics appears to be chaotic. Nevertheless, the corresponding  $\alpha$-limit seems to
be the set of boundary points between the two basins. In case of the maps corresponding to pairs $(x,y)$ with
$y=x-1$, only one attractor is visible, the one corresponding to the ghost line associated to the first pair of complex
solutions that disappears when $y$ increases. A trace of the $\alpha$-limit shown in Fig.~\ref{fig:qq2ii} can be
seen also as the darker area, namely the points that converge more slowly, in the picture showing the $\omega$-limits,
suggesting again that this $\alpha$-limit is the closure of the set of the counterimages of the set of degeneracy
of the Newton maps Jacobian. Finally, the remaining pictures show, again, a single basin of attraction. This time,
though, the attractor is not a subset of either one of the ghost lines -- the orbits of generic points seem rather to fill
up some two-dimensional region. Just like in the previous case, a trace of the $\alpha$-limit shown in Fig.~\ref{fig:qq2ii}
can be seen as the darker area in the basin of attraction. 

%
%
\begin{figure}
  \centering
    \includegraphics[width=13.5cm]{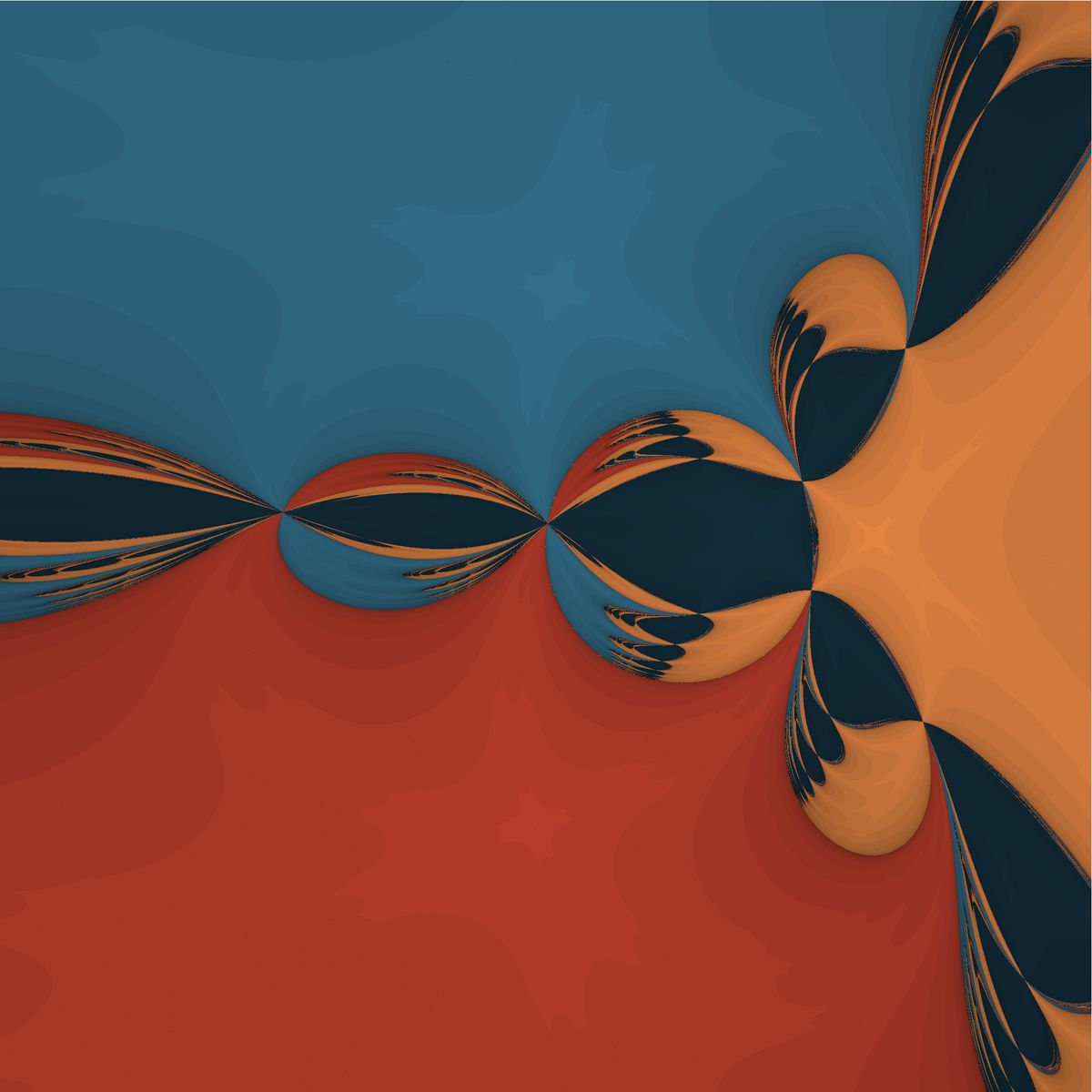}
  \caption{%
    \em
    Basins of attraction of $\fN_{5,0;1}$
    in the square $[-10,10]^2$.
    The four real roots of $\ff_{5,0;1}$ are the points $p_1\simeq(-0.20,-5.1)$, $p_2\simeq(0.21,4.7)$,
    $p_3\simeq(4.0,0.25)$ and $p_4\simeq(6.0,-0.17)$; the corresponding basins of attraction have been colored,
    respectively, in red, cyan, blue and mustard. Darker shades correspond to higher convergence time.
  }
  \label{fig:qq3c}
\end{figure}
\begin{figure}
  \centering
  \begin{tabular}{cc}                                                                                                                                                                  
    \includegraphics[width=6cm]{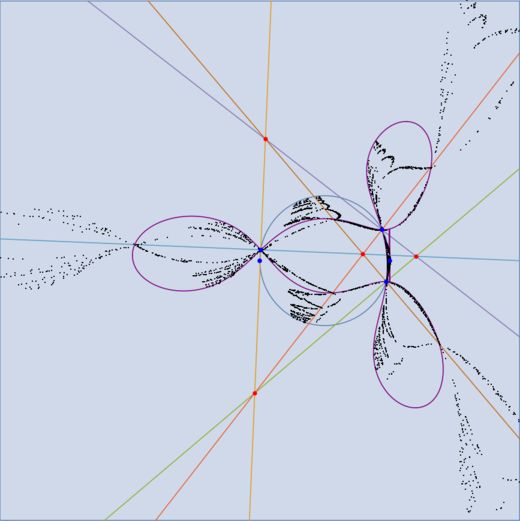}&\includegraphics[width=6cm]{qq3c}\\
    \includegraphics[width=6cm]{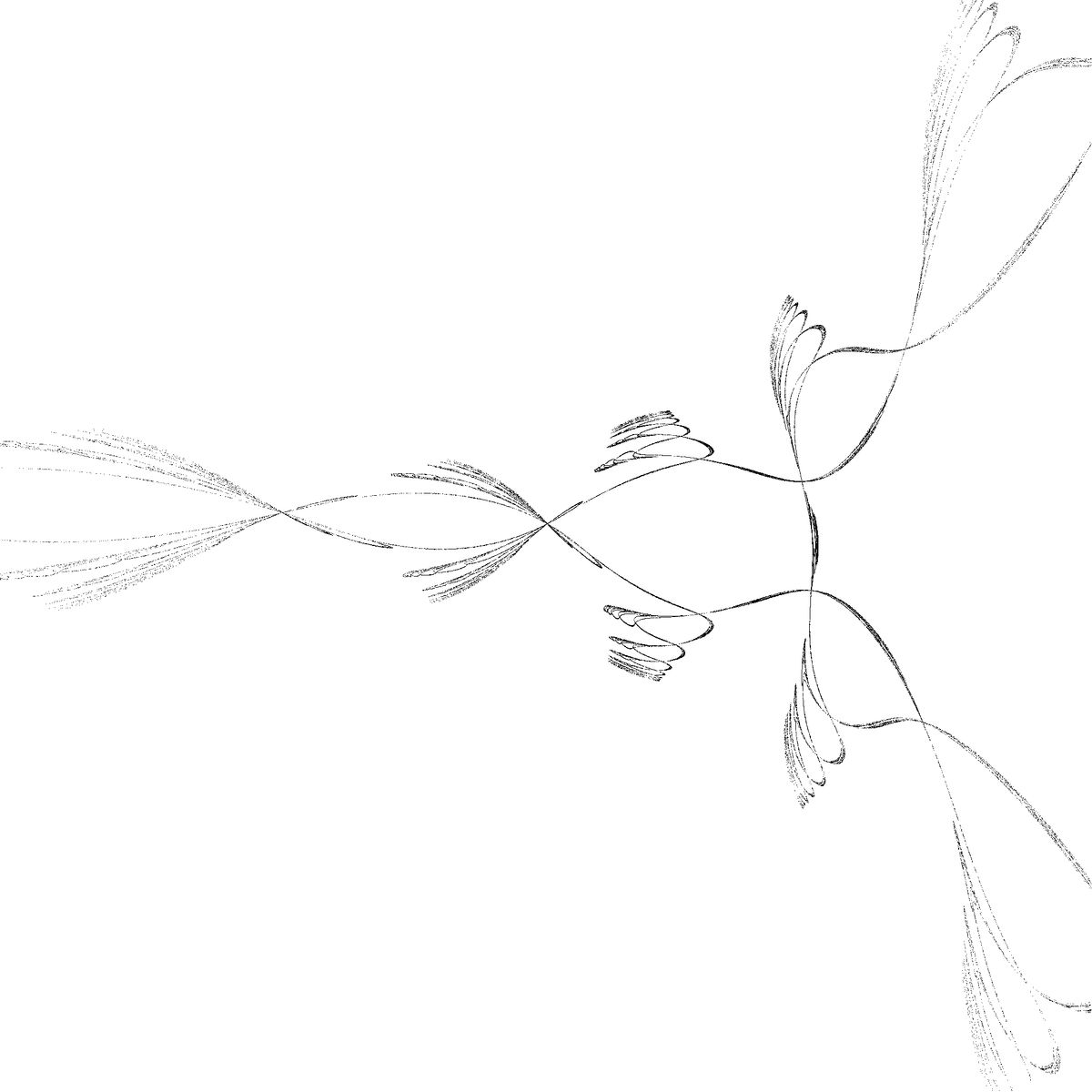}&\includegraphics[width=6cm]{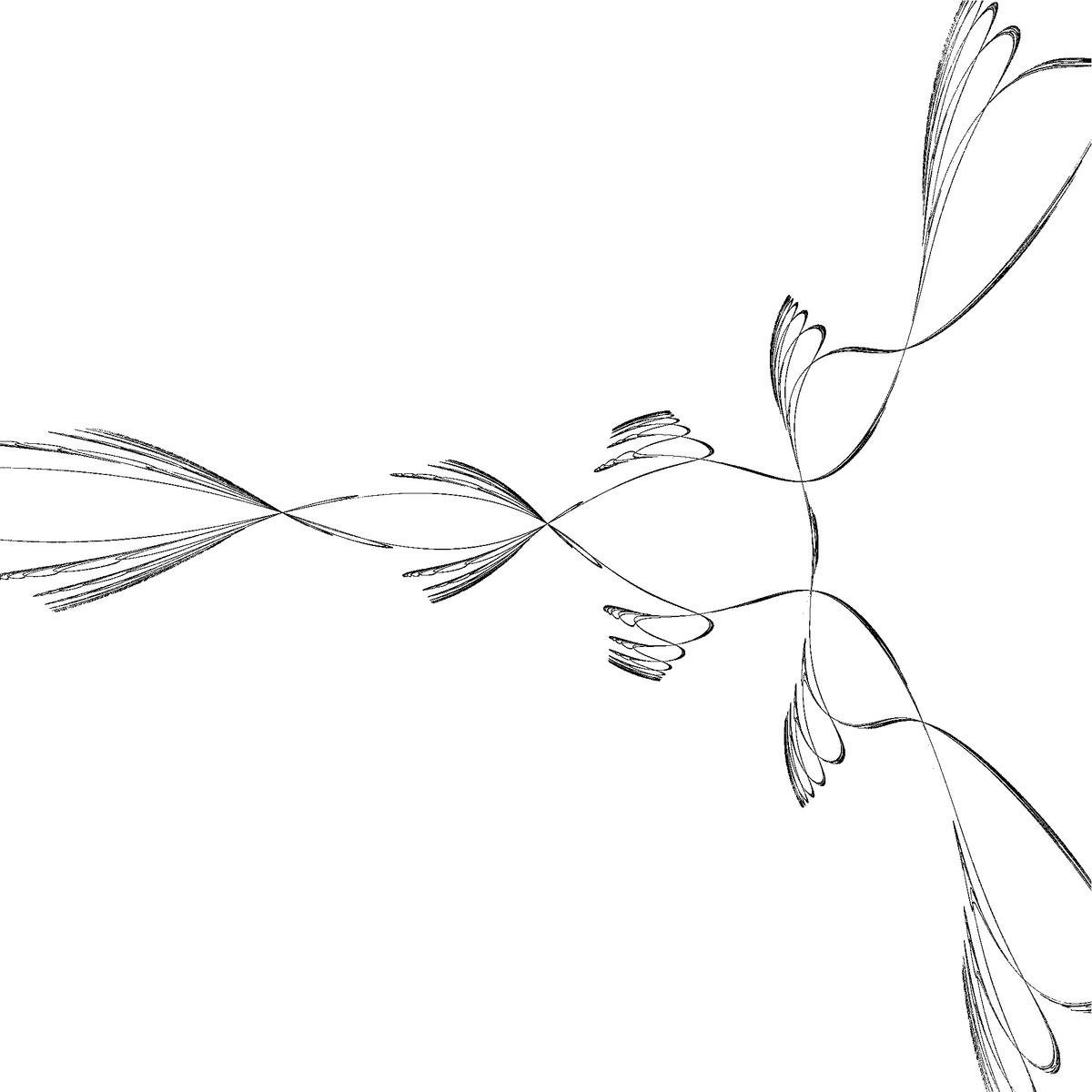}\\
  \end{tabular}
  \caption{%
    \em
    (Top, left) Some important elements of the dynamics of $\fN_{5,0;1}$ in the square $[-10,10]^2$:
    fixed points (red), indeterminacy points (blue), the six invariant lines, the set $\fZ_{5,0;1}$ (light blue)
    and its first counterimage (purple). The black points are the counterimages via $\fN_{5,0;1}$
    of a single point of the plane up to the 5th level of iteration.
    (Top, right) Basins of $\fN_{5,0;1}$ in the square $[-10,10]^2$.
    (Bottom, left) First $3\cdot10^5$ points of a random backward orbit of a random point
    under the two branches of $\fN_{5,0;1}^{-1}$.
    (Bottom, right) This picture shows the set $\fN_{5,0;1}^{-20}(-10,-3.6)$. These two bottom
    pictures suggest that through $\alpha$-limits we can only get the set of irregular points of the
    Julia set
  }
  \label{fig:qq3c1}
\end{figure}
\subsection{The map $\fN_{5,0;1}$}
%
The Newton map
$$
\fN_{-5,0;1} = \left(\frac{x^3  + x y^2 + 2 y - 24 x}{2 (x^2 + y^2 - 5x)},\frac{y^3 + x^2 y - 10 x y + 2 x + 24 y - 10}{2 (x^2 + y^2 - 5 x)}\right)
$$
has four attractive fixed points $p_1\simeq(-0.20,-5.1)$, $p_2\simeq(0.21,4.7)$, $p_3\simeq(4.0,0.25)$ and
$p_4\simeq(6.0,-0.17)$, corresponding to the four roots of $\ff_{5,0;1}$, and restricts to the identity on the circle
at infinity. The four bounded fixed points
are super-attractive while the ones at infinity have eigenvalue 1 in the direction of the circle at infinity and a repulsive
eigenvalue 2 in some transversal direction depending on the point. The five points of indeterminacy
of this map are all bounded: $(0,0)$, $(5,0)$ and, approximately, $(4.9,-0.81)$, $(4.7,1.2)$ and $(0.035,0.42)$.
%
\begin{figure}
  \centering
    \includegraphics[width=13.5cm]{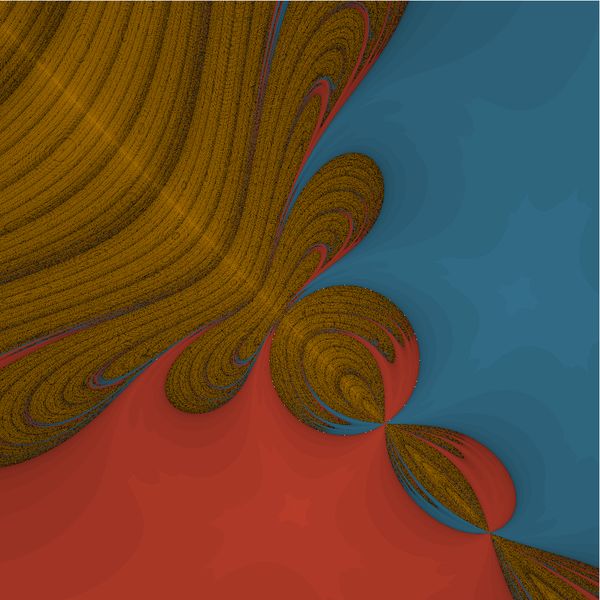}
  \caption{%
    \em
    Basins of attraction of $\fN_{3,-4;1}$
    in the square $[-10,10]^2$.
    The two real roots of $\ff_{3,-4;1}$ are the points $p_1\simeq(7.3,0.14)$ and $p_2\simeq(-0.14,-7.0)$,
    the corresponding basins are colored in cyan and red respectively. The basin of the third attractor, a chaotic
    invariant set lying on the ghost line, is colored in gold. Darker shades correspond to higher convergence time.
    The ghost line is visible in the picture as the set of brightest points of the chaotic attractor.
  }
  \label{fig:qq3h}
\end{figure}
\begin{figure}
  \centering
  \begin{tabular}{cc}                                                                                                                                                                  
    \includegraphics[width=6cm]{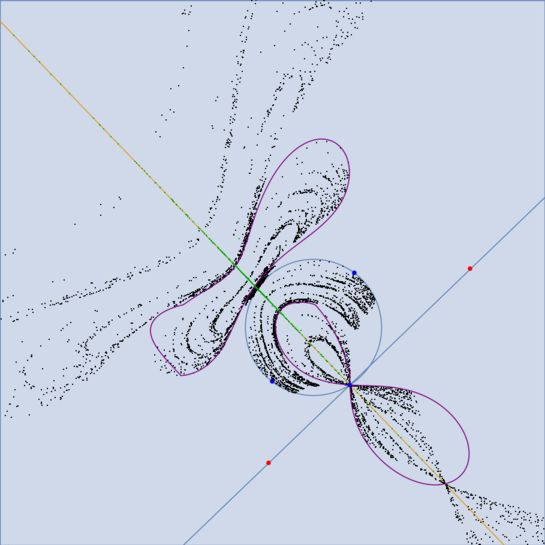}&\includegraphics[width=6cm]{qq3h}\\
    \includegraphics[width=6cm]{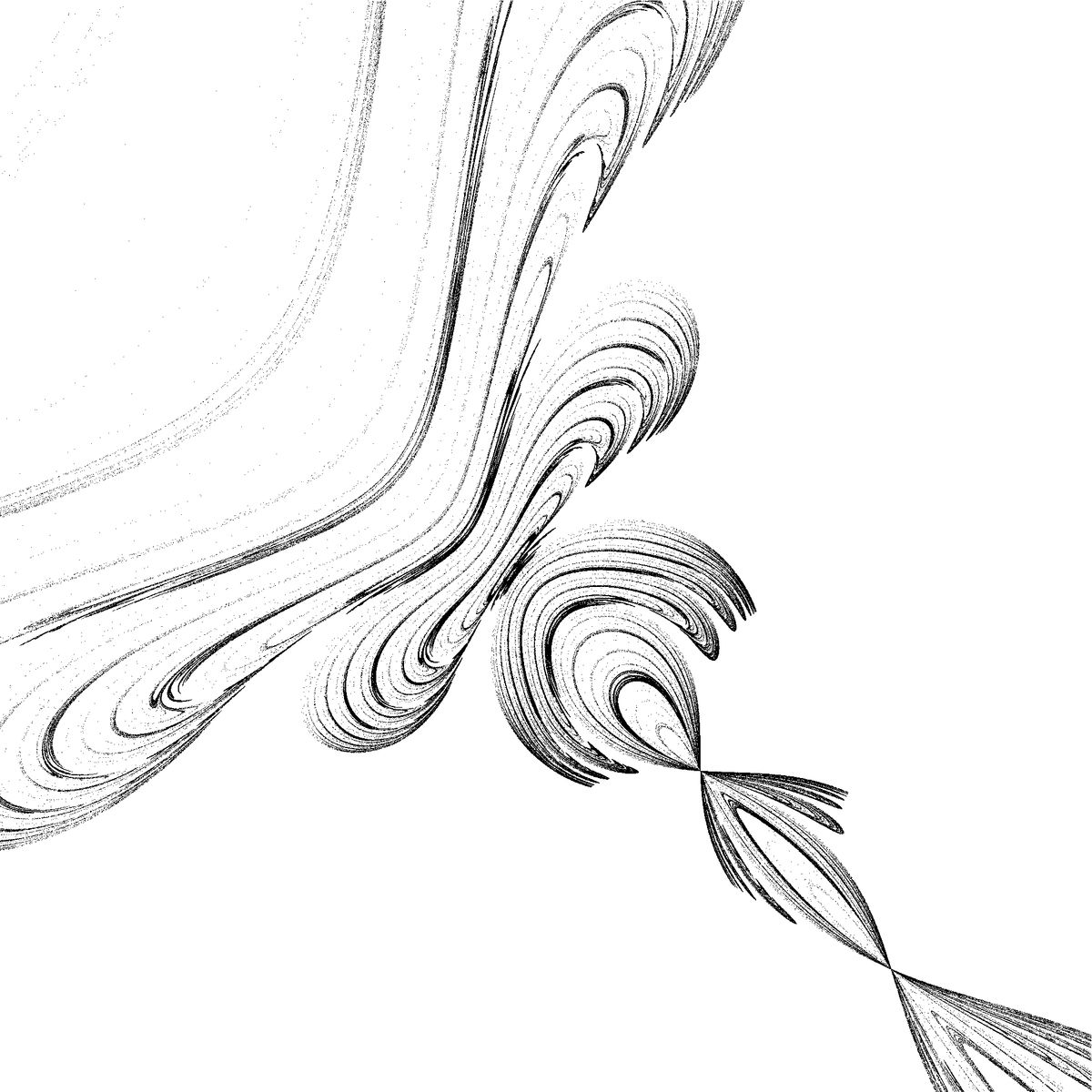}&\includegraphics[width=6cm]{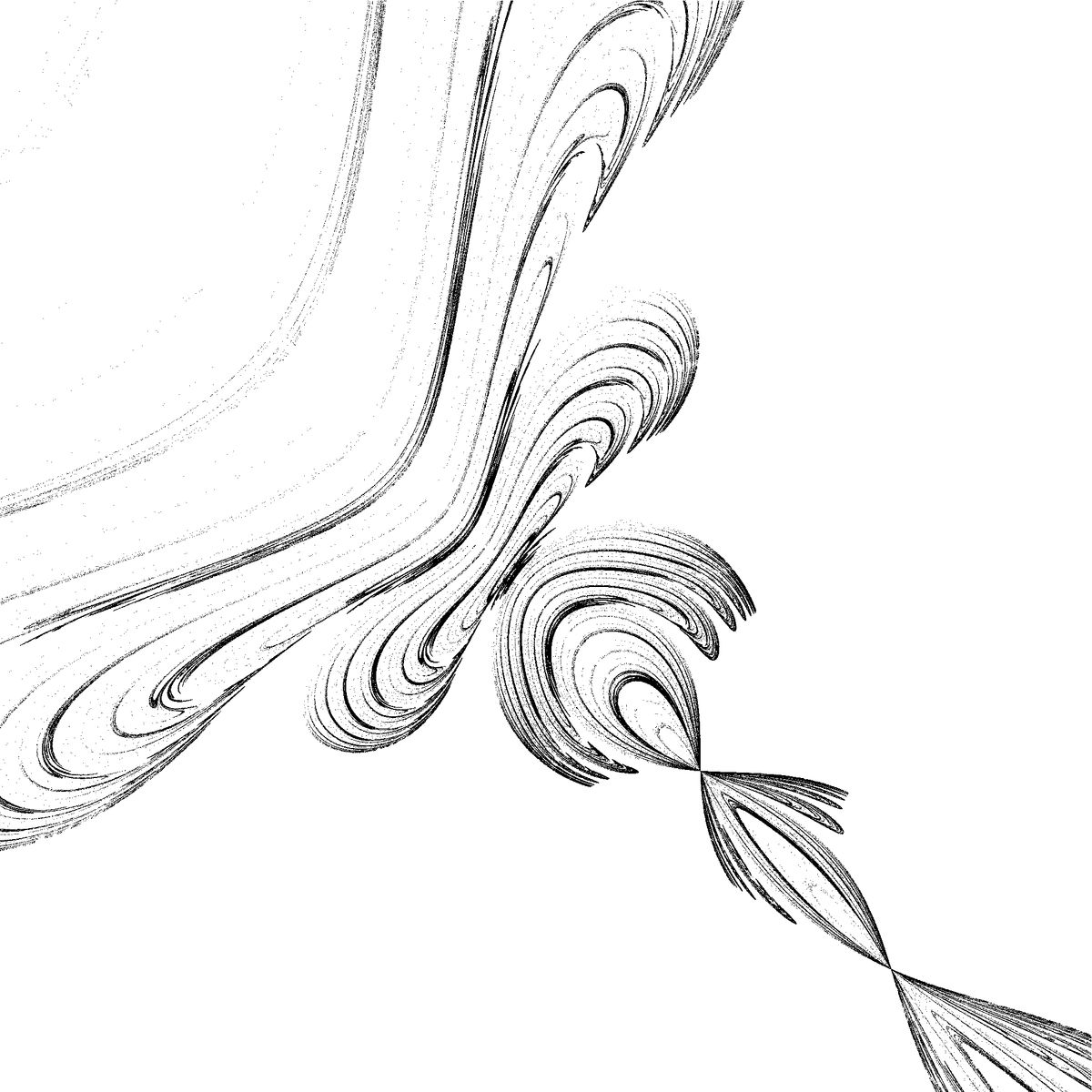}\\
  \end{tabular}
  \caption{%
    \em
     (Top, left) Main elements in the dynamics of $\fN_{3,-4;1}$: fixed points (red), point of indeterminacy (blue),
    invariant line joining the two roots (light blue) and ghost line (orange), the set $\fZ_{3,-4;1}$ (light blue)
    with its first counterimage (purple). The green and black points are, respectively, the first 500 points of
    the forward orbit of a random point in the basin of the chaotic attractor and of the backward orbit of
    a generic point on the plane. 
    (Top, right) Basins of attraction of $\fN_{3,-4;1}$ in the square $[-10,10]^2$.
    (Bottom, left) First $3\cdot10^5$ points of a random backward orbit of a point below the light
    blue invariant line under the two branches of $\fN_{3,-4;1}^{-1}$ (in black).
    (Bottom, right) This picture shows the set $\fN_{3,-4;1}^{-20}(-10,-3.6)$. These two bottom
    pictures suggest that through $\alpha$-limits we can only get the set of irregular points of the
    Julia set.
  }
  \label{fig:qq3h1}
\end{figure}

Unlike the parabolic case, the map $\fN_{-5,0;1}$ is surjective and open, so that
{its Fatou and Julia sets are fully invariant, as in the complex case}.
The counterimages of a point $(x_0,y_0)$ are given by 
$$
w_\pm(x_0,y_0) = \left(x_0\pm\sqrt{S_+/2},y_0\mp \sqrt{S_-/2}\right),
$$
where
$$
S_\pm = \sqrt{Q}\pm(24 - 10 x_0 + x_0^2 - y_0^2)
$$
and
$$
Q=4 + 96 x_0^2 - 40 x_0^3 - 8 x_0 y_0 + (-24 + 10 x_0 + x_0^2 + y_0^2)^2\,.
$$
Every point has exactly two distinct counterimages except for the fixed points, on which the two counterimages coincide.
In Fig~\ref{fig:qq3c1} (top, left), we show the fixed (red) and indeterminacy (blue) points, the six invariant lines joining the fixed points,
the circle of zeros of the denominator of $\fN_{-5,0;1}$ (in blue) and its first counterimage under $\fN_{-5,0;1}$ (in purple). The black
dots are the $2^{12}$ points of $N_h^{-12}(-10,-0.23)$ and show how the curves and points drawn fit with the Julia set.
Once again the picture suggests that $\fJ_{-5,0;1}$ is the alpha-limit of $\fZ_{-5,0;1}$.

Now, let $U$ be a small enough neighborhood of the four fixed points and $X=\RPt\setminus U$ . Then
$\fN_{-5,0;1}(X)\supset X$ and so the set $\lim_{n\to\infty}\fN_{-5,0;1}^{-n}(X)$  is the compact non-empty repellor of all points
that do not leave $X$ under forward iterations of $\fN_{-5,0;1}$. Just taking the $\alpha$-limit of a single point
suggests that this limit is the set of irregular points of $\fJ_{-5,0;1}$ for any non-fixed point of $\fN_{-5,0;1}$ (see Fig.~\ref{fig:qq3c1}, bottom right).
Similarly, $w_\pm(X)\subset X$ and so the these two maps define a IFS on $X$. The numerical evaluation
of random orbits under $\cI$ (see Fig.~\ref{fig:qq3c1}, bottom left) suggests that the irregular points of the Julia set is the unique compact
invariant set of $\cI$. 

%
\subsection{The map $\fN_{3,-4;1}$}
The Newton map
$$
\fN_{3,-4;1} = \left(\frac{8 + 8 x + x^3 + 2 y + 8 x y + x y^2}{2 (-3 x + x^2 + 4 y + y^2)},
\frac{-6 + 2 x - 8 y - 6 x y + x^2 y + y^3}{2 (-3 x + x^2 + 4 y + y^2)}\right)
$$
has properties very similar to those of the parabolic map $\fN_{-1,2}$.

Like $\fN_{-1,2}$, also this map has only two attractive fixed points,
namely $p_1\simeq(7.3,0.14)$ and $p_2\simeq(-0.14,-7.0)$,
and, correspondingly, only two invariant straight lines and three points of indeterminacy,
all bounded:  $(0,-4)$, $(3,0)$ and, approximately, $(2.8,-4.1)$.
Like $\fN_{5,0;1}$, this map is surjective and open and every point except the fixed ones has two counterimages.
In Fig.~\ref{fig:qq3h1} we show all these elements plus the circle $\fZ_{3,-4;1}$ and its first counterimage under $\fN_{3,-4;1}$.

Just as in case of $\fN_{-1,2}$, numerics strongly suggests the presence of a chaotic attractor $K$ lying on the ghost line,
whose basin of attraction is shown in gold in Fig.~\ref{fig:qq3h}. The orbit of a point in $\cF(K)$ is shown in green in Fig.~\ref{fig:qq3h1}.
Like $\fN_{5,0;1}$, both random backward orbits (see Fig.~\ref{fig:qq3h1}, (bottom, left)) and the $\alpha$-limit of a generic point
(see Fig.~\ref{fig:qq3h1}, (bottom, right)) appear to converge to the set of non-regular points of the Julia set.
%
\begin{figure}
  \centering
  \begin{tabular}{ccc}                                                                                                                                                                  
    \includegraphics[width=4.25cm]{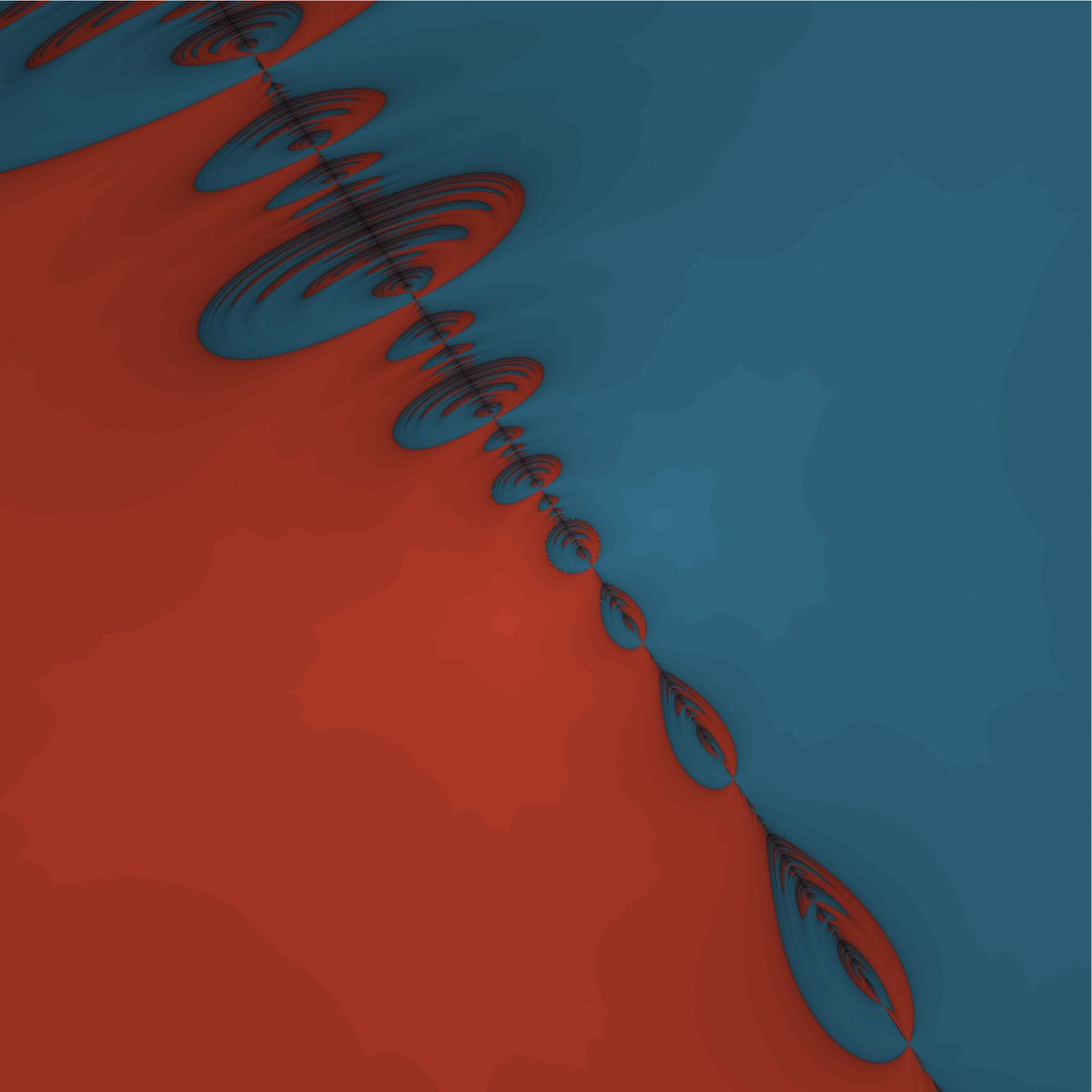}&\includegraphics[width=4.25cm]{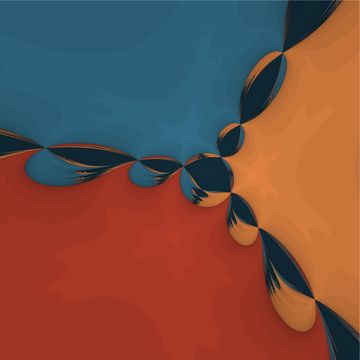}&\includegraphics[width=4.25cm]{qq3c}\\
    \includegraphics[width=4.25cm]{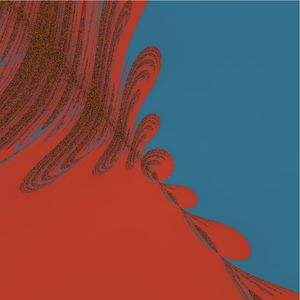}&\includegraphics[width=4.25cm]{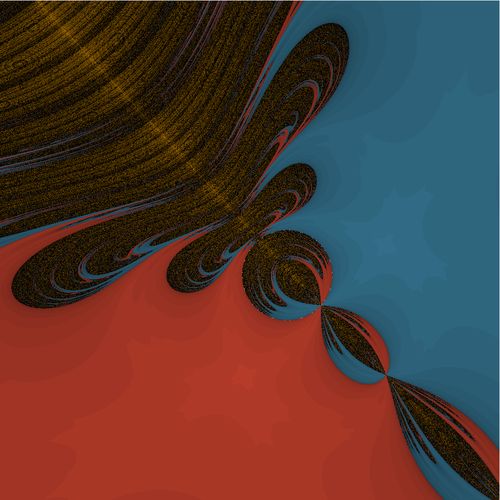}&\includegraphics[width=4.25cm]{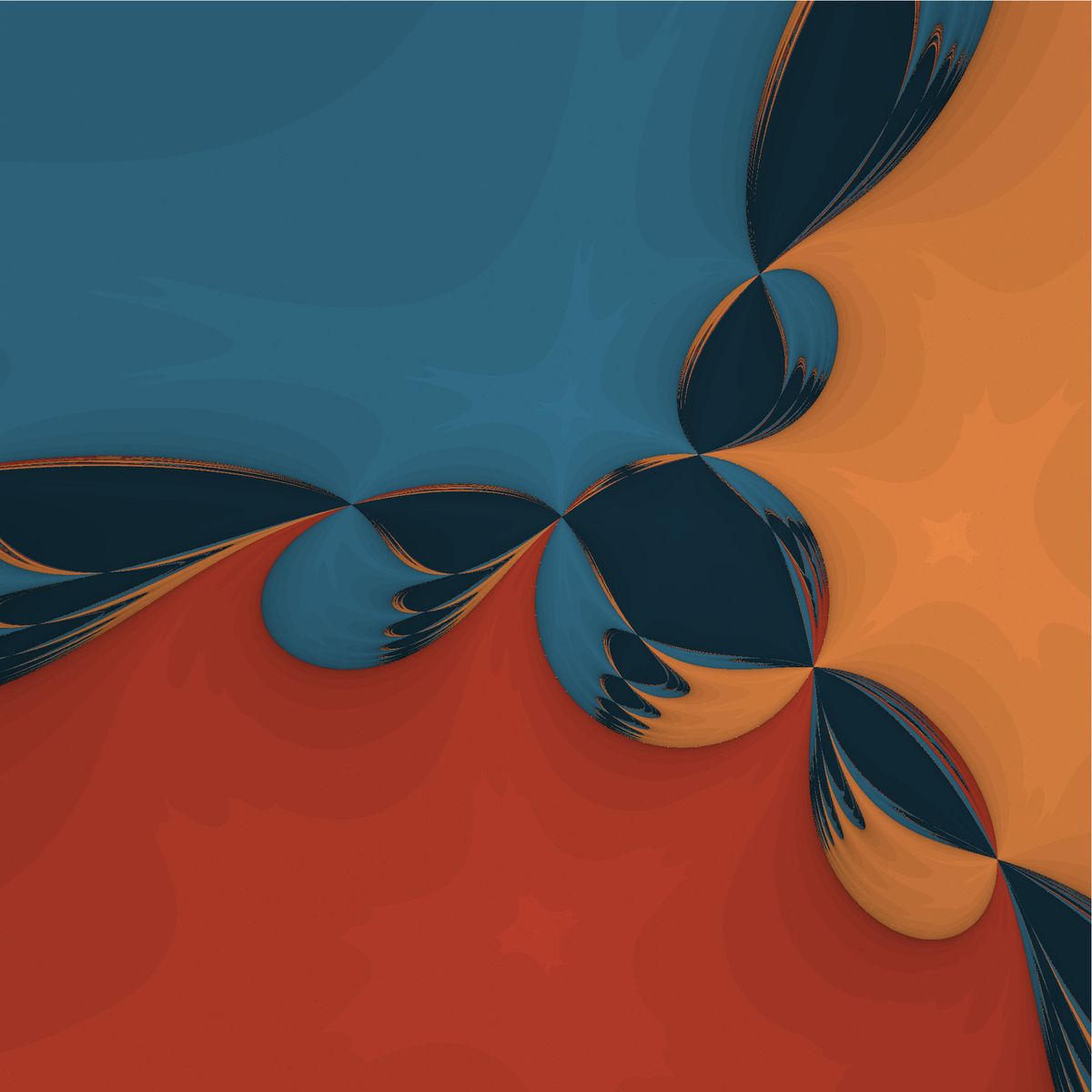}\\
    \includegraphics[width=4.25cm]{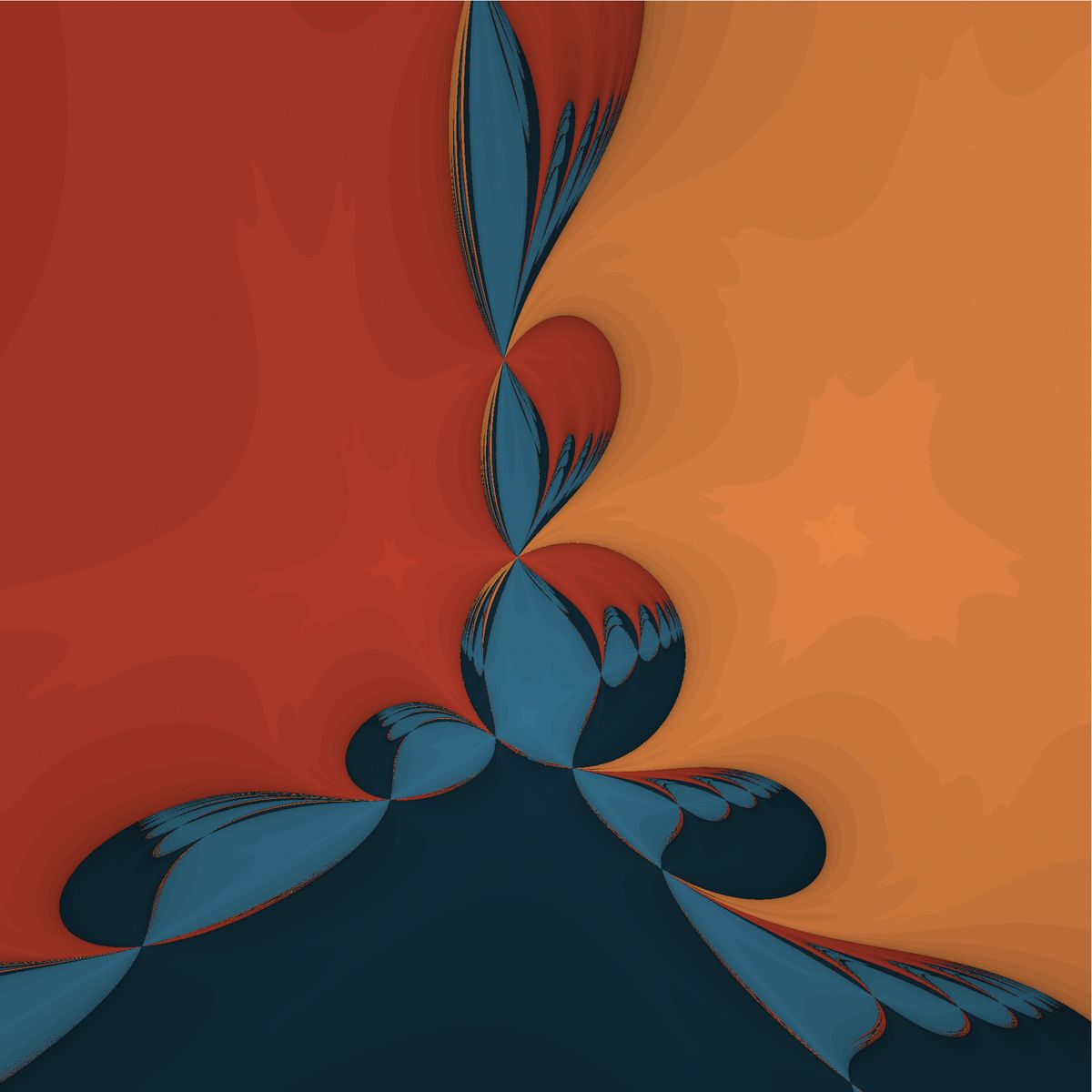}&\includegraphics[width=4.25cm]{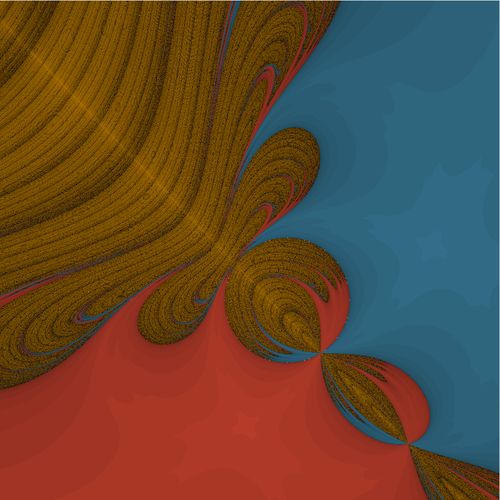}&\includegraphics[width=4.25cm]{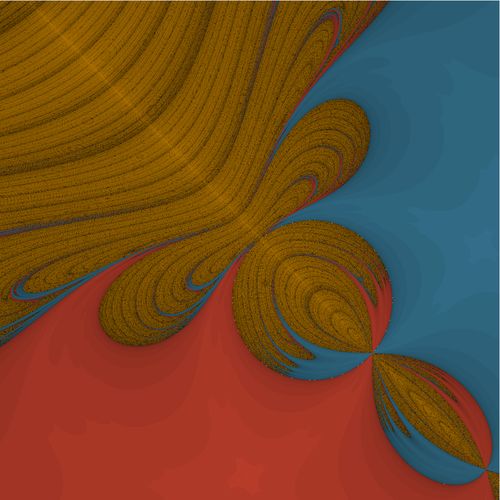}\\
  \end{tabular}
  \caption{%
    \em
    Basins of attraction for the hyperbolic Newton maps $\fN_{x_0,y_0;1}$ corresponding to the values $x_0=1,3,5$ and $y_0=-4,-2,0$.
    The numerical results strongly suggest that the union of the basins of attraction of the roots of the corresponding polynomials
    $\ff_{x_0,y_0;1}$ has full Lebesgue measure when the map has four roots and that a third, chaotic attractor can arise when the number
    of roots is non maximal. Darker shades correspond to higher convergence time.
  }
  \label{fig:qq3}
\end{figure}
\begin{figure}
  \centering
  \begin{tabular}{ccc}
    \includegraphics[width=4.25cm]{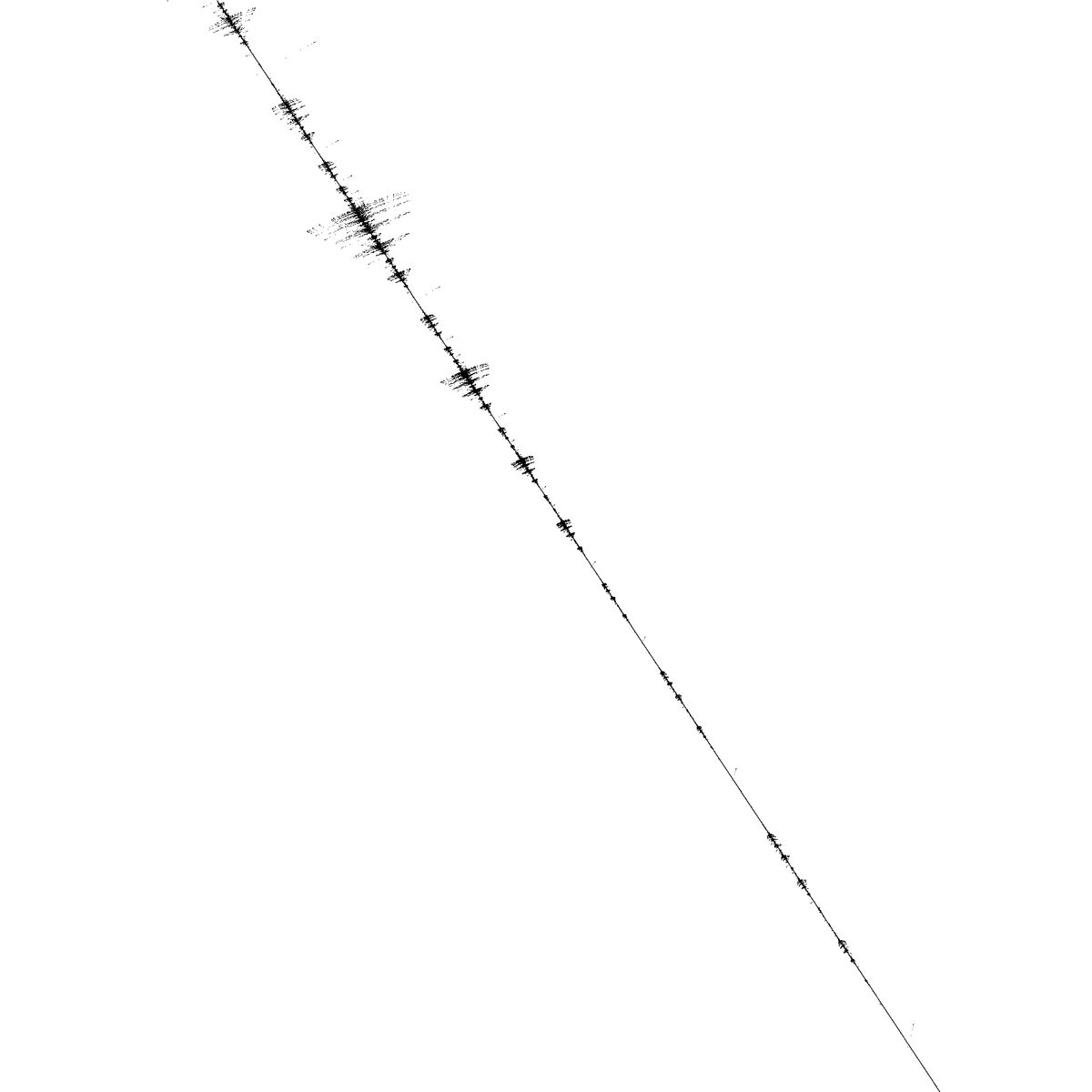}&\includegraphics[width=4.25cm]{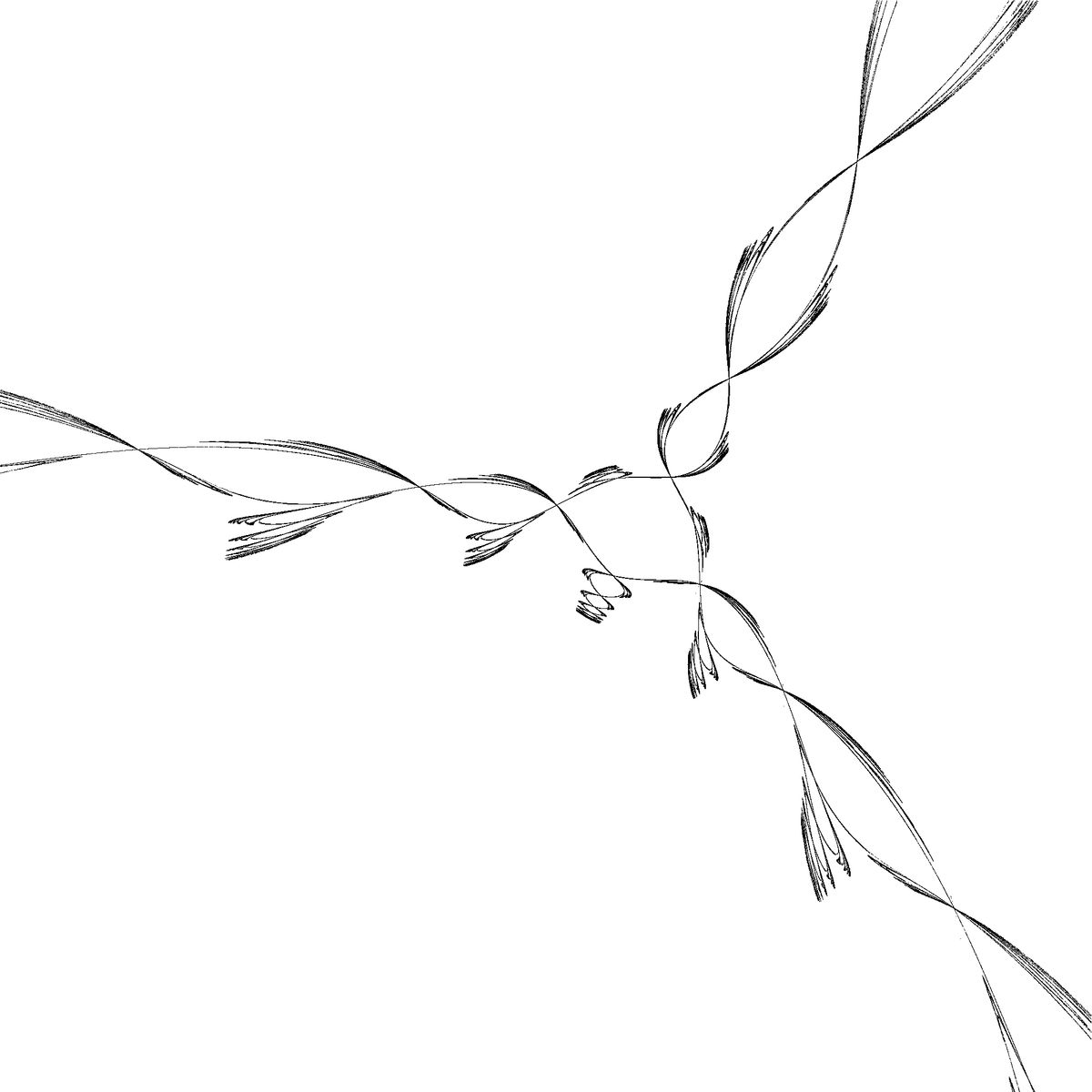}&\includegraphics[width=4.25cm]{ii-qq3_x5-y0}\\
    \includegraphics[width=4.25cm]{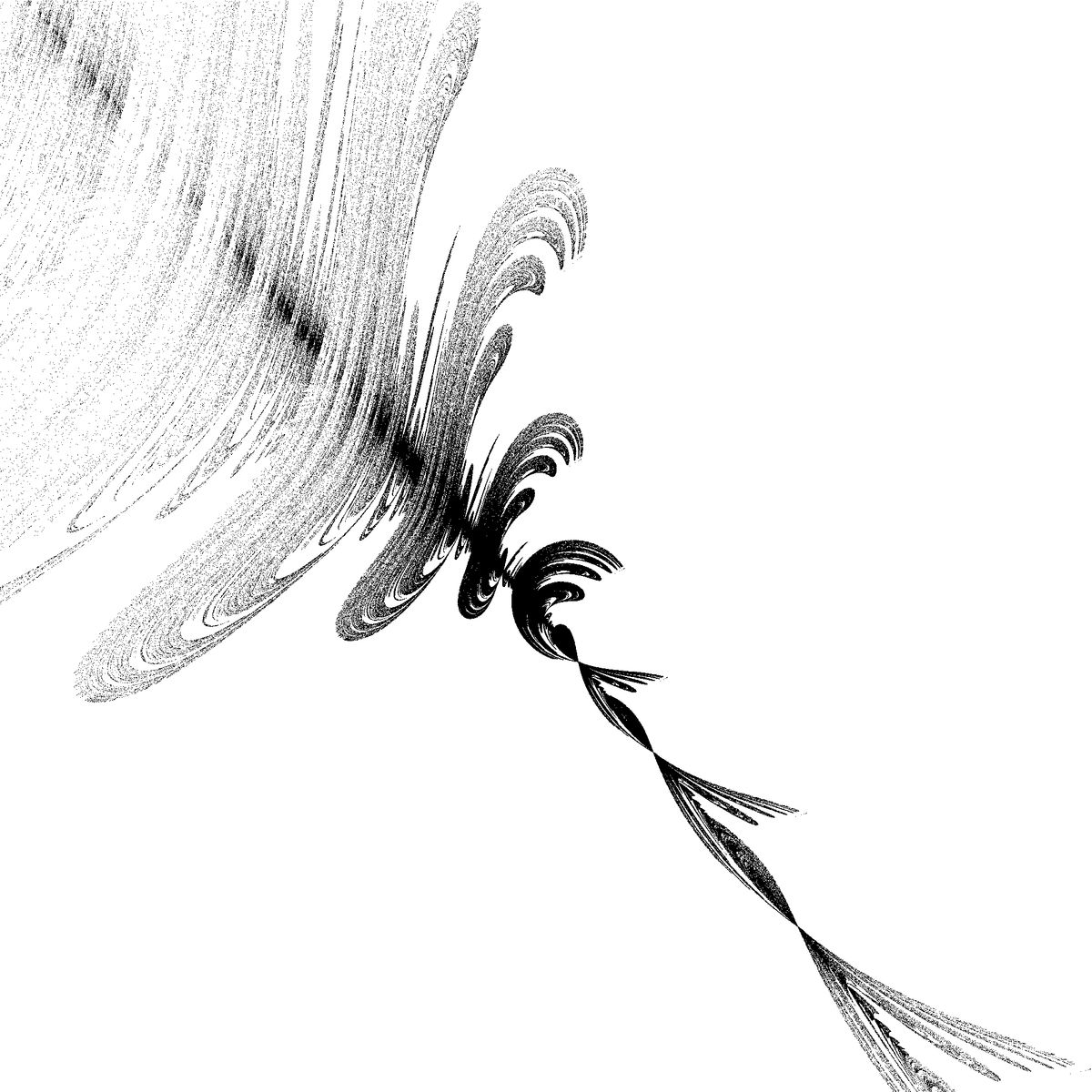}&\includegraphics[width=4.25cm]{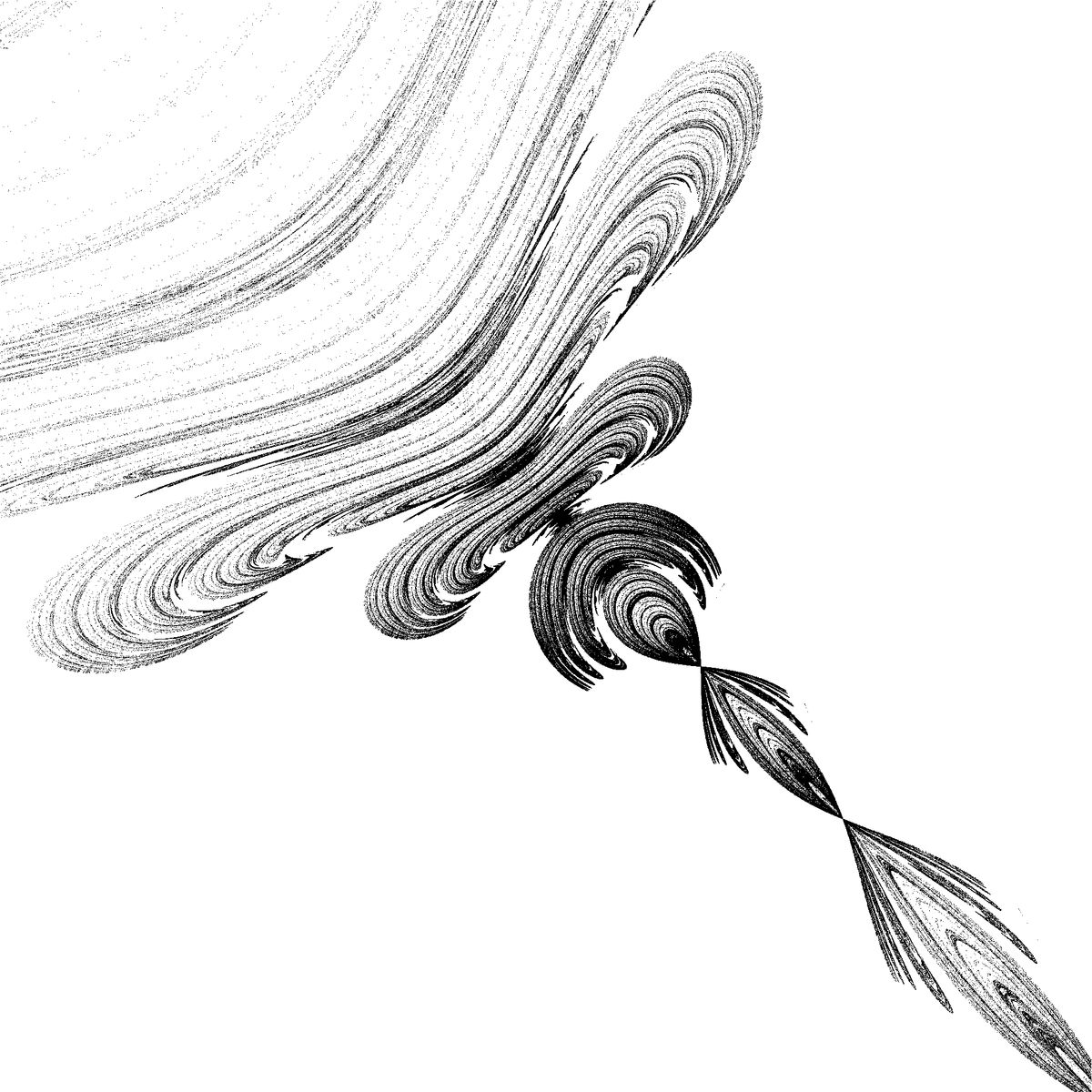}&\includegraphics[width=4.25cm]{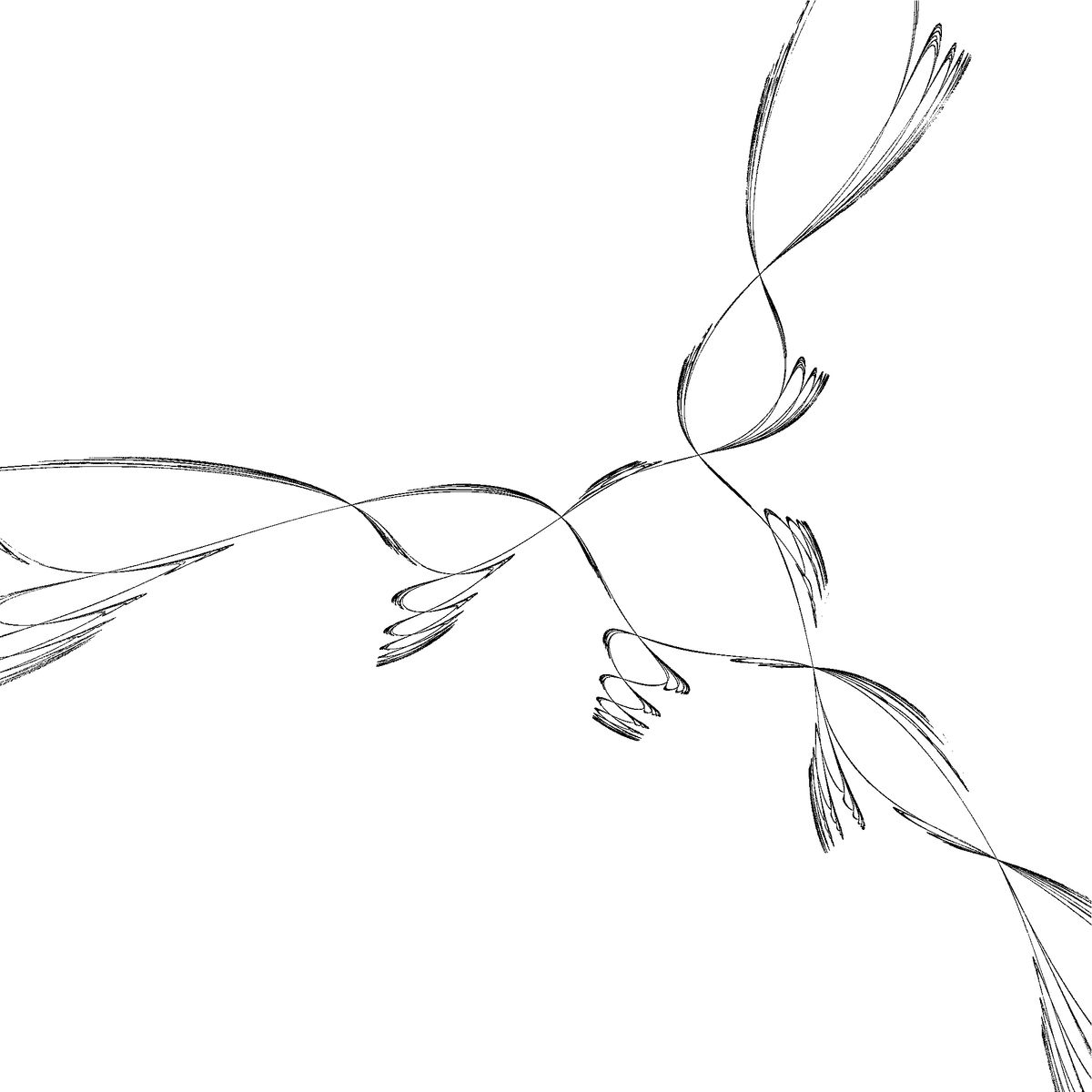}\\
    \includegraphics[width=4.25cm]{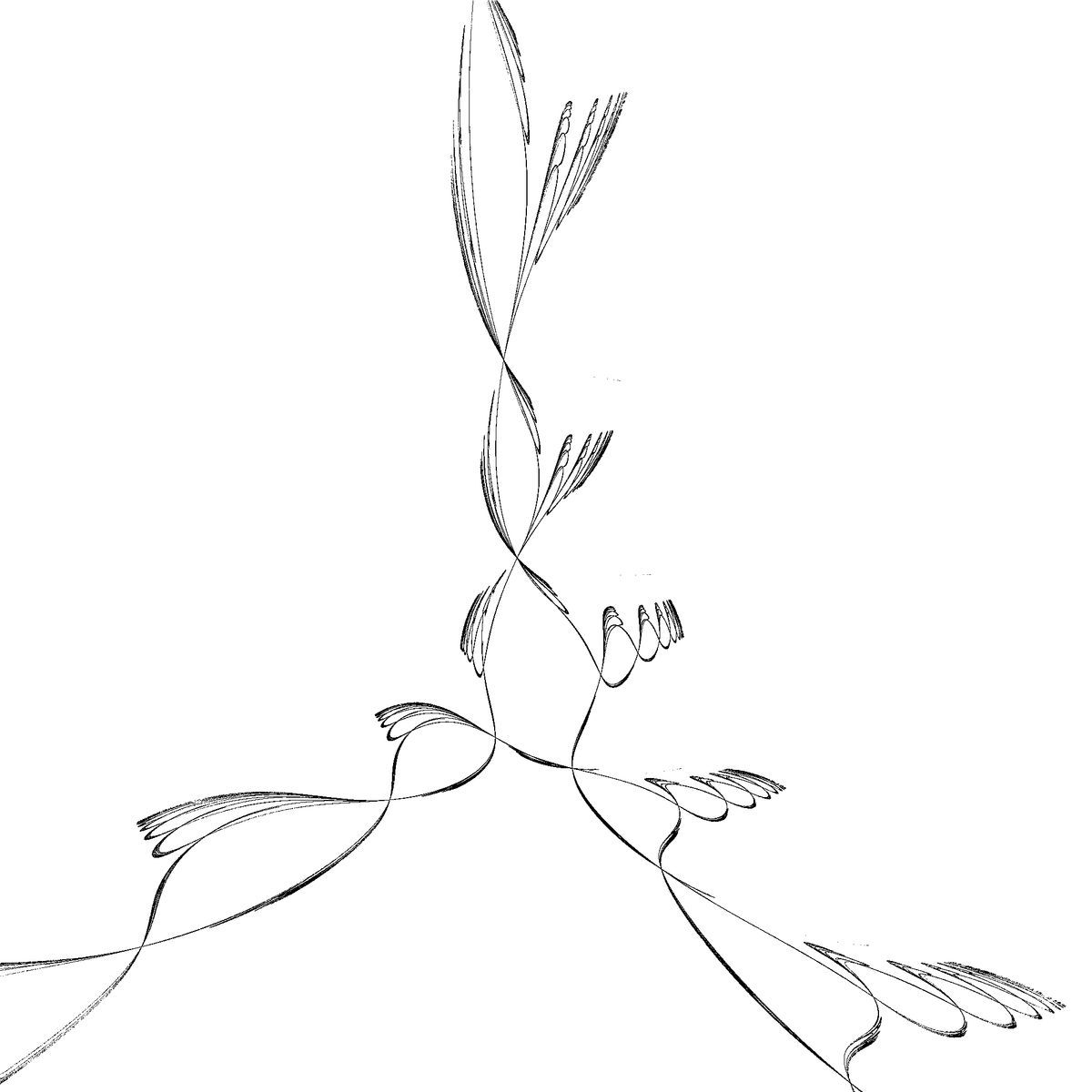}&\includegraphics[width=4.25cm]{ii-qq3_x3-y-4}&\includegraphics[width=4.25cm]{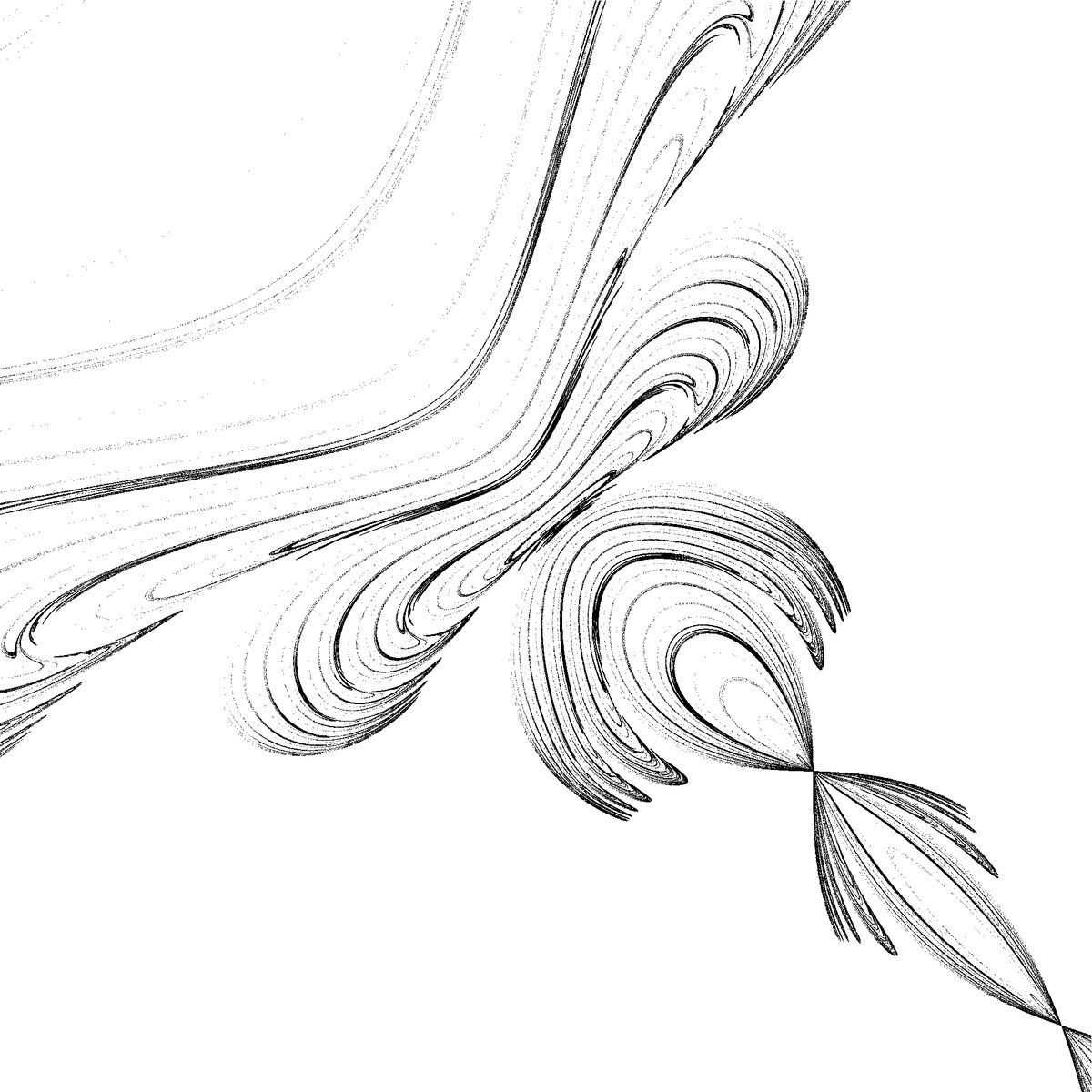}\\
  \end{tabular}
  \caption{%
    \em
    $\alpha$-limit of a generic point belonging to some suitable open subset of $\RPt$ under the parabolic Newton maps
    $\fN_{x_0,y_0;1}$  for the values $x_0=1,3,5$ and $y_0=-4,-2,0$. All this sets are also visible in the previous figure
    as the non-regular points of the basins boundaries.
  }
  \label{fig:qq3ii}
\end{figure}
\subsection{A panorama of maps $\fN_{x_0,y_0;a}$}
In Fig.~\ref{fig:qq3} and~\ref{fig:qq3ii} we show the basins of attraction and some $\alpha$-limits of
several hyperbolic Newton maps $\fN_{x_0,y_0;a}$. Similarly to what we found in the parabolic case, 
whenever $\ff_{x_0,y_0;a}$ has four roots, the union of the corresponding four basins appear to have full
Lebesgue measure in $\RPt$ while when only two roots appear (a generic map $\ff_{x_0,y_0;a}$ has at
least two roots) a third attractor often arises as an invariant subset of the map's ghost line.
While the data shown is relative only to the value $a=1$, we could not detect any particular new phenomenon
for different values of this parameter. The pictures in Fig.~\ref{fig:qq3ii} are fully consistent
with Conjecture~1 and suggest in a more evident way then their parabolic counterparts that regular
points of the Julia set cannot be reached by $\alpha$-limits.

%
\begin{figure}
  \centering
  \begin{tabular}{cc}                                                                                                                                                                  
    \includegraphics[width=6.3cm]{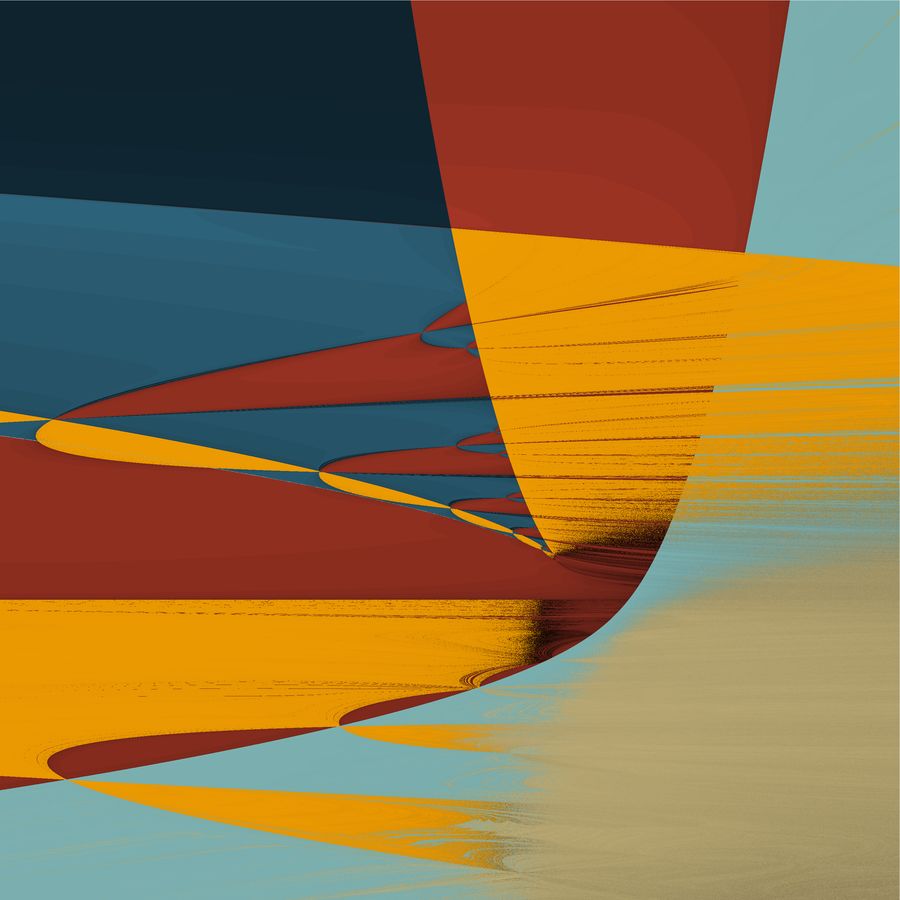}&\includegraphics[width=6.3cm]{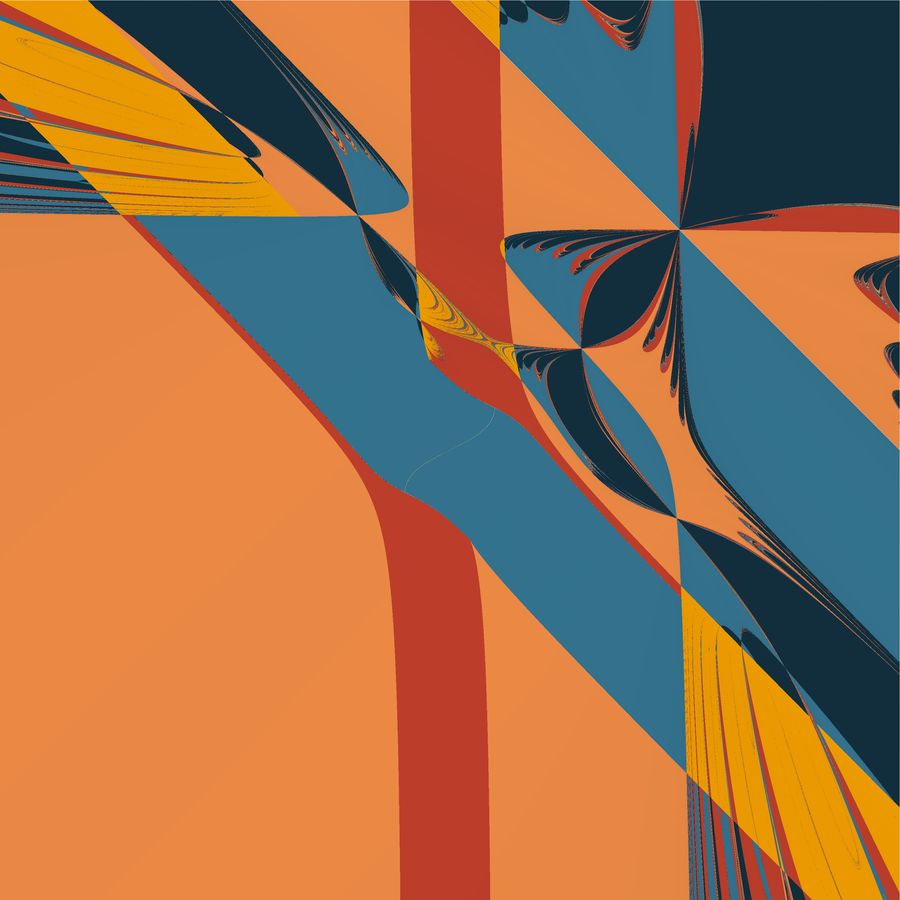}\\
    \includegraphics[width=6.3cm]{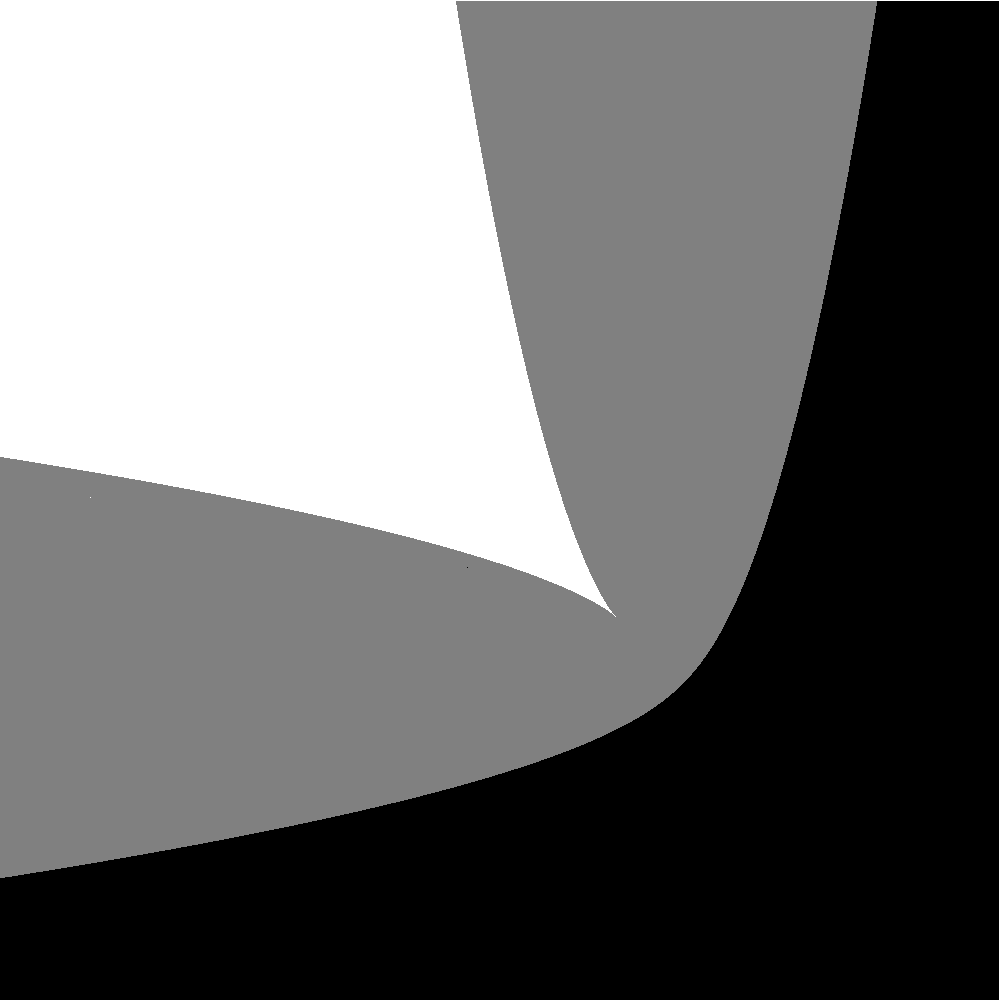}&\includegraphics[width=6.3cm]{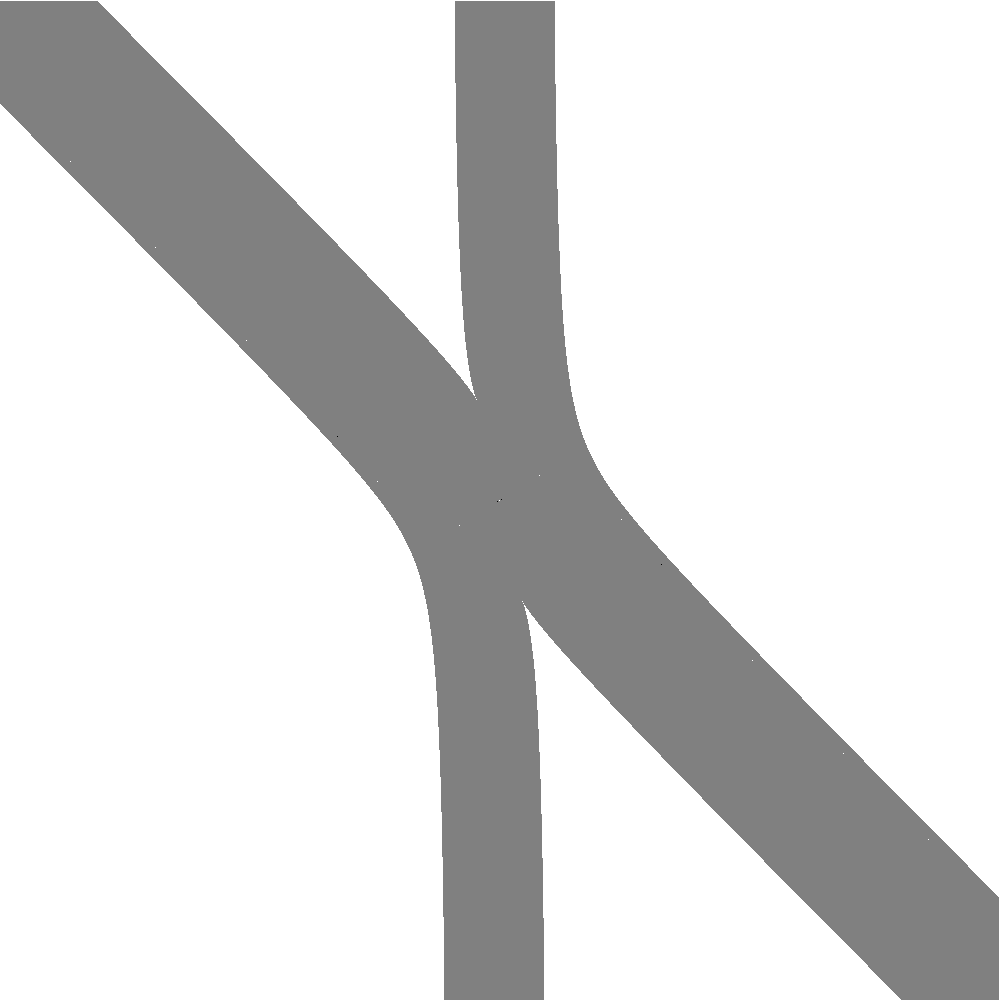}\\
  \end{tabular}
  \caption{%
    \em
    (top) Morphology in Parameter Space for the families of Newton maps $\fN_{x_0,y_0}$ and $\fN_{x_0,y_0;1}$
    in the squares $[-10,5]\times[-5,10]$ and $[-10,10]^2$ respectively. The initial point is $(0,6)$ for the
    parabolic maps and $(5,5)$ for the hyperbolic ones. (bottom) Number of solutions in the corresponding
    squares above: white means 4 real solutions, gray 2 real solutions and black no real solution. No attractor
    besides those of the attracting fixed points were detected in the white regions for the initial points above and
    several other choises of intial points, supporting Conjecture~2.
  }
  \label{fig:misc}
\end{figure}
\section{Morphology in parameter space}
In a celebrated article, Curry, Garnett and Sullivan~\cite{CGS83} found numerical evidence for the existence
of cubic holomorphic polynomials of degree 3 whose Newton map has attractive periodic cycles. The theoretical
basis of that article is an important general result of Fatou on rational holomorphic maps in one variable:
if such a map has an attracting periodic cycle, then one of its critical points must converge to it.

Newton maps of cubic holomorphic polynomials can be parametrized, modulo the equivalence described in
Proposition~\ref{prop:psi}, by 2 real parameters and only one of their critical point is not bound to converge
to any of the three bounded attractive fixed points, so checking the $\omega$-limit of this ``free'' critical point
for some large lattice of such maps is enough to detect any ``large enough'' open subset of them admitting
an attracting cycle. They called the map resulting from this study ``Morphology in Parameter Space'' (MPS). 

In the real case Fatou's result does not hold but analyzing the MPS is anyways interesting for us. For instance,
discovering some Newton map of a polynomial with maximal number of real roots having a basin of attraction
in addition to those corresponding to those roots would disprove our Conjecture~2. We explored numerically
the 2-parametric families $\fN_{x_0,y_0}$ and $\fN_{x_0,y_0;1}$ for several different initial points and found
results qualitatively equivalent to those shown in Fig.~\ref{fig:misc}. In both cases, with a striking precision,
we detected no extra basin in the region where the corresponding polynomial map has four real roots.
Whenever some pattern intersects that region, e.g. like the rocket-shaped pattern in Fig.~\ref{fig:misc} (left),
on the intersection the pattern is filled up only with the colors of the basins of attraction of the four roots,
giving reason to believe that Conjecture~2 is true for Newton maps of polynomials of type $(2,2)$.
Interestingly enough, notice that the main patterns observable in the basins of the maps $\fN_{x_0,y_0}$
and $\fN_{x_0,y_0;1}$ appear also in the corresponding MPS.

\section*{Acknowledgments}
The author gladly thanks J. Yorke for several discussions that greatly helped shaping this article and J. Hawkins for
several clarifications. All calculations were performed on the HPCC of the College of Arts and Sciences at Howard University
with Python and C++ code by the author.
\bibliography{refs}
\end{document}